%% file: main.tex
\pdfoutput=1
\documentclass[3p,longtitle]{elsarticle}

\usepackage[utf8]{inputenc}
\usepackage[T1]{fontenc}
\usepackage[english]{babel}
\usepackage{lmodern}
\usepackage{microtype}

\usepackage{amsmath,amssymb,amsthm}
\usepackage{mathtools}
\usepackage{lineno}

\usepackage{graphicx}
\usepackage{subcaption}
\usepackage{booktabs}
\usepackage{placeins}
\usepackage{comment}
\usepackage{xcolor}
\usepackage{tikz}
\usetikzlibrary{backgrounds,calc,patterns,positioning,decorations.pathreplacing}
\usepackage{pgfplots}
\pgfplotsset{compat=1.18}
\usepackage[colorlinks=true,linkcolor=blue,citecolor=blue,urlcolor=blue]{hyperref}
\usepackage[capitalise,nameinlink]{cleveref}

\definecolor{OliveGreen}{rgb}{0,0.49,0}
\definecolor{azure}{rgb}{0.0,0.5,1.0}
\definecolor{rust}{rgb}{0.75,0.15,0.05}
\definecolor{objgray}{rgb}{0.45,0.45,0.45}

\theoremstyle{plain}
\newtheorem{theorem}{Theorem}[section]
\newtheorem{lemma}[theorem]{Lemma}

\theoremstyle{definition}

\theoremstyle{remark}
\newtheorem{remark}[theorem]{Remark}

\numberwithin{equation}{section}

\newcommand{\R}{\mathbb{R}}
\newcommand{\Mh}{\mathcal{M}_h} %
\newcommand{\Mhat}{\widehat{\mathcal{M}}_h} %
\newcommand{\Mimp}{\mathcal{M}_h^{\mathrm{I}}} %
\newcommand{\Mexp}{\mathcal{M}_h^{\mathrm{E}}} %
\newcommand{\Mcut}{\mathcal{C}_h} %
\newcommand{\Mtr}{\mathcal{T}_h} %
\newcommand{\In}{I^n} %
\newcommand{\Fnum}{\mathcal{F}} %
\newcommand{\jump}[1]{[\![#1]\!]}
\newcommand{\avg}[1]{\{\!\!\{#1\}\!\!\}}
\newcommand{\norm}[1]{\left\lVert#1\right\rVert}
\newcommand{\dx}{\,\mathrm{d}\mathbf{x}}
\newcommand{\ds}{\,\mathrm{d}s}
\newcommand{\dt}{\,\mathrm{d}t}
\newcommand{\pc}{{\color{blue}p_h}} %
\newcommand{\qc}{{\color{orange}q_h}} %

\journal{Journal of Computational Physics}

\makeatletter
\renewcommand\elsparagraph{\@startsection{paragraph}{4}{0\z@}{10\p@ \@plus 6\p@ \@minus 3\p@}{-6\p@}{\normalfont\bfseries}}
\makeatother

\allowdisplaybreaks

\begin{document}

\begin{frontmatter}

\title{ADER-ZIST -- a mixed explicit ADER-DG and zonally implicit space-time DG approach for Cartesian cut cell meshes}

\author[UniTN,SUSTech]{M. Dumbser\corref{cor1}}
\ead{michael.dumbser@unitn.it}

\author[UniMS]{C. Engwer}
\ead{christian.engwer@uni-muenster.de}   

\author[TUB]{S. May}
\ead{may@tu-berlin.de}  
\cortext[cor1]{Corresponding author}

\author[TUB]{P. Munch}
\ead{p.muench@tu-berlin.de}  

\address[UniTN]{Laboratory of Applied Mathematics, DICAM, University of Trento, via Mesiano 77, 38123 Trento, Italy}

\address[SUSTech]{Department of Mathematics, Southern University of Science and Technology, \\ Xueyuan Avenue 1088, 518055 Shenzhen, Guangdong, China}

\address[UniMS]{Institute for Analysis and Numerics, University of M\"unster, M\"unster, Germany}

\address[TUB]{Institute of Mathematics, TU Berlin, Berlin, Germany}

\begin{abstract}
We present high order accurate mixed explicit-implicit discontinuous Galerkin (DG) schemes for the solution of hyperbolic systems of conservation laws on Cartesian cut cell meshes. Such meshes may contain arbitrarily small cut cells, which leads to the well-known small cell problem: the CFL condition of explicit schemes would severely restrict the admissible time step. To overcome this difficulty, we use high order explicit fully-discrete one-step ADER-DG schemes only on Cartesian cells away from the embedded boundaries. The zone close to the embedded boundary, which consists of cut cells and their direct face neighbors, is discretized with an implicit space-time DG scheme that is coupled to the explicit scheme directly via the numerical flux. The resulting scheme, called ADER-ZIST (ADER-DG with Zonally Implicit Space-Time DG), is conservative by construction, and its time step is determined only by the size of the Cartesian background cells. Curved embedded boundaries are represented by high order isoparametric sub-triangles inside the cut cells, and the nonlinear system of the implicit cells is solved via an inexact matrix-free Newton--Krylov method. For problems with shock waves, we add an artificial viscosity term whose strength is controlled by an oscillation-eliminating smoothness sensor. ADER-ZIST is assessed on a comprehensive set of test problems for the compressible Euler equations, ranging from smooth flows around straight and curved embedded boundaries to acoustic scattering and shock-dominated flows. The scheme essentially reaches its nominal order of accuracy $N+1$ for smooth solutions and for polynomial degrees up to $N=5$, on meshes with minimal volume fractions of the small cut cells down to the order of $10^{-7}$.
\end{abstract}

\begin{keyword}
Cartesian cut cell meshes;
small cell problem; 
explicit ADER discontinuous Galerkin (DG) schemes; 
implicit space-time DG schemes; 
explicit-implicit time integration; 
hyperbolic conservation laws
\end{keyword}

\end{frontmatter}

\section{Introduction}\label{sec:intro}

For flow problems in complex geometries, embedded boundary or cut cell meshes offer an
attractive alternative to body-fitted meshes: the computational mesh is obtained by
intersecting the domain with a Cartesian background mesh, a step that requires neither user
interaction nor substantial computational effort. Away from the embedded object, all cells
are regular Cartesian cells. Along its boundary, however, the intersection produces cut
cells of irregular shape, whose volume may be arbitrarily small compared to the volume of
the background cells. When time-dependent hyperbolic conservation laws are solved with
explicit time integration, which is the common choice for this class of problems, this
leads to what is known as the small cell problem \cite{Berger2017}: the step size permitted
by the CFL condition shrinks with the size of the smallest cell, and a time step adapted to
the background mesh renders the update on small cut cells unstable.

A natural remedy is to eliminate small cut cells from the mesh by combining them with
suitable neighbors, which is referred to as cell merging or cell agglomeration, see, e.g., 
\cite{BayyukPowellvanLeer1993,Quirk1994, QinKrivodonova2013,MuellerKraemerEisKummerOberlack2016,GulizziAlmgrenBell2022}. While
conceptually simple, this strategy shifts considerable complexity to the mesh generation,
in particular in three dimensions. Many authors have therefore retained the small cut cells
and modified the numerical scheme instead. In the finite volume (FV) context, examples
include flux redistribution \cite{ChernColella1987,ColellaGravesKeenModiano2006}, $h$-box
methods \cite{BergerHelzelLeVeque2003,BergerHelzel2012}, dimensionally split flux
stabilization \cite{KleinBatesNikiforakis2009,GokhaleNikiforakisKlein2018}, mixed explicit-implicit
time stepping \cite{MayBerger2017,MayLaakmann2024}, an active flux variant for cut
cells \cite{HelzelKerkmann2020}, and state redistribution (SRD) \cite{BergerGiuliani2021},
which has been developed further into weighted
\cite{GiulianiAlmgrenBellBergerHenrydeFrahanRangarajan2022} and provably stable
\cite{BergerGiuliani2024} versions.

Algorithmic approaches for discontinuous Galerkin (DG) schemes have emerged more recently. An early example is the
use of implicit Euler time stepping for a transport problem on a cut cell mesh
\cite{BastianEngwerFahlkeIppisch2011}, which avoids the stability issue at the price of
first-order accuracy in time. Several groups have since adapted the ghost penalty technique
\cite{Burman2010}, originally devised for elliptic problems within cut finite element
methods (for a comprehensive review of this field, we refer to
\cite{BurmanHansboLarsonZahedi2025}), to first-order hyperbolic equations
\cite{FuKreiss2021,FuFrachonKreissZahedi2022}; more recent work in this direction ensures
bound preservation in one space dimension \cite{FuKreissZahedi2024} and a maximum principle
for scalar problems in several space dimensions \cite{FuKreissXinZahedi2026}. The Domain of
Dependence (DoD) stabilization \cite{EngwerMayNuessingStreitbuerger2020} augments the DG
discretization by penalty terms that let information pass through small cut cells within
one time step, so that their outflow neighbors are updated with the correct domain of
dependence. Subsequent work generalized it to nonlinear systems in one dimension
\cite{MayStreitbuerger2022}, to higher polynomial degrees in two dimensions
\cite{StreitbuergerBirkeEngwerMay2023} and to the linear wave equation
\cite{BirkeEngwerMayPetriRanocha2026}, investigated its fully discrete stability
\cite{PetriBirkeEngwerRanocha2025}, and proved an \textit{a priori} error estimate for its
first-order variant in two dimensions \cite{BirkeEngwerGiesselmannMay2026}.
SRD has been formulated for DG schemes as well \cite{Giuliani2022,TaylorWilcoxChan2025}.
Yet another option is offered by Galerkin difference methods, whose basis functions extend
over several cells \cite{KaurYanHicken2023,KreissMunchWang2026}.

Other DG-based works focus on adapting the time stepping scheme in the vicinity of the embedded
boundary, e.g., by local time stepping, often in combination with cell agglomeration
\cite{GeisenhoferKummerMueller2019,SchoederStickoKreissKronbichler2020}. The approach
proposed in this paper also adapts the time stepping near the embedded boundary: instead of
advancing small cut cells with smaller explicit time steps, it treats them implicitly, so
that one common time step can be used on the entire mesh. It combines two families of
high-order schemes, the explicit ADER-DG scheme and the implicit space-time DG scheme, which
we briefly review next.

The ADER approach was originally introduced by Toro \textit{et al.} with the aim of constructing a family of arbitrary high order accurate fully-discrete one-step FV schemes \cite{ToroMillingtonNejad2001,SchwartzkopffMunzToro2002,TitarevToro2002,TitarevToro2005,ToroTitarev2006,toro-book}, in which a high order accurate numerical flux is obtained from the approximate solution of generalized Riemann problems (GRP). It was subsequently extended to unstructured meshes \cite{DumbserKaeserTitarevToro2007} and to DG schemes in \cite{DumbserMunz2006,LTS}. In all early works on ADER schemes high order time derivatives were replaced by high order space derivatives with the aid of the Cauchy-Kovalewskaya procedure, which becomes algebraically very cumbersome for complex hyperbolic systems. In \cite{DumbserEnauxToro2008,DumbserBalsaraToroMunz2008} the Cauchy-Kovalewskaya procedure was then successfully replaced by an element-local space-time Galerkin predictor, see also \cite{LoercherGassnerMunz2007,GassnerDumbserHindenlangMunz2011}. ADER-DG schemes therefore consist of a local space-time predictor, which is computed in each cell in an element-local manner, without any coupling with the neighbor cells, and of a fully discrete one-step corrector that couples neighboring cells with each other via a numerical flux. ADER-DG methods have been successfully extended to non-conservative hyperbolic systems \cite{ADERNC} and have been combined with \textit{a posteriori} subcell FV limiting \cite{DumbserZanottiLoubereDiot2014,DumbserLoubere2016}, with adaptive mesh refinement (AMR) \cite{ZanottiFambriDumbser2015,Exahype} and with moving meshes in the Arbitrary-Lagrangian-Eulerian (ALE) formalism \cite{BoscheriDumbser2017,GaburroBoscheriChiocchettiKlingenbergSpringelDumbser2020,GaburroKlimaBonafiniTavelli2027}, see \cite{Frontiers2020} for an overview. In the context of block-structured AMR, which was introduced by Berger \textit{et al.} \cite{Berger-Oliger1984,Berger-Colella1989}, ADER schemes naturally allow for arbitrary high order time-accurate local time stepping (LTS) \cite{AMR3DCL,ZanottiFambriDumbser2015}, since their space-time structure makes them fully discrete one-step schemes, unlike classical multi-stage Runge-Kutta DG methods \cite{CBS4}. In this paper we once more use the space-time nature of ADER-DG schemes, since it allows for a natural coupling with implicit space-time DG schemes.

In space-time DG schemes \cite{vanderVegtvanderVen2002,KlaijvanderVegtvanderVen2006,SudirhamvanderVegtvanDamme2006,HiltebrandMishra2014,TavelliDumbser2015,TavelliDumbser2017}, the discrete solution is a piecewise polynomial in space and time, and the space-time polynomials of neighboring cells are coupled with each other inside the time slab via numerical fluxes. The resulting schemes are in general fully implicit and allow large time steps, but they require the solution of a large system of nonlinear algebraic equations in each time step, and they are therefore considerably more expensive than explicit DG schemes. Exceptions are the space-time DG schemes of \cite{TavelliDumbser2015,TavelliDumbser2017}, which are written in such a way that they follow the semi-implicit strategy of Casulli \cite{Casulli1990}, so that essentially only a system for the pressure has to be solved in each time step.

In this paper we couple these two families of schemes, which are both of the space-time DG type: in ADER-DG schemes only the predictor is a space-time DG scheme, which is however element-local and does not couple a cell with its neighbors, while the space-time DG scheme is fully implicit. The idea of treating only the cut cells and their direct neighbors implicitly, and the rest of the mesh explicitly, follows the mixed explicit-implicit FV scheme of May and Berger \cite{MayBerger2017}. In the FV setting, the combination of an explicit and an implicit scheme of the same order into a mixed scheme of the same order turned out to be rather difficult \cite{MayLaakmann2024}. The main novelty of the present paper is a zonally implicit high order DG method for Cartesian cut cell meshes, which is not based on classical Runge-Kutta DG schemes, but which combines explicit fully-discrete one-step ADER-DG schemes on all Cartesian cells away from the embedded boundary with the implicit space-time DG scheme used only on a rather thin zone of implicit cells, which consists of all cut cells and of all their direct face neighbors. This is why we call our method also zonally implicit, and we refer to it as ADER-ZIST (ADER-DG with Zonally Implicit Space-Time DG). Since both schemes employ space-time polynomials and are coupled with each other only via the arguments of the numerical flux, this naturally leads to a conservative fully-discrete one-step scheme whose time step is determined by the size of the background cells only. The same idea, namely the use of a locally implicit space-time DG scheme on degenerate elements within an otherwise explicit ADER-DG scheme, has very recently been employed by Gaburro \textit{et al.} \cite{GaburroKlimaBonafiniTavelli2027} to treat isolated space-time sliver elements robustly within ALE ADER-DG schemes on moving unstructured meshes in three space dimensions with topology changes. 
To the best of our knowledge, the coupling of explicit ADER-DG schemes with implicit space-time DG methods has never been applied to cut cell meshes before. In \cite{GaburroKlimaBonafiniTavelli2027} each degenerate space-time element has only standard elements as neighbors and is solved in an isolated manner only together with its direct neighbors, for all of which an explicit ADER-DG predictor solution is already known, while in the present approach an arbitrary number of adjacent cut cells, together with all their direct neighbors, forms one coupled zonally implicit system. The implicit system is totally agnostic to the topology of the embedded boundary and automatically couples as many implicit cells with each other as necessary. 
The present approach also differs from the regionally implicit DG schemes of Guthrey and Rossmanith \cite{GuthreyRossmanith2019,ChristliebGuthreyRossmanith2021}, where the predictor of a Lax-Wendroff type DG scheme is obtained from a truncated space-time DG problem on a region that consists of each element and of its neighbors, in order to remove the dependence of the maximum stable time step on the polynomial degree on all elements of the mesh, while the corrector remains explicit. We note that the use of an implicit space-time DG scheme on the zone of implicit cells can be interpreted as an \textit{algebraic cell merging} technique: instead of merging small cut cells geometrically with their neighbors, or redistributing their state onto a neighborhood \cite{BergerGiuliani2021,Giuliani2022}, the small cut cells are coupled with their neighbors algebraically through the implicit system in each time step, while the geometry of the original cut cells and the full polynomial degree $N$ are kept. Further ingredients of the scheme are the isoparametric treatment of curved embedded boundaries following \cite{BassiRebay1997} and, for problems with shock waves, an artificial viscosity, which is discretized with the non-symmetric interior penalty Galerkin method and which is controlled by an oscillation-eliminating smoothness sensor adapted from \cite{AbgrallLiu2026}.

The rest of the paper is organized as follows. In \cref{sec:method}, we introduce the
governing equations and the cut cell mesh, and we recall the explicit ADER-DG scheme and the
implicit space-time DG scheme. We then present the mixed ADER-ZIST scheme
and its conservation
property, the artificial viscosity for problems with shock waves and some implementation
aspects. In \cref{sec:numerics}, we show numerical results for smooth problems with
straight, curved and non-convex embedded boundaries, for the scattering of an acoustic pulse
at five cylinders and for problems with shock waves. \Cref{sec:conclusion} concludes the
paper with some remarks and an outlook to future work.

\section{Mixed explicit ADER-DG and zonally implicit space-time DG approach}\label{sec:method}

In this section we present the design of our 
ADER-ZIST scheme
for Cartesian cut cell meshes. In \cref{sec:setting} we introduce the governing equations, the cut cell mesh, the discrete spaces and the numerical flux that are used throughout this paper. In \cref{sec:spacetime} we then describe the two building blocks of our approach, namely the explicit ADER-DG scheme run on the Cartesian background mesh away from the embedded boundaries and the zonally implicit space-time DG scheme run on the cut cells and their neighbors. We emphasize that both schemes are derived from the same weak formulation of the PDE in space-time and that both compute a local space-time polynomial of degree $N$ on each cell, the only difference being whether the interaction with the neighbor cells is taken into account already inside the time slab, or not. This observation is the key to the mixed scheme, which is introduced in \cref{sec:mixed}. In \cref{sec:av} we describe the artificial viscosity that is added for problems with shock waves, and we conclude this section with a discussion of some implementation aspects in \cref{sec:implementation}, in particular the treatment of curved boundaries, the solution of the nonlinear system of the implicit cells and its conditioning. Although the basic ideas of our new algorithm in principle also apply to the three-dimensional case, throughout this paper we restrict ourselves to the two-dimensional setting. 

\subsection{Governing equations, cut cell mesh and notation}\label{sec:setting}

We consider hyperbolic systems of conservation laws in two space dimensions of the form
\begin{eqnarray}
  \partial_t \mathbf{Q} + \nabla\cdot\mathbf{F}(\mathbf{Q}) = 0, \qquad \mathbf{x} \in \Omega, \quad t \in (0,T),
  \label{eq:conslaw}
\end{eqnarray}
with initial condition $\mathbf{Q}(\mathbf{x},0) = \mathbf{Q}_0(\mathbf{x})$ and suitable boundary conditions. Here, $\Omega \subset \R^2$ is an open and bounded domain, $\mathbf{x} = (x,y)$ is the vector of spatial coordinates, $\mathbf{Q} \in \Omega_{\mathbf{Q}} \subset \R^m$ is the vector of conserved variables, defined in the space of admissible states $\Omega_{\mathbf{Q}}$, and $\mathbf{F} = (\mathbf{f}, \mathbf{g})$ is the flux tensor, whose columns $\mathbf{f}(\mathbf{Q})$ and $\mathbf{g}(\mathbf{Q})$ are the fluxes in $x$- and $y$-direction, respectively. The flux in the direction of a unit vector $\mathbf{n} = (n_x,n_y)$ is denoted by $\mathbf{F}(\mathbf{Q})\cdot\mathbf{n} = n_x \mathbf{f}(\mathbf{Q}) + n_y \mathbf{g}(\mathbf{Q})$, and we assume that the associated Jacobian $\mathbf{A}(\mathbf{Q},\mathbf{n}) = \partial (\mathbf{F}(\mathbf{Q})\cdot\mathbf{n}) / \partial \mathbf{Q}$ has $m$ real eigenvalues $\lambda_1(\mathbf{Q},\mathbf{n}), \ldots, \lambda_m(\mathbf{Q},\mathbf{n})$ and a complete set of linearly independent eigenvectors for all $\mathbf{Q} \in \Omega_{\mathbf{Q}}$ and all unit vectors $\mathbf{n}$. The governing PDE system considered in this paper is given by the compressible Euler equations of gasdynamics, which read 
\begin{eqnarray}
  \mathbf{Q} = \left( \begin{array}{c} \rho \\ \rho u \\ \rho v \\ \rho E \end{array} \right), \qquad
  \mathbf{f}(\mathbf{Q}) = \left( \begin{array}{c} \rho u \\ \rho u^2 + p \\ \rho u v \\ u (\rho E + p) \end{array} \right), \qquad
  \mathbf{g}(\mathbf{Q}) = \left( \begin{array}{c} \rho v \\ \rho u v \\ \rho v^2 + p \\ v (\rho E + p) \end{array} \right),
  \label{eq:euler}
\end{eqnarray}
with the density $\rho$, the velocity vector $\mathbf{v} = (u,v)$, the total energy per unit mass $E$ and the pressure $p$, which is given by the equation of state of an ideal gas,
\begin{eqnarray}
  p = (\gamma-1) \left( \rho E - \frac{1}{2} \rho \, \| \mathbf{v} \|^2 \right),
  \label{eq:eos}
\end{eqnarray}
with the ratio of specific heats $\gamma$. 

\paragraph{Cut cell mesh}
Following \cite{EngwerMayNuessingStreitbuerger2020}, we construct the computational mesh by intersecting a Cartesian background mesh with the domain $\Omega$, see \cref{fig:cutcellmesh}. Let $\widehat\Omega \supset \Omega$ be a rectangle that is sufficiently large to contain the physical domain $\Omega$, e.g.\ its bounding box, and let $\Mhat$ be a uniform Cartesian mesh of $\widehat\Omega$ with cells $\widehat E$ of size $h_x \times h_y$. The embedded boundary $\partial\Omega \cap \widehat\Omega$ is approximated by one or several polygons. Curved boundaries are represented by a cubic spline interpolation of this polygonal representation of $\partial \Omega$, where we always assume that the boundary has been sampled densely enough by the vertices of the polygon so that the cubic spline approximates $\partial \Omega$ well enough, see \cref{sec:implementation}. The cut cell mesh is then given by
\begin{eqnarray}
  \Mh := \left\{ E := \widehat E \cap \Omega \;\, \big| \;\, \widehat E \in \Mhat, \ \alpha_E \geq \alpha_0 \right\},
  \qquad
  \alpha_E := \frac{|E|}{|\widehat E|} \in (0,1],
  \label{eq:cutmesh}
\end{eqnarray}
where $\alpha_E$ is the so-called \emph{volume fraction} of the cell $E$ compared to the corresponding background cell $\hat{E}$ and $\alpha_0$ is a cut-off value for tiny cut cells. Cells with $E = \widehat E$ and therefore with $\alpha_E = 1$ are called \emph{Cartesian cells}, all other cells are called \emph{cut cells}, and we denote the set of cut cells by $\Mcut$. Cut cells are general polygons, which may be non-convex, and their volume fraction $\alpha_E$ can become arbitrarily small. Cut cells with $\alpha_E < \alpha_0 = 10^{-9}$, which appear for example when the boundary passes almost exactly through a vertex of the background mesh, a situation that in practice cannot always be avoided for general $\Omega$, are simply removed from the mesh; their faces then become part of the embedded boundary. 
Conversely, a cell of the background mesh with $\alpha_E = 1$ is nevertheless treated as a cut cell if one of its faces lies on a slip wall, i.e., if the embedded boundary coincides with this face. 
Apart from this, no special treatment of small cut cells is applied: in particular, cut cells are neither merged with their neighbors nor is the polynomial approximation degree of the DG schemes reduced on small cut cells. For a cell $E \in \Mh$ we denote by $\mathbf{n}_E$ its outward pointing unit normal vector.

\begin{figure}[tb]
  \centering
  \scalebox{2}{\input{cutcell_mesh.tex}}
  \caption{Typical cut cell mesh for the flow around a circular cylinder. Cut cells are marked in red. They have irregular shapes and can be arbitrarily small, i.e.\ their volume fraction $\alpha$ can be very close to zero, see zoom on the right.}
  \label{fig:cutcellmesh}
\end{figure}

\paragraph{The small cell problem}
Throughout this paper we always apply the explicit CFL condition on the background mesh,
\begin{eqnarray}
  \Delta t = \frac{\mathrm{CFL}}{2N+1} \, \min_{E \in \mathcal{S}} \left( \frac{a_{x,E}}{h_x} + \frac{a_{y,E}}{h_y} \right)^{-1},
  \qquad \mathrm{CFL} = 0.4,
  \label{eq:cfl}
\end{eqnarray}
where $N$ denotes the polynomial degree and $a_{x,E} = \max_i |\lambda_i(\mathbf{u}_h^n,\mathbf{e}_x)|$ and $a_{y,E} = \max_i |\lambda_i(\mathbf{u}_h^n,\mathbf{e}_y)|$ are the maximal absolute eigenvalues in $x$- and $y$-direction, evaluated at the nodes of the discrete solution $\mathbf{u}_h^n$ of cell $E$ at time $t^n$. We set $\mathcal{S} = \Mexp$, the set of explicit cells defined in \cref{sec:classification} below, if there is at least one explicit cell, and $\mathcal{S} = \Mh$ otherwise. In all cases, $\Delta t$ is reduced, if necessary, such that prescribed intermediate output times and the final time are reached exactly. Explicit schemes are not stable on cut cells with $\alpha_E \ll 1$ for this choice of the time step, since their CFL condition would require a time step that tends to zero with the size of the cut cell, e.g.\ $\Delta t = \mathcal{O}(\alpha_E h)$ with $h = \max(h_x,h_y)$ for a thin cut cell that is parallel to a face of its background cell. This is the well-known \emph{small cell problem} of Cartesian cut cell methods \cite{ChernColella1987,BayyukPowellvanLeer1993,Quirk1994,BergerHelzelLeVeque2003,ColellaGravesKeenModiano2006,BergerHelzel2012,QinKrivodonova2013,MuellerKraemerEisKummerOberlack2016,MayBerger2017,EngwerMayNuessingStreitbuerger2020,BergerGiuliani2021,Giuliani2022}. The natural and primary goal of the new scheme presented in this work is therefore to use the time step \eqref{eq:cfl} \textit{independently} of the size of the cut cells. To achieve this we propose a mixed explicit-implicit method, ADER-ZIST, where the explicit cells are updated with a cheap ADER-DG scheme, while cut cells and their direct neighbors are updated via a zonally implicit space-time DG scheme which, in all our numerical experiments, remained stable irrespective of the cut cell sizes. Within the DG framework, the coupling between the different schemes is achieved in a very natural and conservative manner with the aid of the numerical flux. As we will see later, ADER-DG schemes are particularly well-suited for this kind of coupling thanks to their fully-discrete one-step nature, see also \cite{GaburroKlimaBonafiniTavelli2027} in the context of Lagrangian schemes on moving unstructured meshes in three space dimensions with topology changes, where a similar explicit-implicit coupling has recently been employed successfully for the locally implicit treatment of tiny space-time sliver elements.   

\paragraph{Time slabs and discrete spaces}
We partition the time interval $[0,T]$ into time slabs $\In = [t^n, t^{n+1}]$ with $t^{n+1} = t^n + \Delta t$, and for a cell $E \in \Mh$ we call $E \times \In$ a \emph{space-time cell}. On each space-time cell, the discrete solution is a space-time polynomial of degree $N$, where the degree refers to each coordinate direction on the Cartesian cells and to the total degree on the cut cells. On the Cartesian cells we use the tensor-product space $\mathbb{Q}_N(E) \otimes \mathbb{P}_N(\In)=\mathbb{Q}_N(E\times\In)$ with a \textit{nodal Lagrange basis}, while on the cut cells, which have a general polygonal shape, we use the space $\mathbb{P}_N(E \times \In)$ of polynomials of total degree $N$ in space \emph{and} time with a \textit{modal basis}, see \cref{sec:implementation} for details. 
We denote the local spatial and space-time spaces by
\begin{eqnarray}
  V(E) := \left\{ \begin{array}{ll} \mathbb{Q}_N(E), & E \notin \Mcut, \\ \mathbb{P}_N(E), & E \in \Mcut, \end{array} \right.
  \qquad
  W(E \times \In) := \left\{ \begin{array}{ll} \mathbb{Q}_N(E) \otimes \mathbb{P}_N(\In), & E \notin \Mcut, \\ \mathbb{P}_N(E \times \In), & E \in \Mcut, \end{array} \right.
  \label{eq:spaces}
\end{eqnarray}
and the global spatial DG space by
\begin{eqnarray}
  V_h := \left\{ \mathbf{v}_h \in L^2(\Omega)^m \;\, \big| \;\, \mathbf{v}_h|_E \in V(E)^m \ \ \forall E \in \Mh \right\}.
  \label{eq:Vh}
\end{eqnarray}
The discrete solution at time $t^n$ is denoted by $\mathbf{u}_h^n \in V_h$. We note that in both cases the restriction of a space-time polynomial $\theta \in W(E\times\In)$ to the time $t^{n+1}$ lies in $V(E)$, so that the space-time polynomial of an implicit cell directly defines the solution at the new time level. 
More precisely and throughout this paper, for the explicit Cartesian ADER-DG cells and for the implicit Cartesian transition cells, we use the same space-time tensor product of the \textit{nodal} Lagrange basis functions that pass through the $N+1$ Gauss--Legendre quadrature nodes in each coordinate direction. Instead, for the cut cells we use a \textit{modal} basis up to degree $N$ in space and time that employs Taylor monomials which are \textit{shifted} and \textit{rescaled} with respect to the \textit{bounding box} of the cut cell. Both bases are defined more precisely in \cref{sec:basis}, see in particular \eqref{eq:modalbasis} for the modal basis of the cut cells. 

\paragraph{Numerical flux}
The numerical flux is the \textit{key ingredient} for a consistent and conservative coupling of the two schemes: the explicit ADER-DG scheme, which is used on the Cartesian cells away from the embedded boundary, and the zonally implicit space-time DG scheme, which is used on all remaining cells.
We thus start the presentation of the scheme in an unusual way, namely with the numerical flux, rather than with the weak form.  
For a face $e \subset \partial E$ shared with a neighbor cell $E'$ we denote by $\mathbf{Q}^-$ and $\mathbf{Q}^+$ the traces of a function $\mathbf{Q}$ from inside $E$ and from inside $E'$, respectively, i.e.\ the so-called boundary-extrapolated values. 
Consequently, $\mathbf{F}^-$ and $\mathbf{F}^+$ denote the boundary-extrapolated fluxes, i.e.\ the traces of the discrete representations of the flux in $E$ and in $E'$, which in general differ from $\mathbf{F}(\mathbf{Q}^-)$ and $\mathbf{F}(\mathbf{Q}^+)$ and which are specified for each type of cell in \cref{sec:spacetime}. Throughout this paper, on all faces the interaction between two neighboring cells is governed by the Rusanov (local Lax--Friedrichs) flux \cite{Rusanov1961}, which is chosen to be a \textit{linear function} of \textit{four arguments}, rather than a nonlinear function of two arguments, as usual, see also \cite{DumbserKaeserTitarevToro2007}. It reads
\begin{eqnarray}
  \Fnum(\mathbf{F}^-,\mathbf{F}^+,\mathbf{Q}^-,\mathbf{Q}^+,\mathbf{n}) = \frac{1}{2} \left( \mathbf{F}^+ + \mathbf{F}^-  \right) \cdot \mathbf{n}
  - \frac{1}{2} \, s_{\max} \left( \mathbf{Q}^+ - \mathbf{Q}^- \right),
  \label{eq:numflux}
\end{eqnarray}
with the maximum signal speed given by 
\begin{eqnarray}
  s_{\max} = \max_{i} \left\{ |\lambda_i(\bar{\mathbf{Q}}^-,\mathbf{n})|, |\lambda_i(\bar{\mathbf{Q}}^+,\mathbf{n})| \right\},
  \label{eq:smax}
\end{eqnarray}
which is a \textit{parameter} rather than a function of the arguments. 
In addition to the particular formulation of the numerical flux, in our method the wave speed estimate $s_{\max}$ is \emph{frozen} at the old time level, i.e.\ the states $\bar{\mathbf{Q}}^\pm$ in \eqref{eq:smax} are the traces of the solution $\mathbf{u}_h^n$ at time $t^n$, while $\mathbf{F}^\pm$ and $\mathbf{Q}^\pm$ in \eqref{eq:numflux} are the traces of the space-time polynomials of the fluxes and the states in the left and right cell at the respective spatial and temporal quadrature point. Hence, $s_{\max}$ is computed only once per spatial quadrature point of a face and per time step and remains unchanged during the Newton iterations of the implicit scheme. This makes the flux linear in its four arguments, so that its derivative with respect to the degrees of freedom of the two cells only requires the Jacobians $\mathbf{A}(\mathbf{Q},\mathbf{n})$ of the physical flux, which enter through $\mathbf{F}^\pm$, and no derivative of the wave speed estimate with respect to the left and right states. The numerical flux is consistent, $\Fnum(\mathbf{F}(\mathbf{Q}),\mathbf{F}(\mathbf{Q}),\mathbf{Q},\mathbf{Q},\mathbf{n}) = \mathbf{F}(\mathbf{Q})\cdot\mathbf{n}$, for any value of $s_{\max}$. It is also conservative, $\Fnum(\mathbf{F}^-,\mathbf{F}^+,\mathbf{Q}^-,\mathbf{Q}^+,\mathbf{n}) = -\Fnum(\mathbf{F}^+,\mathbf{F}^-,\mathbf{Q}^+,\mathbf{Q}^-,-\mathbf{n})$, provided that both cells sharing a face use the same value of $s_{\max}$. Since \eqref{eq:smax} is symmetric in $\bar{\mathbf{Q}}^-$ and $\bar{\mathbf{Q}}^+$ and invariant under $\mathbf{n} \mapsto -\mathbf{n}$, this is guaranteed by computing $s_{\max}$ on both sides from the same traces $\bar{\mathbf{Q}}^\pm$ of $\mathbf{u}_h^n$.

For brevity, we write in the following $\Fnum(\mathbf{Q}^-,\mathbf{Q}^+,\mathbf{n})$, where the flux traces $\mathbf{F}^\pm$ that belong to $\mathbf{Q}^\pm$ are implied. For a detailed overview of further, more accurate, approximate Riemann solvers that may also be used inside our scheme, see the well-known textbook of Toro \cite{toro-book}. However, more sophisticated Riemann solvers may require the computation of more complex approximate Jacobian matrices of the numerical flux, while in the simple numerical flux chosen above only the original Jacobian matrix of the flux of the continuous PDE system is needed.

\paragraph{Boundary conditions}
All boundary conditions are imposed weakly via a suitable ghost state $\mathbf{Q}^+$ that is inserted as outer state into the numerical flux \eqref{eq:numflux}. At the embedded boundary, i.e.\ at the cut boundaries we impose slip wall (reflective) boundary conditions $\mathbf{v}\cdot\mathbf{n} = 0$ via the mirrored state, while at Dirichlet boundaries, where a boundary state $\mathbf{Q}_B$ is prescribed, for example from a known exact solution, we do not use $\mathbf{Q}_B$ directly, but choose the ghost state $\mathbf{Q}^+_{\rm Dir}$ such that the arithmetic average $\frac{\mathbf{Q}^+_{\rm Dir} + \mathbf{Q}^-}2$ with the interior state $\mathbf{Q}^-$ is $\mathbf{Q}_B$, i.e.
\begin{eqnarray}
  \mathbf{Q}^+_{\rm wall} = \left( \begin{array}{c} \rho^- \\ \rho^- \left( \mathbf{v}^- - 2 (\mathbf{v}^- \cdot \mathbf{n}) \, \mathbf{n} \right) \\ (\rho E)^- \end{array} \right),
  \qquad
  \mathbf{Q}^+_{\rm Dir} = 2 \, \mathbf{Q}_B - \mathbf{Q}^-.
  \label{eq:ghost}
\end{eqnarray}
The corresponding outer flux is $\mathbf{F}^+ = \mathbf{F}(\mathbf{Q}^+)$, and on boundary faces of cut cells also the inner flux $\mathbf{F}^- = \mathbf{F}(\mathbf{Q}^-)$ is evaluated pointwise rather than using the inner trace of the discrete flux. 
\subsection{A common starting point: weak formulation of the PDE system in space-time}\label{sec:spacetime}

Both building blocks of our mixed explicit-implicit ADER-space-time DG approach start from the same weak formulation of the governing PDE system in space-time: we multiply \eqref{eq:conslaw} by a test function $\theta \in W(E \times \In)$ and integrate over the space-time cell $E \times \In$, which yields
\begin{eqnarray}
  \int_{\In} \int_E \theta \left( \partial_t \mathbf{Q} + \nabla \cdot \mathbf{F}(\mathbf{Q}) \right) \dx \dt = 0.
  \label{eq:stweak}
\end{eqnarray}
In both methods the time derivative is integrated by parts and the initial value at time $t^n$ is given by the known solution $\mathbf{u}_h^n$ of the cell itself (upwinding in time). The two schemes differ in the way the interaction with the neighbor cells is taken into account. Considering the coupling directly within a time slab leads to a coupled implicit system (space-time DG), while 
considering it only \textit{a posteriori} in a corrector step leads to an element-local problem and an explicit update (ADER-DG).

\subsubsection{Explicit scheme: ADER-DG}\label{sec:ader}

Away from the cut cells, i.e.\ \textit{almost everywhere} in the computational domain, we use the explicit ADER-DG scheme in its more recent formulation given in  \cite{DumbserBalsaraToroMunz2008,DumbserEnauxToro2008,DumbserZanottiLoubereDiot2014,Zanotti2015}, see also \cite{GassnerDumbserHindenlangMunz2011,Frontiers2020}. It consists of two steps: an element-local space-time \emph{predictor} and a fully discrete one-step \emph{corrector}.

\textbf{ADER predictor step.} In the predictor step, \eqref{eq:stweak} is integrated by parts in \emph{time only}, while the flux divergence is deliberately kept in strong form without coupling to any neighbor elements. This results in the following element-local problem, in which the discrete predictor $\mathbf{p}_h$ takes the place of $\mathbf{Q}$: find $\mathbf{p}_h \in W(E \times \In)^m$ such that for all $\theta \in W(E \times \In)$
\begin{equation}
  \int_E \theta(\mathbf{x},t^{n+1}) \, \mathbf{p}_h(\mathbf{x},t^{n+1}) \dx - \int_E \theta(\mathbf{x},t^n) \, \mathbf{u}_h^n(\mathbf{x}) \dx
   - \int_{\In} \int_E \partial_t \theta \, \mathbf{p}_h \dx \dt
  + \int_{\In} \int_E \theta \, \nabla \cdot \mathbf{F}_h(\mathbf{p}_h) \dx \dt = 0,
  \label{eq:predictor}
\end{equation}
where, as usual in ADER-DG schemes, $\mathbf{F}_h(\mathbf{p}_h)$ denotes the representation of the flux in the same nodal space-time basis as $\mathbf{p}_h$, i.e.\ the flux is evaluated pointwise at the space-time nodes. The trace of $\mathbf{F}_h(\mathbf{p}_h)$ is the flux $\mathbf{F}^-$ of an explicit cell in \eqref{eq:numflux}. Since no face integrals appear in \eqref{eq:predictor}, there is \emph{no coupling} with neighbor cells, and $\mathbf{p}_h$ can be computed locally and independently on each cell. The predictor is a high order accurate local approximation of the solution of the Cauchy problem in the small with initial data $\mathbf{u}_h^n|_E$, which only uses information from inside $E$. The nonlinear system \eqref{eq:predictor} is solved by the discrete Picard iteration
\begin{equation}
   \int_E \theta(\mathbf{x},t^{n+1}) \, \mathbf{p}_h^{r+1}(\mathbf{x},t^{n+1}) \dx - \int_{\In} \int_E \partial_t \theta \, \mathbf{p}_h^{r+1} \dx \dt
 = \int_E \theta(\mathbf{x},t^n) \, \mathbf{u}_h^n(\mathbf{x}) \dx - \int_{\In} \int_E \theta \, \nabla \cdot \mathbf{F}_h(\mathbf{p}_h^r) \dx \dt, 
  \label{eq:picard}
\end{equation}
with iteration index $r = 0, 1, \ldots$, 
starting from the initial guess $\mathbf{p}_h^0(\mathbf{x},t) = \mathbf{u}_h^n(\mathbf{x})$, which is constant in time. For linear homogeneous systems with constant coefficients the iteration matrix of \eqref{eq:picard} is nilpotent \cite{Jackson2017}, so that the iteration yields the exact solution of \eqref{eq:predictor} after a finite number of iterations, while for general nonlinear systems it is contractive and thus converges for sufficiently small time steps, i.e.\ under a CFL-type condition that depends on the Lipschitz constant of the flux, the time step and the size of the cell $E$, see  \cite[Sec.~2.6]{Frontiers2020} for details.
In our implementation we always carry out a fixed number of $N+2$ Picard iterations for simplicity; all our numerical experiments indicate that this is enough to retain the nominal order of accuracy.

\textbf{ADER corrector step.} In the corrector step, the solution is updated with the standard DG discretization in space, which is integrated over the time slab $\In$, where all space-time integrals are evaluated with the predictor: find $\mathbf{u}_h^{n+1}|_E \in V(E)^m$ such that for all $\varphi \in V(E)$
\begin{eqnarray}
  \int_E \varphi \, \mathbf{u}_h^{n+1} \dx = \int_E \varphi \, \mathbf{u}_h^{n} \dx
  + \int_{\In} \int_E \nabla \varphi \cdot \mathbf{F}_h(\mathbf{p}_h) \dx \dt
  - \int_{\In} \int_{\partial E} \varphi \, \Fnum(\mathbf{p}_h^-,\mathbf{p}_h^+,\mathbf{n}_E) \ds \dt.
  \label{eq:corrector}
\end{eqnarray}
Since \eqref{eq:corrector} only involves the known predictors of $E$ and of its neighbors, the corrector is an explicit fully-discrete one-step update. The resulting scheme is of nominal order $N+1$ in space and time and is stable under the CFL condition \eqref{eq:cfl}, see \cite{DumbserBalsaraToroMunz2008}. We stress that on a small cut cell not only the explicit corrector would violate its CFL condition, but that also the convergence of the Picard iteration \eqref{eq:picard} of the predictor can no longer be guaranteed, since the contraction condition of \cite[Sec.~2.6]{Frontiers2020} involves the local size of the cell, while the time step \eqref{eq:cfl} is determined by the background mesh.

\subsubsection{Implicit scheme: space-time DG}\label{sec:stdg}

On cut cells and their direct neighbors (see \cref{sec:mixed}), we use the classical space-time DG scheme
of van der Vegt \textit{et al.} 
 \cite{vanderVegtvanderVen2002,KlaijvanderVegtvanderVen2006}, in which the coupling with the neighbor cells is taken into account inside the time slab via the numerical flux. On the Cartesian cells of the implicit region, \eqref{eq:stweak} is integrated by parts in \emph{time and space}, which results in the following problem: find $\mathbf{q}_h \in W(E \times \In)^m$ such that for all $\theta \in W(E \times \In)$
\begin{eqnarray}
  & & \int_E \theta(\mathbf{x},t^{n+1}) \, \mathbf{q}_h(\mathbf{x},t^{n+1}) \dx - \int_E \theta(\mathbf{x},t^n) \, \mathbf{u}_h^n(\mathbf{x}) \dx
  - \int_{\In} \int_E \left( \partial_t \theta \, \mathbf{q}_h + \nabla \theta \cdot \mathbf{F}(\mathbf{q}_h) \right) \dx \dt
  \nonumber \\
  & & + \int_{\In} \int_{\partial E} \theta \, \Fnum(\mathbf{q}_h^-,\mathbf{w}_h^+,\mathbf{n}_E) \ds \dt = 0,
  \label{eq:stdg}
\end{eqnarray}
and we set $\mathbf{u}_h^{n+1}|_E := \mathbf{q}_h(\cdot,t^{n+1})$. As on the explicit cells, the flux $\mathbf{F}^-$ of a transition cell (a Cartesian face neighbor of a cut cell, see \cref{sec:classification}) in \eqref{eq:numflux} is the trace of the flux $\mathbf{F}_h(\mathbf{q}_h)$ in the nodal space-time basis. Here, $\mathbf{w}_h^+$ denotes the outer trace on the space-time face $e \times \In$, which will be specified later in \cref{sec:threestep} when we discuss the coupling between the different schemes employed in ADER-ZIST. On the cut cells, we use the so-called \emph{strong form} 
of the space-time DG scheme, in which the flux divergence is integrated by parts a second time, as in the strong form of the ADER-DG corrector introduced in \cite[Eq.~(10)]{DumbserLoubere2016}: find $\mathbf{q}_h \in W(E \times \In)^m$ such that for all $\theta \in W(E \times \In)$
\begin{eqnarray}
  & & \int_E \theta(\mathbf{x},t^{n+1}) \, \mathbf{q}_h(\mathbf{x},t^{n+1}) \dx - \int_E \theta(\mathbf{x},t^n) \, \mathbf{u}_h^n(\mathbf{x}) \dx
  - \int_{\In} \int_E \partial_t \theta \, \mathbf{q}_h \dx \dt
  + \int_{\In} \int_E \theta \, \nabla \cdot \Pi_h \mathbf{F}(\mathbf{q}_h) \dx \dt
  \nonumber \\
  & & + \int_{\In} \int_{\partial E} \theta \left( \Fnum(\mathbf{q}_h^-,\mathbf{w}_h^+,\mathbf{n}_E) - \left( \Pi_h \mathbf{F}(\mathbf{q}_h) \right)^- \cdot \mathbf{n}_E \right) \ds \dt = 0.
  \label{eq:stdgstrong}
\end{eqnarray}
Here, $\Pi_h$ denotes the componentwise $L^2$ projection onto $W(E \times \In)$, i.e.\ $\int_{\In} \int_E \psi \, (\Pi_h \mathbf{F}(\mathbf{q}_h) - \mathbf{F}(\mathbf{q}_h)) \dx \dt = 0$ for all $\psi \in W(E \times \In)$, and $(\Pi_h \mathbf{F}(\mathbf{q}_h))^-$ denotes its trace on $\partial E$, which is also the flux $\mathbf{F}^-$ of a cut cell in \eqref{eq:numflux} on interior faces. Integrating the volume term of \eqref{eq:stdgstrong} by parts and using that $\nabla\theta \in W(E \times \In)^2$ for $E \in \Mcut$, one finds that \eqref{eq:stdgstrong} coincides with \eqref{eq:stdg} under exact integration. We note that for this it is essential that the trace of the projected flux $\Pi_h \mathbf{F}(\mathbf{q}_h)$, and not the one of $\mathbf{F}(\mathbf{q}_h)$, is subtracted from the numerical flux in the boundary term, since otherwise the difference $\int_{\In}\int_{\partial E} \theta \, ( \Pi_h \mathbf{F}(\mathbf{q}_h) - \mathbf{F}(\mathbf{q}_h^-) ) \cdot \mathbf{n}_E \ds \dt$ would remain, which in general destroys conservation for nonlinear fluxes.

In contrast to \eqref{eq:predictor}, the face integrals in \eqref{eq:stdg} and \eqref{eq:stdgstrong} involve the \emph{unknown} space-time polynomials of the neighboring cells, so that the space-time DG scheme couples neighboring cells and leads to a nonlinear system of equations, i.e.\ to a  \emph{zonally implicit} scheme. It is therefore not subject to the CFL condition of an explicit scheme on the cut cells, and in all computations of \cref{sec:numerics} it is used with the time step \eqref{eq:cfl} computed on the uniform background mesh, supplemented by \eqref{eq:cflvisc} when the artificial viscosity is active, independently of the size of the small cut cells.

\subsection{The mixed explicit-implicit ADER-space-time DG scheme}\label{sec:mixed}

\subsubsection{Main idea and classification of cells}\label{sec:classification}

The main idea of ADER-ZIST follows the mixed explicit-implicit FV scheme of May and Berger \cite{MayBerger2017,MayLaakmann2024}: the cut cells and their direct neighbors are treated implicitly for stability reasons, but only \emph{zonally}, while away from the cut cells and their neighbors the cheaper explicit ADER-DG scheme is used everywhere. More precisely, we partition $\Mh$ into
\begin{itemize}
  \item the set of \emph{cut cells} $\Mcut$,
  \item the set of \emph{transition cells} $\Mtr$, consisting of all Cartesian cells that share a face with a cut cell,
  \item the set of \emph{explicit cells} $\Mexp := \Mh \setminus (\Mcut \cup \Mtr)$.
\end{itemize}
We refer to $\Mimp := \Mcut \cup \Mtr$ as the set of \emph{implicit cells}. On the implicit cells we use the space-time DG scheme, i.e.\ \eqref{eq:stdg} on the transition cells and \eqref{eq:stdgstrong} on the cut cells, while on the explicit cells we use the ADER-DG scheme \eqref{eq:predictor}--\eqref{eq:corrector}. All cut cells are treated implicitly, irrespective of their volume fraction. By construction, a cut cell never shares a face with an explicit cell. The implicit region for a typical flow around a cylinder example is shown in \cref{fig:implicitregion}. As in the FV setting \cite{MayBerger2017}, the Cartesian face neighbors of the cut cells are included in the implicit region, since the domain of dependence of a neighbor of a small cut cell includes the cells on the opposite side of the small cut cell, which can only be reached within one time step if the information is passed \emph{through} the small cut cell. In our practical implementation, the type of each cell is stored in a flag array, with the value 0 for the \textit{explicit} ADER-DG cells using a \textit{nodal} tensor-product space-time basis, 1 for the cut cells using the \textit{implicit} space-time DG scheme based on a \textit{modal} space-time basis, which has fewer degrees of freedom than the tensor-product basis of the same degree $N$, and 2 for the transition cells, which use the \textit{implicit} space-time DG method with the same nodal tensor-product space-time basis as the explicit ADER-DG scheme and which are therefore particularly well-suited for the explicit-implicit coupling proposed in this paper.

\begin{figure}[tb]
  \centering
  \scalebox{1.5}{\input{implicit_region.tex}}
  \caption{Implicit region for the flow around a cylinder example: cut cells and their Cartesian face neighbors (transition cells) are treated implicitly (orange).
    All other cells are treated explicitly (blue).}
  \label{fig:implicitregion}
\end{figure}

On each face, the numerical flux is evaluated with the space-time polynomials of the two adjacent cells: the ADER predictor ${\color{blue}\mathbf{p}_h}$ if the neighbor is an explicit cell, and the space-time DG polynomial ${\color{orange}\mathbf{q}_h}$ if the neighbor is an implicit cell. In particular, on a face between an explicit cell and a transition cell, the flux is evaluated as $\Fnum({\color{blue}\mathbf{p}_h},{\color{orange}\mathbf{q}_h},\mathbf{n})$ from the explicit side and as $\Fnum({\color{orange}\mathbf{q}_h},{\color{blue}\mathbf{p}_h},-\mathbf{n})$ from the transition side. Note that at the explicit-implicit boundary, the element space-time basis functions are exactly the same, i.e.\ the coupling can be done immediately and without any overhead, making full use of the tensor-product structure of the basis on each side of the interface. This is illustrated in \cref{fig:mixedidea} in more detail for a one-dimensional configuration.
 
\begin{figure}[tb]
  \centering
  \input{mixed_idea.tex}
  \caption{Idea of the mixed explicit-implicit scheme in one space dimension. 	Explicit cells use the (explicit) ADER
    predictor $\pc$, the small cut cell and its face neighbors use the (implicit)
    space-time DG polynomial $\qc$. The two schemes are coupled via the arguments of the numerical flux.}
  \label{fig:mixedidea}
\end{figure}

\subsubsection{The three-step algorithm}\label{sec:threestep}

Given $\mathbf{u}_h^n$, one time step of ADER-ZIST consists of the following three steps, which are illustrated in \cref{fig:threestep}.
\begin{description}
  \item[Step 1 (explicit ADER predictor).] For every explicit cell $E \in \Mexp$, compute the ADER predictor ${\color{blue}\mathbf{p}_h}|_E \in W(E \times \In)^m$ by solving \eqref{eq:predictor}. This is an element-local computation.
  \item[Step 2 (implicit space-time DG).] Compute the space-time DG polynomials ${\color{orange}\mathbf{q}_h}|_E \in W(E \times \In)^m$ of all implicit cells $E \in \Mimp$ simultaneously by solving \eqref{eq:stdg} on the transition cells and \eqref{eq:stdgstrong} on the cut cells, where the outer trace in the numerical flux is given by
    \begin{eqnarray}
      \mathbf{w}_h^+ = \left\{ \begin{array}{ll}
        {\color{orange}\mathbf{q}_h}|_{E'} & \text{if the neighbor } E' \text{ is an implicit cell}, \\
        {\color{blue}\mathbf{p}_h}|_{E'} & \text{if the neighbor } E' \text{ is an explicit cell}, \\
        \mathbf{Q}^+({\color{orange}\mathbf{q}_h^-}) & \text{on the embedded or on the outer boundary},
      \end{array} \right.
      \label{eq:outertrace}
    \end{eqnarray}
    with the ghost state $\mathbf{Q}^+$ defined in \eqref{eq:ghost}. As the explicit predictors are known from Step~1, this results in one nonlinear system that couples only the unknowns of the implicit cells. Set $\mathbf{u}_h^{n+1}|_E := {\color{orange}\mathbf{q}_h}|_E(\cdot,t^{n+1})$ for $E \in \Mimp$.
  \item[Step 3 (explicit ADER corrector).] For every explicit cell $E \in \Mexp$, compute $\mathbf{u}_h^{n+1}|_E$ with the corrector \eqref{eq:corrector}, where the outer trace $\mathbf{w}_h^+$ in the numerical flux is again given by \eqref{eq:outertrace}, i.e.\ ${\color{blue}\mathbf{p}_h}|_{E'}$ if the neighbor $E'$ is an explicit cell and ${\color{orange}\mathbf{q}_h}|_{E'}$ if it is a transition cell. On the outer boundary of the domain, the ghost state \eqref{eq:ghost} is evaluated from the interior predictor trace, i.e.\ $\mathbf{w}_h^+ = \mathbf{Q}^+({\color{blue}\mathbf{p}_h^-})$.
\end{description}

\begin{figure}[tb]
  \centering
  \begin{subfigure}{\textwidth}\centering
    \input{step1.tex}
  \caption{Step 1: compute the (explicit) ADER-DG predictor $\pc$.}
  \end{subfigure}\\[0.3cm]
  \begin{subfigure}{\textwidth}\centering
    \input{step2.tex}
  \caption{Step 2: using $\pc$ as flux argument, e.g., $\Fnum(\pc,\qc)$, compute the
      (implicit) space-time DG solution $\qc$.}
  \end{subfigure}\\[0.3cm]
  \begin{subfigure}{\textwidth}\centering
    \input{step3.tex}
  \caption{Step 3: update the remaining (explicit) cells to obtain the full
      solution at $t^{n+1}$.}
  \end{subfigure}
  \caption{The three steps of the mixed ADER-space-time DG scheme in one space dimension. Colored
    vertical lines indicate the faces whose fluxes are used in the respective
    step, colored horizontal lines the cells that compute a solution at time $t^{n+1}$.}
  \label{fig:threestep}
\end{figure}

We note that Steps~1 and 3 are exactly the standard ADER-DG scheme on all explicit cells, except that in Step~3 the neighboring predictor on the faces adjacent to transition cells is replaced by the space-time polynomial obtained by the implicit space-time DG scheme. The nonlinear algebraic system of equations that arises from the implicit discretization contained in Step~2, which involves the cut cells and the transition cells in one single monolithic system, is only as large as the implicit region itself, which is the thin layer of cut cells along the embedded boundary and their direct neighbors. In two space dimensions the number of implicit cells grows only like $\mathcal{O}(h^{-1})$ under mesh refinement, compared to $\mathcal{O}(h^{-2})$ for the whole mesh.

\subsubsection{Conservation}\label{sec:conservation}

The coupling of the explicit ADER-DG cells with the implicit space-time DG cells via the numerical flux makes the mixed scheme naturally conservative: on a face $e$ between an explicit cell $E'$ and a transition cell $E$, the explicit cell uses $\Fnum(\mathbf{p}_h|_{E'},\mathbf{q}_h|_E,\mathbf{n}_{E'})$ in Step~3 and the transition cell uses $\Fnum(\mathbf{q}_h|_E,\mathbf{p}_h|_{E'},\mathbf{n}_E)$ in Step~2. Note that the traces of the discrete flux space-time polynomials are implied in our notation to facilitate readability, but this fact should be explicitly kept in mind. Since $\mathbf{n}_E = -\mathbf{n}_{E'}$ and both fluxes use the same frozen wave speed $s_{\max}$, computed from the traces of $\mathbf{u}_h^n$, the conservation property of the numerical flux implies that these two fluxes cancel.

\begin{lemma}[Conservation]\label[lemma]{lem:conservation}
  Assume periodic boundary conditions, quadrature rules that use the same quadrature points and weights on both sides of each face and that integrate polynomials of degree at most $N$ exactly on each cut cell and on each of its faces, and an exact solve of the implicit system. Then the mixed scheme is conservative, i.e.
  \begin{eqnarray}
    \int_\Omega \mathbf{u}_h^{n+1} \dx = \int_\Omega \mathbf{u}_h^{n} \dx.
    \label{eq:conservation}
  \end{eqnarray}
  With slip wall boundary conditions \eqref{eq:ghost} on the embedded boundary, i.e.\ on boundary faces of cut cells, the same holds for the mass and the total energy.
\end{lemma}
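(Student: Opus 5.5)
The plan is to test each scheme with constants and sum over all cells. Constants lie in every local space $V(E)$ and $W(E\times\In)$: a constant belongs to $\mathbb{Q}_N$ and to $\mathbb{P}_N$. This makes the cell-wise identities telescope into face-flux sums, which then cancel pairwise.

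\emph{Step 1 (explicit cells).} For $E\in\Mexp$ we choose $\varphi\equiv 1$ (componentwise) in the corrector \eqref{eq:corrector}. The volume term $\nabla\varphi\cdot\mathbf{F}_h$ vanishes, which leaves
\[
\int_E \mathbf{u}_h^{n+1}\dx=\int_E\mathbf{u}_h^n\dx-\int_{\In}\int_{\partial E}\Fnum(\mathbf{p}_h^-,\mathbf{w}_h^+,\mathbf{n}_E)\ds\dt .
\]
On Cartesian cells the quadrature is the tensor Gauss rule, and the mass matrix is exact for $\varphi=1$ times $\mathbb{Q}_N$, so this identity holds at the discrete level.

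\emph{Step 2 (transition cells).} For $E\in\Mtr$ we take $\theta\equiv1$ in \eqref{eq:stdg}. Both $\partial_t\theta$ and $\nabla\theta$ vanish, and since $\mathbf{u}_h^{n+1}|_E=\mathbf{q}_h(\cdot,t^{n+1})$ we obtain the same identity with $\mathbf{q}_h^-$ in place of $\mathbf{p}_h^-$.

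\emph{Step 3 (cut cells).} For $E\in\Mcut$ we take $\theta\equiv1$ in the strong form \eqref{eq:stdgstrong}. This is the step where the quadrature assumption enters, and I expect it to be the main obstacle. What remains is the term
\[
\int_{\In}\int_E\nabla\cdot\Pi_h\mathbf{F}\dx\dt-\int_{\In}\int_{\partial E}(\Pi_h\mathbf{F})^-\cdot\mathbf{n}_E\ds\dt .
\]
This must vanish, and it does so by the divergence theorem provided both integrals are computed exactly. The integrands are space-time polynomials of degree $\le N$: $\nabla\cdot\Pi_h\mathbf{F}$ has degree $N-1$, and the trace of $\Pi_h\mathbf{F}$ has degree $N$. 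The assumed exactness on cut cells and their faces therefore suffices; in time we use Gauss quadrature with $N+1$ points, which is exact. The mass terms $\int_E\mathbf{q}_h(\cdot,t^{n+1})$ and $\int_E\mathbf{u}_h^n$ are degree-$N$ integrals and are also exact. Hence the cut-cell update reduces to the same flux-balance identity. Note that only the exactness of the discrete operators as applied with $\theta=1$ matters. The nonlinear flux evaluation inside $\Pi_h$ may be inexact, because the same $\Pi_h\mathbf{F}$ appears in both cancelling terms. The exact implicit solve guarantees that these equations hold exactly.

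\emph{Step 4 (summation and cancellation).} We sum the identities over all $E\in\Mh$. Every interior face appears twice, with normals $\mathbf{n}_E=-\mathbf{n}_{E'}$. The outer trace rule \eqref{eq:outertrace} makes both sides use the same pair of space-time polynomials: $\mathbf{p}/\mathbf{p}$, $\mathbf{p}/\mathbf{q}$, or $\mathbf{q}/\mathbf{q}$. Both sides also use the same flux traces and the same frozen $s_{\max}$, so the conservation property $\Fnum(\mathbf{F}^-,\mathbf{F}^+,\mathbf{Q}^-,\mathbf{Q}^+,\mathbf{n})=-\Fnum(\mathbf{F}^+,\mathbf{F}^-,\mathbf{Q}^+,\mathbf{Q}^-,-\mathbf{n})$ holds pointwise. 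With identical quadrature points and weights on both sides, the two face contributions cancel exactly. Under periodic boundary conditions all faces are interior, which yields \eqref{eq:conservation}.

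\emph{Step 5 (slip walls).} Faces on the embedded boundary belong to cut cells, where $\mathbf{F}^\pm=\mathbf{F}(\mathbf{Q}^\pm)$ are evaluated pointwise with $\mathbf{Q}^+=\mathbf{Q}^+_{\rm wall}$. Mirroring gives $\rho^+=\rho^-$, $(\rho E)^+=(\rho E)^-$ and $p^+=p^-$, while the normal velocity changes sign, $(\mathbf{v}\cdot\mathbf{n})^+=-(\mathbf{v}\cdot\mathbf{n})^-$. For the density and energy components we then check pointwise:
\begin{itemize}
\item the mass flux $\rho\mathbf{v}\cdot\mathbf{n}$ and the energy flux $(\rho E+p)\mathbf{v}\cdot\mathbf{n}$ are odd, so the average $\tfrac12(\mathbf{F}^++\mathbf{F}^-)\cdot\mathbf{n}$ vanishes;
\item the jumps $\rho^+-\rho^-$ and $(\rho E)^+-(\rho E)^-$ vanish, so the dissipation term vanishes too.
\end{itemize}
Hence these components of the wall flux vanish, and summation gives conservation of mass and total energy. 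Momentum is not conserved, which is consistent with the pressure force exerted by the wall.
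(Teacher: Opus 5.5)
Your proof is correct and follows the paper's argument essentially step by step. You test with constants, cancel the volume and boundary terms of $\Pi_h\mathbf{F}(\mathbf{q}_h)$ on cut cells via the divergence theorem, cancel interior face fluxes pairwise thanks to the frozen $s_{\max}$ and shared quadrature, and use the mirrored wall state for mass and energy. The only small differences from the paper are these. The paper lets the two cut-cell terms cancel at each temporal quadrature point, so exactness of the time quadrature is not needed. It also notes that the artificial-viscosity contributions drop out for $\theta=1$: the terms containing $\nabla\theta$ vanish, the remaining face terms are single-valued, and viscous mass and energy fluxes are switched off at walls. Your proposal does not address the artificial-viscosity part.
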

\begin{proof}
  For an explicit cell $E \in \Mexp$, choosing $\varphi = 1$ in \eqref{eq:corrector} gives
  \begin{eqnarray}
    \int_E \mathbf{u}_h^{n+1} \dx = \int_E \mathbf{u}_h^n \dx - \int_{\In} \int_{\partial E} \Fnum({\color{blue}\mathbf{p}_h^-},\mathbf{w}_h^+,\mathbf{n}_E) \ds \dt,
    \label{eq:conservation-exp}
  \end{eqnarray}
  with $\mathbf{w}_h^+ \in \{ {\color{blue}\mathbf{p}_h^+}, {\color{orange}\mathbf{q}_h^+} \}$ as described in Step~3. 
  For an implicit cell $E \in \Mimp$, we choose $\theta = 1$ in \eqref{eq:stdg} and \eqref{eq:stdgstrong}, so that $\partial_t \theta = 0$ and $\nabla \theta = 0$. On a transition cell, all volume terms of \eqref{eq:stdg} then vanish. On a cut cell, the volume term of \eqref{eq:stdgstrong} reduces to $\int_{\In} \int_E \nabla \cdot \mathbf{P} \dx \dt$ with $\mathbf{P} := \Pi_h \mathbf{F}(\mathbf{q}_h)$, and the boundary term contains $-\int_{\In} \int_{\partial E} \mathbf{P}^- \cdot \mathbf{n}_E \ds \dt$. Since $\mathbf{P}(\cdot,t)$ is a polynomial of degree at most $N$ in space, these two terms cancel at every quadrature point by the divergence theorem, which the quadrature reproduces exactly by assumption. This argument only requires the exact integration of polynomials, but not of the nonpolynomial flux $\mathbf{F}(\mathbf{q}_h)$, and it holds irrespective of how the projection $\Pi_h$ is evaluated.
  Since, moreover, $\mathbf{u}_h^{n+1}|_E = \mathbf{q}_h|_E(\cdot,t^{n+1})$ according to Step~2, we obtain in both cases
  \begin{eqnarray}
    \int_E \mathbf{u}_h^{n+1} \dx = \int_E \mathbf{u}_h^n \dx - \int_{\In} \int_{\partial E} \Fnum({\color{orange}\mathbf{q}_h^-},\mathbf{w}_h^+,\mathbf{n}_E) \ds \dt,
    \label{eq:conservation-imp}
  \end{eqnarray}
  with the outer trace $\mathbf{w}_h^+ \in \{ {\color{blue}\mathbf{p}_h^+}, {\color{orange}\mathbf{q}_h^+} \}$ given by \eqref{eq:outertrace}, i.e.\ the inner state and flux traces in the numerical flux are now those of the space-time DG polynomial ${\color{orange}\mathbf{q}_h}$ of the cell itself and not those of an ADER predictor. Summing \eqref{eq:conservation-exp} and \eqref{eq:conservation-imp} over all cells, the contributions of each interior face cancel, since both adjacent cells use the same state and flux traces in the numerical flux, with the two sides interchanged, the same frozen wave speed $s_{\max}$, the same quadrature points and weights and opposite normal vectors. The same holds for the artificial viscosity of \cref{sec:av}, whose volume and stabilization terms vanish for $\theta = 1$ and whose face terms are single-valued. At a slip wall on a boundary face of a cut cell, where also the inner flux $\mathbf{F}^- = \mathbf{F}(\mathbf{Q}^-)$ is evaluated pointwise, the mirrored state \eqref{eq:ghost} has the same density and total energy as the interior state and the opposite normal velocity, so that the mass and energy components of \eqref{eq:numflux} vanish; by construction, the same holds for the viscous fluxes of mass and total energy at a slip wall, see \cref{sec:av}.
\end{proof}

The quadrature rules of \cref{sec:implementation} satisfy the assumption of \cref{lem:conservation} on all cut cells with straight faces. On cut cells with curved boundary faces, the isoparametric map increases the polynomial degree of the integrands, so that for higher polynomial degrees $N$ the divergence theorem for $\Pi_h \mathbf{F}(\mathbf{q}_h)$, and hence \eqref{eq:conservation}, only holds up to the quadrature error.

In all two-dimensional Euler computations of \cref{sec:numerics}, including the test problems with shocks and artificial viscosity, which use quadrature and an inexact solution of the nonlinear system, the relative balance defect $|M_c^{n+1} - M_c^{n} + B_c^n| / |M_c^0|$ of the mass and of the total energy is below $2 \cdot 10^{-11}$ per time step, also on curved cut cells. Here, $M_c^n$ denotes the integral of the corresponding component of $\mathbf{u}_h^n$ over $\Omega$, evaluated with the quadrature rules of the scheme, and $B_c^n$ the integral of the outward numerical flux over the outer boundary of the domain and the time slab $\In$. Apart from quadrature errors on curved cut cells, this defect stems also from the residual of the implicit system that is only solved up to a given tolerance, tested with $\theta = 1$, since the cell averages of the implicit cells are taken from the final Newton iterate. Conservation up to machine precision, independently of the solver tolerance, could be obtained by treating the final iterate $\mathbf{q}_h$ on the implicit cells as a predictor and computing $\mathbf{u}_h^{n+1}$ there with an ADER-type corrector analogous to \eqref{eq:corrector}, i.e.\ from $\mathbf{u}_h^n$ and the volume and face fluxes of $\mathbf{q}_h$. Since time-independent test functions belong to $W(E \times \In)$, this corrector reproduces $\mathbf{q}_h(\cdot,t^{n+1})$ for an exact solve of the implicit system.

\begin{remark}[Relation to flux bounding]
  The coupling described above is the DG analogue of \emph{flux bounding} introduced in \cite{MayBerger2017}: there, the fluxes computed by the explicit scheme on faces adjacent to transition cells are used as input for the implicit scheme. Here, the role of these fluxes is taken over by the ADER predictor $\mathbf{p}_h$, which is computed for all explicit cells in Step 1 of the three-step algorithm. In the FV setting, combining a $p$-th order explicit and a $p$-th order implicit scheme in a way that results in a $p$-th order mixed scheme turned out to be very difficult \cite{MayLaakmann2024}. For ADER-ZIST, the consistent treatment of time in both schemes (\cref{sec:match}) suggests that full accuracy is retained, which is confirmed by our numerical results in \cref{sec:numerics}.
\end{remark}

\subsubsection{Why ADER-DG and space-time DG are a good match for explicit-implicit coupling}\label{sec:match}

Comparing \eqref{eq:predictor} with \eqref{eq:stdg}, we observe that the ADER predictor $\mathbf{p}_h$ of the explicit Cartesian cells and the space-time DG polynomial $\mathbf{q}_h$ of the implicit Cartesian transition cells are elements of the same local space-time space $W(E \times \In)^m$, that both use $\mathbf{u}_h^n$ as initial value at time $t^n$ via the same upwinding in time, and that both have exactly the same nodal tensor-product structure. The only difference is that the predictor neglects the interaction with the neighbor cells inside the time slab, which is taken into account only \textit{a posteriori} by the numerical flux in the corrector \eqref{eq:corrector}, while the space-time DG scheme accounts for it directly through the numerical flux in its weak formulation in space-time, see \eqref{eq:stdg}. In other words, the predictor \eqref{eq:predictor} is the strong form \eqref{eq:stdgstrong} of the space-time DG scheme in which the fluctuations on the spatial faces have been switched off, up to the representation of the flux. For the nodal tensor-product basis through the Gauss--Legendre points and the associated tensor-product Gauss--Legendre quadrature used on the Cartesian cells, see \cref{sec:basis}, the two representations of the flux coincide, since the projection $\Pi_h \mathbf{F}(\mathbf{q}_h)$ evaluated with this quadrature reduces to the pointwise evaluation of the flux at the space-time nodes, i.e.\ to $\mathbf{F}_h(\mathbf{q}_h)$, so that in this case the predictor \eqref{eq:predictor} is exactly the strong form \eqref{eq:stdgstrong} of the space-time DG scheme with the fluctuations on the spatial faces switched off. Therefore, on a face between an explicit and an implicit cell, the numerical flux couples two space-time polynomials of the same kind and of the same order of accuracy in time, and no interpolation or extrapolation in time is needed at the interface between the explicit and the implicit region. This makes it natural to couple the two schemes directly via the numerical flux, using the explicit ADER predictor $\mathbf{p}_h$ as outer trace in the space-time DG scheme \eqref{eq:stdg} and the implicit space-time solution $\mathbf{q}_h$ as outer trace in the explicit ADER corrector step \eqref{eq:corrector}. We also emphasize that the fully-discrete one-step nature of the ADER scheme is again, as in our previous work on local time stepping \cite{LTS,ALELTS1D,AMR3DCL}, the key feature that allows a very simple and natural coupling. The implicit space-time DG scheme needs information from the explicit region for the entire time step of size $\Delta t$. This information is not readily available in classical Runge-Kutta DG (RKDG) schemes \cite{CBS4}, which are advanced only from one stage to the next and which obtain the full temporal information only after the last Runge-Kutta stage has completed. Simultaneously, explicit RKDG schemes would need the information from the implicit space-time DG scheme as boundary condition in each Runge-Kutta stage, but this information is not computable until the last Runge-Kutta stage has finished. A coupling of the implicit space-time DG scheme with an explicit RKDG scheme would therefore require additional ingredients, such as a prediction of the temporal evolution in the explicit region, a special coupling of the Runge-Kutta stages or coupling iterations, while the ADER predictor directly provides the required space-time information before the implicit solve. Furthermore, there are well-known issues with imposing proper time-dependent Dirichlet boundary conditions for high order schemes in space that are based on Runge-Kutta time integration, see \cite{ClainRKBCProblems} for an interesting study of this problem with more details.  

A related situation arises in direct Arbitrary-Lagrangian-Eulerian ADER-DG schemes on moving Voronoi meshes with topology changes \cite{GaburroBoscheriChiocchettiKlingenbergSpringelDumbser2020}. There, degenerate space-time \emph{sliver} elements appear, whose spatial volume vanishes at time $t^n$, so that no initial condition can be imposed in the local predictor. Their predictor is therefore obtained by upwinding on the whole space-time boundary of the sliver element, using the already known predictors of the neighbor elements as input for the numerical flux. In both cases, the missing or too small local spatial volume is compensated by taking the information from the neighbor cells into account already in the local space-time problem, see the recent work of Gaburro \textit{et al.} \cite{GaburroKlimaBonafiniTavelli2027}. Finally, we note that, in contrast to cell merging or to state redistribution \cite{BergerGiuliani2021,Giuliani2022}, which modify the explicit update on small cut cells by the construction of neighborhoods with sufficiently large volume, the present approach keeps the original cut cells and does not require the construction of merging or redistribution neighborhoods,
since the cut cells are handled by the implicit space-time DG scheme.

\subsection{Shock capturing via artificial viscosity}\label{sec:av}

For problems with discontinuities, high order DG schemes require an additional mechanism to suppress spurious oscillations. For ADER-DG schemes on Cartesian and unstructured meshes, the \textit{a posteriori} subcell FV limiter \cite{DumbserZanottiLoubereDiot2014,DumbserLoubere2016} is a robust choice. It relies, however, on a subcell tessellation of each cell, which is not naturally available on cut cells of general shape, and on an \textit{a posteriori} recomputation of troubled cells, which would be cumbersome for the cells of the implicit region, which are coupled with each other. We therefore use artificial viscosity instead, which is switched on only for the test problems with shock waves in \cref{sec:num-shocks}, i.e.\ instead of \eqref{eq:conslaw} we solve the regularized version of the PDE 
\begin{eqnarray}
  \partial_t \mathbf{Q} + \nabla \cdot \mathbf{F}(\mathbf{Q}) = \nabla \cdot \left( \boldsymbol{\varepsilon}_E \nabla \mathbf{Q} \right) = \partial_x \left( \varepsilon_{x,E} \, \partial_x \mathbf{Q} \right) + \partial_y \left( \varepsilon_{y,E} \, \partial_y \mathbf{Q} \right),
  \qquad \boldsymbol{\varepsilon}_E = \mathrm{diag}(\varepsilon_{x,E},\varepsilon_{y,E}),
  \label{eq:av}
\end{eqnarray}
where the viscosity coefficients $\varepsilon_{x,E}$ and $\varepsilon_{y,E}$ are constant in each cell $E$ and in each time step and are applied componentwise to all conserved variables.

\paragraph{Discretization}
The viscous terms are discretized with the non-symmetric interior penalty Galerkin (NIPG) method \cite{OdenBabuskaBaumann1998,RiviereWheelerGirault1999} in the form used in \cite[Sec.~3.2]{HTCDG}, whose viscous numerical flux is based on \cite{GassnerLoercherMunz2007}. For a space-time polynomial $\mathbf{w}_h$ on the cell $E$ and a test function $\theta$, we define
\begin{eqnarray}
  \mathcal{A}_E(\mathbf{w}_h,\theta) & = & \int_{\In} \int_E \nabla \theta \cdot \left( \boldsymbol{\varepsilon}_E \nabla \mathbf{w}_h \right) \dx \dt
  - \int_{\In} \int_{\partial E} \theta \, \frac{1}{2} \left( \boldsymbol{\varepsilon}_e \nabla \mathbf{w}_h^- + \boldsymbol{\varepsilon}_e \nabla \mathbf{w}_h^+ \right) \cdot \mathbf{n}_E \ds \dt
  \nonumber \\
  & & + \int_{\In} \int_{\partial E} \frac{1}{2} \left( \boldsymbol{\varepsilon}_e \nabla \theta \right) \cdot \mathbf{n}_E \left( \mathbf{w}_h^- - \mathbf{w}_h^+ \right) \ds \dt
  + \int_{\In} \int_{\partial E} \eta_e \, \theta \left( \mathbf{w}_h^- - \mathbf{w}_h^+ \right) \ds \dt,
  \label{eq:nipg}
\end{eqnarray}
with the face viscosity $\boldsymbol{\varepsilon}_e = \mathrm{diag}(\varepsilon_{x,e},\varepsilon_{y,e})$, $\varepsilon_{x,e} = \min(\varepsilon_{x,E},\varepsilon_{x,E'})$ and $\varepsilon_{y,e} = \min(\varepsilon_{y,E},\varepsilon_{y,E'})$ on a face shared with the neighbor $E'$ and $\boldsymbol{\varepsilon}_e = \boldsymbol{\varepsilon}_E$ on boundary faces, although harmonic means, as they are common for DG discretizations of elliptic operators, would also be possible.
The penalty coefficient is chosen as
\begin{eqnarray}
  \eta_e = (2N+1) \left( \frac{\varepsilon_{x,e} \, n_x^2}{h_{x,e}} + \frac{\varepsilon_{y,e} \, n_y^2}{h_{y,e}} \right),
  \label{eq:penalty}
\end{eqnarray}
where $h_{x,e} = \min(h_{x,E},h_{x,E'})$ and $h_{y,e} = \min(h_{y,E},h_{y,E'})$ on a face shared with the neighbor $E'$ and $h_{x,e} = h_{x,E}$, $h_{y,e} = h_{y,E}$ on boundary faces, with the local cell sizes $h_{x,E}$ and $h_{y,E}$ defined below, so that all face quantities in \eqref{eq:nipg} are single-valued. Recall that $(n_x,n_y) = \mathbf{n}_E$ denotes the outward unit normal of $E$ on $e$; since only $n_x^2$ and $n_y^2$ enter \eqref{eq:penalty}, $\eta_e$ is the same for both adjacent cells. On the implicit cells, $\mathcal{A}_E(\mathbf{q}_h,\theta)$ is added to the left-hand side of \eqref{eq:stdg} and \eqref{eq:stdgstrong}, with the outer trace $\mathbf{w}_h^+$ given by \eqref{eq:outertrace}; we stress that the viscous terms are always discretized in the form \eqref{eq:nipg}, also on the cut cells, where the inviscid flux is used in strong form. On the explicit cells, $\mathcal{A}_E(\mathbf{p}_h,\varphi)$ is subtracted from the right-hand side of the corrector \eqref{eq:corrector}, while the local predictor \eqref{eq:predictor} is only augmented by the volume term $-\int_{\In}\int_E \theta \, \nabla \cdot ( \boldsymbol{\varepsilon}_E \nabla \mathbf{p}_h ) \dx \dt$, without any face terms, consistent with its local nature. Summing the third term of \eqref{eq:nipg} over the two cells adjacent to a face yields $\avg{\boldsymbol{\varepsilon}_e \nabla \theta} \cdot \mathbf{n}_E \, \jump{\mathbf{w}_h}$ with $\jump{\mathbf{w}_h} = \mathbf{w}_h^- - \mathbf{w}_h^+$ and a \emph{positive} sign, i.e.\ the sign is opposite to the one of the symmetric interior penalty method and of \cite{GassnerLoercherMunz2007}, as in \cite{HTCDG}, which leads to the NIPG method. For the NIPG method, the two consistency terms cancel in the energy estimate, so that the viscous bilinear form is nonnegative for any nonnegative penalty coefficient; the factor $(2N+1)/h$ in \eqref{eq:penalty} is therefore not a tuning parameter needed for stability, as it would be for the symmetric interior penalty method, but the scaling of the jump term according to   \cite{GassnerLoercherMunz2007}. Since the viscosity is kept constant during a time step, $\mathcal{A}_E$ is linear in $\mathbf{w}_h$, and its contribution to the Jacobian of the implicit system is exact without any additional effort. At the embedded wall, $\mathbf{w}_h^+$ is the mirrored state \eqref{eq:ghost} and $\nabla \mathbf{w}_h^+$ is obtained by the same mirror reflection of the gradient of the primitive variables, which corresponds to an even extension of density, pressure and tangential velocity and to an odd extension of the normal velocity across the wall. In addition, the second term of \eqref{eq:nipg} is set to zero at the wall for the mass and the total energy, so that, since the jumps of these two components vanish there, neither mass nor total energy crosses the wall also in the presence of artificial viscosity. At the outer Dirichlet boundaries, the viscous terms are omitted.

\paragraph{Choice of the artificial viscosity}
The artificial viscosity coefficients are simply chosen as
\begin{eqnarray}
  \varepsilon_{x,E} = \frac{1}{2} \, \lambda_{\max} \, \frac{h_{x,E}}{2N+1} \, \chi_E,
  \qquad
  \varepsilon_{y,E} = \frac{1}{2} \, \lambda_{\max} \, \frac{h_{y,E}}{2N+1} \, \chi_E,
  \label{eq:epsilon}
\end{eqnarray}
see also \cite{TavelliDumbser2017}. Here, $\lambda_{\max}$ is the maximum absolute eigenvalue of the flux Jacobians in $x$- and $y$-direction over all cells of the mesh, evaluated with the solution $\mathbf{u}_h^n$, $\chi_E \in [0,1]$ is a shock sensor defined below, and $h_{x,E}$ and $h_{y,E}$ denote the local cell sizes in $x$- and $y$-direction. For Cartesian cells, these are the mesh sizes $h_x$ and $h_y$ of the background mesh, while for cut cells we use the extent of the bounding box of $E$ in $x$- and $y$-direction. Scaling the viscosity \emph{separately} in each direction with the \emph{local} cell size is essential for cut cells, whose extent in $x$- and $y$-direction can differ by orders of magnitude. It also ensures that $\boldsymbol{\varepsilon}_E \to 0$ under mesh refinement, i.e.\ the added term is a vanishing viscosity regularization. Moreover, by \eqref{eq:epsilon}, $\varepsilon_{x,E}/h_{x,E} = \lambda_{\max}\chi_E/(2(2N+1)) \leq \lambda_{\max}/(2(2N+1))$ for every cell, independently of its size. On a face $e$, let $E^*$ denote the adjacent cell with the smaller $h_x$. Then $h_{x,e} = h_{x,E^*}$ and $\varepsilon_{x,e} \leq \varepsilon_{x,E^*}$, and hence
\begin{eqnarray}
  \frac{\varepsilon_{x,e}}{h_{x,e}} \leq \frac{\varepsilon_{x,E^*}}{h_{x,E^*}} \leq \frac{\lambda_{\max}}{2(2N+1)}.
  \label{eq:avbound}
\end{eqnarray}
The same bound holds in $y$-direction. Inserting both bounds into \eqref{eq:penalty} and using $n_x^2+n_y^2=1$ shows that
\begin{eqnarray}
  \eta_e \leq \frac{1}{2} \, \lambda_{\max},
  \label{eq:penaltybound}
\end{eqnarray}
i.e.\ independently of the size and shape of the cells, the penalty coefficient is bounded by one half of the maximum signal speed, which is the order of magnitude of the dissipation coefficient $\frac{1}{2} s_{\max}$ of the Rusanov flux \eqref{eq:numflux}, compare with the coefficient $\frac{1}{2} s_{\max} + (2N+1) \varepsilon / \Delta x$ of the combined inviscid and viscous jump term in \cite{HTCDG}.

The additional parabolic term introduces an additional time step restriction on the explicit cells. In addition to \eqref{eq:cfl}, we therefore require
\begin{eqnarray}
	\Delta t \leq \mathrm{CFL}_{\mathrm{visc}}
	\left( \frac{2\,\varepsilon_x}{\bigl(h_x/(2N+1)\bigr)^2} + \frac{2\,\varepsilon_y}{\bigl(h_y/(2N+1)\bigr)^2} \right)^{-1},
	\qquad \mathrm{CFL}_{\mathrm{visc}} = 0.2,
	\label{eq:cflvisc}
\end{eqnarray}
where $\varepsilon_x$ and $\varepsilon_y$ denote the viscosity coefficients \eqref{eq:epsilon} of the Cartesian cells without the sensor factor, i.e.\ with $\chi_E = 1$. Note that this restriction, too, depends only on the size of the background cells.

\paragraph{Smoothness sensor}
The shock sensor $\chi_E$ in \eqref{eq:epsilon} is based on an element-based smoothness indicator $\omega_E \in [0,1]$, which is adapted from the oscillation-eliminating (OE) parameter of Abgrall and Liu \cite{AbgrallLiu2026}.
 It is computed once per time step from $\mathbf{u}_h^n$ as
\begin{eqnarray}
  \omega_E & = & \exp \left( - \frac{1}{N_E} \sum_{e \subset \partial E} \max_{1 \leq i \leq m} \sigma_{e,E}\big( u_h^{(i)} \big) \right),
  \label{eq:sensor} \\
  \sigma_{e,E}(v) & = & \sum_{\kappa=0}^{N} \ell_{e,E}^{\kappa} \sum_{a+b=\kappa} \binom{\kappa}{a}
  \frac{ \frac{1}{|e|} \int_e \left| \partial_x^a \partial_y^b v^- - \partial_x^a \partial_y^b v^+ \right| \ds }
       { \norm{ v - \langle v \rangle_\Omega }_{L^\infty(\Omega)} + 10^{-10} },
  \label{eq:sigma}
\end{eqnarray}
where $u_h^{(i)}$ is the $i$-th component of $\mathbf{u}_h^n$, $N_E$ denotes the number of faces of $E$, $\langle v \rangle_\Omega$ the mean value of $v$ over $\Omega$, and the $L^\infty$ norm is evaluated at the nodes and quadrature points of all cells; the constant $10^{-10}$ in the denominator avoids a division by zero for components that are constant in $\Omega$. For the two momentum components, the maximum of their two normalization factors is used for both, so that a small variation of one momentum component is not amplified by a separate normalization. The local length scale is $\ell_{e,E} = h_{x,E}$ for faces whose normal vector satisfies $|n_x| \geq |n_y|$ and $\ell_{e,E} = h_{y,E}$ otherwise, with the local cell sizes of \eqref{eq:epsilon}. On the outer Dirichlet boundary only the jump of the values ($\kappa = 0$) with respect to the ghost state \eqref{eq:ghost} is taken into account, while faces on the embedded boundary and, for transition cells, the faces shared with a cut cell do not contribute to the sum. The sensor measures the jumps of the solution and of its derivatives across the cell faces: $\omega_E \approx 1$ in smooth regions, such that essentially no viscosity is added, and $\omega_E \to 0$ close to discontinuities. Compared to \cite{AbgrallLiu2026}, we additionally include the jumps of the solution values ($\kappa = 0$), which vanish for the globally continuous point values used there, we sum the derivative jumps up to the order $\kappa = N$ instead of using only the first and second derivatives, and we replace the CFL-type prefactor of \cite{AbgrallLiu2026} by one. Finally, the raw sensor value $1-\omega_E$ is spread over a layer of cells around a flagged cell, which yields the sensor value $\chi_E$ used in \eqref{eq:epsilon}: each vertex of the background mesh takes the maximum of the values $1-\omega_{E'}$ of its adjacent cells $E'$, and $\chi_E$ is the maximum of these vertex values over the vertices of $E$ (and at least $1-\omega_E$); for cut cells, only those vertices are used that coincide with vertices of the background mesh.

\subsection{Implementation aspects}\label{sec:implementation}

\subsubsection{Cut cell geometry and quadrature}

The cut cells are computed by intersecting each background cell with the polygonal representation of the embedded boundary, which results in general, possibly non-convex polygons. For the evaluation of the volume integrals, each cut cell is subdivided into subtriangles by ear clipping, and for the volume integrals on each subtriangle we use the conical product Gauss rule of Stroud \cite{Stroud1971} with $(N+2)^2$ points, built from Gauss--Jacobi points in the collapsed direction, which is exact for polynomials of degree $2N+3$. To avoid the loss of accuracy that is caused by a piecewise linear approximation of curved boundaries, the embedded boundary is represented by a cubic spline through the vertices of the polygon (a natural or clamped spline for open boundary polygons and a periodic spline for closed polygons), and all vertices of the cut cells that lie on a curved boundary are placed exactly on the spline. We assume that the boundary $\partial \Omega$ is sufficiently densely sampled by the points of the boundary polygon so that a cubic spline approximation of $\partial \Omega$ is accurate enough. The subtriangles of a cut cell that are adjacent to the curved boundary are treated as isoparametric curved triangles of degree $\max(N,1)$, following Bassi and Rebay \cite{BassiRebay1997}, and are integrated with $(N+4)^2$ quadrature points. The integrals on curved boundary faces are computed with $N+2$ Gauss points, using the normal vector and the arc length element of the isoparametric boundary map, while $N+1$ Gauss points are used on straight faces. If several consecutive segments of the curved boundary lie in the same cut cell, they are merged into one single curved face, whose isoparametric nodes lie on the spline; this avoids very small subtriangles and reduces the number of quadrature points.

In time, all cells use the $N+1$ Gauss--Legendre points of $\In$, so that the numerical flux on a face between an explicit and an implicit cell is evaluated at the same temporal quadrature points from both sides. On a face between a transition cell and a cut cell, the trace of the modal polynomial of the cut cell is evaluated directly at the quadrature points of the common face.
\medskip

\subsubsection{Basis functions}\label{sec:basis}

On the Cartesian cells, i.e.\ on the explicit and on the transition cells, we use a \textit{nodal} tensor-product Lagrange basis defined via the $N+1$ Gauss--Legendre points in each space direction and in time, so that the ADER predictor, the flux and the space-time DG polynomials are represented by their values at the space-time Gauss--Legendre nodes and all integrals are computed with the associated tensor-product Gauss--Legendre quadrature. This allows a simple dimension-by-dimension implementation, see also \cite{AMR3DCL,Zanotti2015}. On the cut cells we use a \textit{modal} basis of polynomials of degree up to $N$ in space and time, i.e.\ of $\mathbb{P}_N(E \times \In)$, that consists of the rescaled and shifted Taylor monomials
\begin{eqnarray}
  \theta_\ell = \tau^k \, \xi^i \, \eta^j, \quad i+j+k \leq N,
  \qquad
  \tau = \frac{t-t^n}{\Delta t}, \quad \xi = \frac{x - x_E^c}{h_{x,E}}, \quad \eta = \frac{y - y_E^c}{h_{y,E}},
  \label{eq:modalbasis}
\end{eqnarray}
where $\ell = (i,j,k)$ is a multi-index, $(x_E^c,y_E^c)$ is the center of the bounding box of $E$ and $h_{x,E}$ and $h_{y,E}$ are its extents in $x$- and $y$-direction, so that $\tau \in [0,1]$ and $\xi, \eta \in [-\frac{1}{2},\frac{1}{2}]$ on the bounding box. The basis is not orthonormalized. In our space-time DG scheme the element-local space-time mass and stiffness matrices are computed with the quadrature rules described above.

\subsubsection{Solving the implicit system}

The nonlinear system of Step~2, see \cref{sec:threestep}, has
\begin{eqnarray}
  n_{\rm dof} = m \left( (N+1)^3 \, |\Mtr| + \frac{(N+1)(N+2)(N+3)}{6} \, |\Mcut| \right)
  \label{eq:ndof}
\end{eqnarray}
unknowns, i.e.\ the space-time degrees of freedom of all transition cells and cut cells, which are solved together in one monolithic system. As $\Mimp$ only consists of the cut cells and of their face neighbors, the system is small compared to the total number of unknowns. We write the system as $\mathbf{R}(\hat{\mathbf{q}}) = 0$, where $\hat{\mathbf{q}}$ is the vector of all degrees of freedom of the implicit cells and $\mathbf{R}$ the residual of \eqref{eq:stdg} and \eqref{eq:stdgstrong}, tested with all basis functions, where the integrals of each cell are expressed in the reference coordinates of the cell, i.e.\ with respect to the space-time unit cube, which for cut cells corresponds to the space-time bounding box. The system is solved with an inexact Newton method \cite{DemboEisenstatSteihaug1982,BellaviaMorini2001},
\begin{eqnarray}
  \mathbf{J}(\hat{\mathbf{q}}^k) \, \delta\hat{\mathbf{q}}^k = \mathbf{R}(\hat{\mathbf{q}}^k), \qquad \hat{\mathbf{q}}^{k+1} = \hat{\mathbf{q}}^k - \delta\hat{\mathbf{q}}^k, \qquad k = 0, 1, 2, \ldots,
  \label{eq:newton}
\end{eqnarray}
which is stopped as soon as the Euclidean norm $\| \mathbf{R}(\hat{\mathbf{q}}^k) \|$ is smaller than a prescribed tolerance. The residual of each cell is evaluated in the reference coordinates of the cell, i.e.\ it is divided by the area of the Cartesian cell or, for a cut cell, by the area of its bounding box. As initial guess we simply use the spatial polynomial $\mathbf{u}_h^n$, extended constantly in time. The Jacobian $\mathbf{J} = \partial \mathbf{R} / \partial \hat{\mathbf{q}}$ is exact: it contains the Jacobians $\mathbf{A}(\mathbf{q}_h,\mathbf{n})$ of the physical flux at the current iterate, the frozen wave speeds $s_{\max}$ of \eqref{eq:smax}, the linear viscous terms \eqref{eq:nipg} and the linearization of the ghost states \eqref{eq:ghost}, including the full coupling between neighboring implicit cells. It is never assembled; instead, the Jacobian-vector products are computed in a matrix-free manner by the same loops over cells and faces that compute the residual. The linear systems in \eqref{eq:newton} are solved with the restarted GMRES method \cite{SaadSchultz1986} with a restart after 20 iterations and with left preconditioning, until the norm of the preconditioned linear residual is smaller than $\textnormal{tol}=\min( 10^{-2} \|\mathbf{R}(\hat{\mathbf{q}}^k)\|, \max(10^{-16}, 10^{-6-k}) )$, i.e.\ the preconditioned linear residual of each Newton step must be at least two orders of magnitude smaller than the current nonlinear residual, and below a tolerance that decreases exponentially with the Newton iterations.
Note that it is a typical feature of inexact Newton methods to solve the linear systems of the first Newton iterations with a larger GMRES tolerance than those of the later ones, see \cite{DemboEisenstatSteihaug1982,BellaviaMorini2001} for details. As preconditioner we use a simple block Jacobi preconditioner with one block per implicit cell, i.e.\ with blocks of size $m(N+1)^3$ on transition cells and $m(N+1)(N+2)(N+3)/6$ on cut cells, which contain the derivatives of the residual of the cell with respect to its own degrees of freedom and whose exact inverses are computed and stored. For efficiency, the preconditioner is computed at the beginning of the first time step and then recomputed only every $n_{\rm prec}$ time steps, with $n_{\rm prec} = 1000$ for smooth problems and $n_{\rm prec} = 5$ for problems with shock waves. Since $\mathbf{J}$ is exact, no approximation error is introduced in the Jacobian-vector products.

\medskip 

\subsubsection{Conditioning}\label{sec:conditioning}

Small cut cells can potentially lead to ill-conditioned systems, since the entries of their mass and stiffness matrices can be much smaller than those of their neighbors, and since the local basis can become ill-conditioned on cells of degenerate shape. In our experience, the conditioning problem is less severe than expected, which we attribute to the following properties of the scheme:
\begin{itemize}
  \item \textbf{Scaling with the bounding box.} The modal basis \eqref{eq:modalbasis} and the residual of each cut cell are defined with respect to the bounding box of the cut cell rather than with respect to the background cell. This removes the overall area factor of the bounding box from the local equations, so that the local mass matrices depend on the ratio of the area of the cut cell to the area of its bounding box instead of on $\alpha_E$, while the derivative terms contain the corresponding inverse extents of the bounding box. For small cut cells at a corner of a background cell and for thin cut cells that are parallel to a face of the background cell, this ratio stays bounded away from zero, even if $\alpha_E \ll 1$; only thin cut cells that cross a background cell diagonally lead to small ratios.
  \item \textbf{Block Jacobi preconditioning.} As each diagonal block contains the complete local space-time operator of a cell, the left preconditioning multiplies the equations of each cell with the inverse of its own diagonal block, which removes the remaining scaling of the local equations with the size of the cell. The preconditioner matrix is computed by an exact inversion of the diagonal block of each implicit space-time element. 
  \item \textbf{Elimination of vanishing cut cells.} Cut cells with $\alpha_E < 10^{-9}$ are removed from the mesh, see \eqref{eq:cutmesh}. Apart from this, the polynomial degree is \emph{not} reduced on small cut cells.
  \item \textbf{Artificial viscosity.} Since the viscosity \eqref{eq:epsilon} of a cut cell scales with the extent of its bounding box and the minimum of the two adjacent cells is used on the faces, the penalty coefficient satisfies the bound \eqref{eq:penaltybound} independently of the size of the cut cells, so that the penalty coefficient itself does not grow when the volume of a cut cell tends to zero. The conditioning of the complete discrete operator, however, also depends on the shape of the cells, on the scaling of the residual and on the preconditioning.
\end{itemize}
A detailed analysis of the conditioning is beyond the scope of this paper and will be the subject of future work.

\section{Numerical Results}\label{sec:numerics}

In this section, we present a set of numerical results obtained with ADER-ZIST, i.e.\ the mixed explicit ADER-DG and zonally implicit space-time DG scheme for Cartesian cut cells presented in this paper. The overall aim of this section is to benchmark the accuracy, robustness and overall performance of the scheme on a set of well-established test cases. 

We begin with a simple one-dimensional accuracy study.  
Then, we examine the accuracy of the method in two space dimensions for a set of smooth problems using Cartesian cut cell meshes with straight and curved embedded boundaries. To illustrate the ability of the scheme to treat more complex geometries and multiple embedded objects, we also show a smooth acoustic wave propagation problem in a domain containing five circular cylinders. Furthermore, we consider problems involving shocks, for which the artificial viscosity is switched on, and finally we compare the computational cost of ADER-ZIST with the one of a globally implicit space-time DG scheme and, for linear scalar advection, also with a classical explicit Runge-Kutta DG scheme with DoD stabilization. 

\subsection{Setup}\label{sec:num-setup}

\paragraph{Software}
The ADER-ZIST scheme has been implemented in three independent research codes: two Fortran codes \texttt{CC1D} and \texttt{CC2D} written from scratch, and an independent implementation based on the well-known finite element library \texttt{deal.II} \cite{dealII98,dealIIdesign}. The different implementations have been used to cross-check the results. 
All results presented in this paper have been computed with the Fortran codes \texttt{CC1D} and \texttt{CC2D}, apart from the Runge-Kutta DG scheme with DoD stabilization.

\paragraph{Parameters}
Unless stated otherwise, we solve the compressible Euler equations of gasdynamics with a ratio of specific heats of $\gamma=1.4$. The time step size $\Delta t$ is computed according to \eqref{eq:cfl} with $\mathrm{CFL}=0.4$ and, in the test problems with artificial viscosity, additionally according to \eqref{eq:cflvisc}; both restrictions only depend on the size of the background cells. This is indeed the key feature of the present approach, since it resolves the small cell problem appearing in Cartesian cut cell methods with the aid of a locally implicit treatment of the cut cells and their direct neighbors via a space-time DG scheme, while the explicit ADER-DG method is used everywhere else.  
The nonlinear system of Step~2 is solved with the inexact Newton--Krylov method described in \cref{sec:implementation}, where the linear systems are solved with the aid of a GMRES solver with a simple block Jacobi preconditioner. Throughout all test problems we use a Newton tolerance of $10^{-8}$, if not stated otherwise, i.e.\ the Newton iteration is stopped as soon as the Euclidean norm of the residual of the nonlinear system is smaller than $10^{-8}$. 
For each mesh, we report the minimal volume fraction $\alpha_{\min}=\min_{E\in\Mcut}\alpha_E$ appearing in the computational domain. As in \cref{sec:setting}, $\widehat\Omega$ denotes the rectangle covered by the Cartesian background mesh and $\Omega \subset \widehat\Omega$ the actual computational domain.

\subsection{Smooth problem in one dimension}\label{sec:num-1d}

\subsubsection{Convergence study}\label{sec:num-1d-conv}

We first examine the accuracy of the mixed scheme in one space dimension for polynomial degrees $N=1,2,3$ in space and time, using the linear advection equation
\begin{eqnarray}
  \partial_t u + a \, \partial_x u = 0, \qquad x \in [-1,1], \quad t \in (0,T], \qquad u(x,0) = u_0(x) = \sin(2 \pi x),
  \label{eq:advection1d}
\end{eqnarray}
with $a = 1$ and periodic boundary conditions, whose exact solution is $u(x,t) = \sin(2 \pi (x - a t))$, up to the final time $T = 2$. Following \cite{MayLaakmann2024}, the cut cell mesh is constructed as follows (see \cref{fig:mesh1d}): the background mesh consists of $L$ blocks of $K = 20$ cells each, i.e.\ of $I_{\max} = K L$ cells of size $h = 2/I_{\max}$, and in every block the 10th cell is split into a small cut cell of size $\alpha h$ and a neighbor of size $(1-\alpha) h$, with $\alpha = 10^{-4}$. The implicit region consists of the small cut cell and of its two neighbors, namely the background cell on its left and the cell of size $(1-\alpha) h$ on its right, which are treated as transition cells. Under refinement, $h$ is halved and the number of blocks $L$, and therefore the number of cut cells, is doubled; we use $L = 2, 4, 8, 16$, i.e.\ $I_{\max} = 40, 80, 160, 320$ background cells. In one space dimension, all implicit cells, including the cut cells, use the nodal tensor-product space-time basis and the formulation \eqref{eq:stdg}.

We compare three schemes: (i) the ADER-DG scheme on the equidistant mesh with $I_{\max}$ cells of size $h$, (ii) the space-time DG scheme, applied in all cells of the same equidistant mesh, and (iii) ADER-ZIST on the cut cell mesh with $I_{\max} + L$ cells. All three schemes use the same time step $\Delta t = \mathrm{CFL}/(2N+1) \, h/a$ with $\mathrm{CFL} = 0.4$, see \eqref{eq:cfl}, which does not depend on $\alpha$, and the $L^2$ projection $u_h^0 = P_h u_0$ of the initial condition onto $V_h$ as initial data. The implicit residual is evaluated in reference coordinates and contains terms scaled by $\Delta t/h_E$, where $h_E$ is the cell size. To account for round-off amplification on small cells, we stop the Newton iteration when the Euclidean residual norm is below $10^{-13}\max(1,\Delta t/h_{\min})$, where $h_{\min}$ is the smallest implicit cell size. This gives a tolerance of $10^{-13}$ for the fully implicit scheme on the equidistant mesh and approximately $5.7\cdot10^{-11}$ to $1.3\cdot10^{-10}$ for ADER-ZIST. We measure the one-step error after a single time step, $u_h^1 - u(\cdot,\Delta t)$, and the error at the final time, $u_h^n - u(\cdot,T)$ with $t^n = T$, both with respect to the exact solution, in the $L^1$ and in the $L^\infty$ norm.

\begin{figure}[tb]
  \centering
  \input{mesh_1d.tex}
  \caption{One-dimensional cut cell meshes used in \cref{sec:num-1d}: each
    block of 20 background cells contains one small cut cell (red). Under
    refinement, the number of blocks $L$ is doubled.}
  \label{fig:mesh1d}
\end{figure}

\begin{table}[tbp]
  \centering
  \caption{One-dimensional linear advection \eqref{eq:advection1d}: $L^1$ and $L^\infty$ errors with respect to the exact solution after one time step, $u_h^1 - u(\cdot,\Delta t)$, and at the final time $T=2$, $u_h^n - u(\cdot,T)$, and observed orders of convergence; $I_{\max}$ = number of background cells.}
  \label{tab:1d-errors}
  \begin{footnotesize}
  \input{onedim_errors.tex}
  \end{footnotesize}
\end{table}

\Cref{tab:1d-errors} shows convergence consistent with the nominal order $N+1$ in both norms, after one time step and at the final time. ADER-ZIST on the cut cell mesh has errors comparable to those of the two unmixed schemes on the equidistant mesh. Its $L^\infty$ convergence orders are less regular, since the maximum error occurs in an explicit cell or in a transition cell, depending on the mesh. No additional loss of convergence order is observed in this test. In the following, we will examine the question of a potential transition error in more detail.

\subsubsection{Coupling of the explicit and the implicit scheme}\label{sec:num-1d-coupling}
For the MUSCL--Trap FV scheme studied in \cite{MayLaakmann2024}, the explicit--implicit transition reduces the local one-step order, although second-order convergence at fixed final time is retained in one dimension. 
Here, the \(L^\infty\) one-step errors in \cref{tab:1d-errors} show no indication of a coupling-induced reduction in convergence order. To investigate whether the coupling nevertheless contributes to the leading local error, we advance the ADER-DG scheme (ADER) and ADER-ZIST by one step from identical initial data and measure their solution difference directly, as well as their individual errors with respect to the semi-discrete reference solution.

We use the advection equation and periodic boundary conditions of \eqref{eq:advection1d}, with initial data specified below.
We employ an equidistant mesh with cells $E_j=[-1+jh,-1+(j+1)h]$, $j=0,\ldots,I_{\max}-1$, where $h=2/I_{\max}$ and $I_{\max}=40,80,160,320,640$. 
In ADER-ZIST, only $E_{m-3},E_{m-2},E_{m-1}$, with $m=I_{\max}/2$, are implicit. As in \cref{sec:num-1d-conv}, all three implicit cells use the nodal tensor-product space-time basis and formulation \eqref{eq:stdg}. 
The implicit region ends at $x=0$, and $E_{\mathrm{out}}=E_m=[0,h]$ is its first explicit downstream neighbor. No small cell is present. 
For $N=1,2,3$, both schemes use degree $N$ in space and time and the time step $\Delta t=\mathrm{CFL}/(2N+1)\,h/a$ with the same $\mathrm{CFL}=0.4$ as in \cref{sec:num-1d-conv}, where $h$ is the size of the equidistant cells. The Newton tolerance for ADER-ZIST is $10^{-14}$. 
We perform separate runs with initial data
$u_{0,c}(x)=\cos(2\pi x)$ and $u_{0,s}(x)=\sin(2\pi x)$.
Before comparing the schemes, we evolve the $L^2$ projections of the initial data
with the semi-discrete upwind DG operator $\mathcal{L}_h$,
allowing rapidly decaying initial transients to subside.
For each run, both schemes then start from the same state
\begin{eqnarray}
  w_{h,\sigma} = \exp(t_0 \mathcal{L}_h)\,P_h u_{0,\sigma},
  \qquad t_0=0.25, \qquad \sigma\in\{c,s\},
  \label{eq:coupling-data}
\end{eqnarray}
which, together with the semi-discrete reference
$w_{h,\sigma}^{\mathrm{next}}
=\exp(\Delta t\,\mathcal{L}_h)\,w_{h,\sigma}$,
is computed using the matrix exponential of the Fourier symbol
of $\mathcal{L}_h$.

Let $U_{S,\sigma}$ denote the solution of scheme
$S\in\{\mathrm{ADER},\mathrm{ADER\text{-}ZIST}\}$
after one step from $t_0$ to $t_0+\Delta t$, and define
$d_\sigma=U_{\mathrm{ADER\text{-}ZIST},\sigma}
-U_{\mathrm{ADER},\sigma}$.
In the first explicit cell $E_{\text{out}}$ downstream of the implicit region, we measure
\begin{eqnarray}
  D &=& \max_{x\in E_{\rm out}}
    \sqrt{d_c(x)^2+d_s(x)^2},
  \label{eq:coupling-D} \\
  e_S &=& \max_{x\in E_{\rm out}}
    \sqrt{
      \left(U_{S,c}(x)-w_{h,c}^{\mathrm{next}}(x)\right)^2
      +
      \left(U_{S,s}(x)-w_{h,s}^{\mathrm{next}}(x)\right)^2
    }.
  \label{eq:coupling-e}
\end{eqnarray}
Combining the two runs avoids masking the leading error through
cancellation for a particular phase at the interface.
The maximum is computed exactly, up to round-off, by evaluating the squared modulus at the two endpoints of $E_{\mathrm{out}}$ (one-sided values) and at the real roots of its derivative inside $E_{\mathrm{out}}$.
Thus, $D$ measures the solution difference directly, while
$e_S$ measures each scheme's error with respect to the
same semi-discrete reference.
Since both schemes start from identical data and \(a>0\), their updates in \(E_{\mathrm{out}}\) differ only in the inflow flux at \(x=0\). The solution difference in this cell therefore directly measures the local effect of the explicit--implicit coupling.

\Cref{tab:1d-coupling} shows that, under mesh refinement with this time step, i.e.\ $\mathrm{CFL}=0.4$, $D$, $e_{\mathrm{ADER}}$ and $e_{\mathrm{ADER\text{-}ZIST}}$ converge with order $N+1$. For $N=1,2,3$, respectively, $D/h^{N+1}$ approaches approximately $0.41$, $0.28$ and $0.14$, while $D/e_{\mathrm{ADER}}$ approaches $0.39$, $0.48$ and $0.22$. 
The coupling therefore changes the leading local error, but does
not reduce the one-step convergence order in this test.
Depending on $N$, it can either increase or decrease the error.
Together with the final-time results in \cref{tab:1d-errors},
these observations show that the nominal order $N+1$ is retained
in the present one-dimensional tests despite a nonzero local
coupling contribution.
\begin{remark}[Comparison with mixed explicit-implicit FV schemes]
Among the improved mixed FV schemes proposed in \cite{MayLaakmann2024} to reduce the transition error of the original MUSCL--Trap scheme of \cite{MayBerger2017}, MPRKC--Trap is closely related to ADER-ZIST. It combines the implicit trapezoidal rule with the explicit trapezoidal rule, whose first stage is computed with the MUSCL scheme. Both the explicit and the implicit scheme thus share the trapezoidal rule as a common outer shell and differ only in the state at $t^{n+1}$: the unknown new solution in the implicit scheme, a sufficiently accurate predictor in the explicit one. As a result, the switch between the schemes no longer reduced the order of the one-step error. In ADER-ZIST, the weak formulation of the PDE in space-time plays the role of this outer shell, and the two schemes differ only in the polynomial that enters the flux, $\mathbf{p}_h$ or $\mathbf{q}_h$. Unlike MPRKC--Trap, whose two-stage explicit scheme required a larger implicit zone, the element-local ADER predictor does not enlarge the implicit region. 
\end{remark}

\begin{table}[tbp]
  \centering
  \caption{One-dimensional linear advection \eqref{eq:advection1d}: contribution $D$ of the coupling and one-step errors $e_{\rm ADER}$ and $e_{\rm ADER\text{-}ZIST}$ with respect to the semi-discrete solution in the first explicit cell $E_{\rm out}$ downstream of the implicit region, see \eqref{eq:coupling-D} and \eqref{eq:coupling-e}, and observed orders of convergence; $I_{\max}$ = number of cells.}
  \label{tab:1d-coupling}
  \begin{footnotesize}
  \input{onedim_coupling.tex}
  \end{footnotesize}
\end{table}

\FloatBarrier %
\subsection{Smooth problems in two dimensions}\label{sec:num-smooth}

We now check the accuracy of ADER-ZIST in two space dimensions. We start with a single and straight embedded boundary, to investigate the coupling between the explicit ADER-DG scheme and the implicit space-time DG cells. Subsequently, we consider more complex geometries including curved and even non-convex boundaries.

\subsubsection{Advection of a smooth density profile along a ramp}\label{sec:num-ramp}

In this first two-dimensional test problem we consider the advection of a smooth density profile along a ramp that is not aligned with the Cartesian background mesh. The background mesh covers $\widehat\Omega = [-1,1]^2$, and the computational domain $\Omega$ is the part of $\widehat\Omega$ above a straight ramp that passes through the
points $\mathbf{x}_1 = (-1,-0.56403)$ and $\mathbf{x}_2 = (1,0.59597)$, i.e.\ along the line $y = 0.58\,x + 0.01597$, so that it forms an angle of roughly $30^\circ$ with respect to the $x$-axis, see  \cref{fig:ramp}. On the ramp, slip wall boundary conditions have been imposed, i.e.\ for the unknown outer state that is needed for the Riemann solver at the boundary we set the same density, pressure and tangential velocity as inside the domain, while the sign of the normal velocity is flipped. We choose a constant velocity parallel to the ramp and a constant pressure. In this case, with suitable boundary conditions, the exact solution of the compressible Euler equations reduces to that of a linear scalar advection equation for the fluid density, and any density profile that is transported parallel to the ramp is an
exact solution. In particular, we choose the following time-dependent solution, which serves both as initial condition and as Dirichlet boundary condition on all other boundaries of the domain: 
\begin{eqnarray}
	\rho(\mathbf{x},t) &=& 1 + \frac{1}{10} \sin^2 \left( 2 \pi (x - u \, t) \right) + \frac{1}{10} \cos^2 \left( 2 \pi (y - v \, t) \right), \label{eqn.ramp.rho} \\
	\mathbf{v}(\mathbf{x},t) &=& \left( \begin{array}{c} u \\ v \end{array} \right) = \frac{\mathbf{x}_2 - \mathbf{x}_1}{\| \mathbf{x}_2 - \mathbf{x}_1 \|} = \frac{1}{\sqrt{1+0.58^2}} \left( \begin{array}{c} 1 \\ 0.58 \end{array} \right), 
	\qquad 
	p(\mathbf{x},t) = 1. \label{eqn.ramp.vp}
\end{eqnarray}
Simulations are run until a final time of $T=0.02$. 

\begin{figure}[tb]
  \centering
  \begin{subfigure}[t]{0.48\textwidth}\centering
    \includegraphics[width=\textwidth]{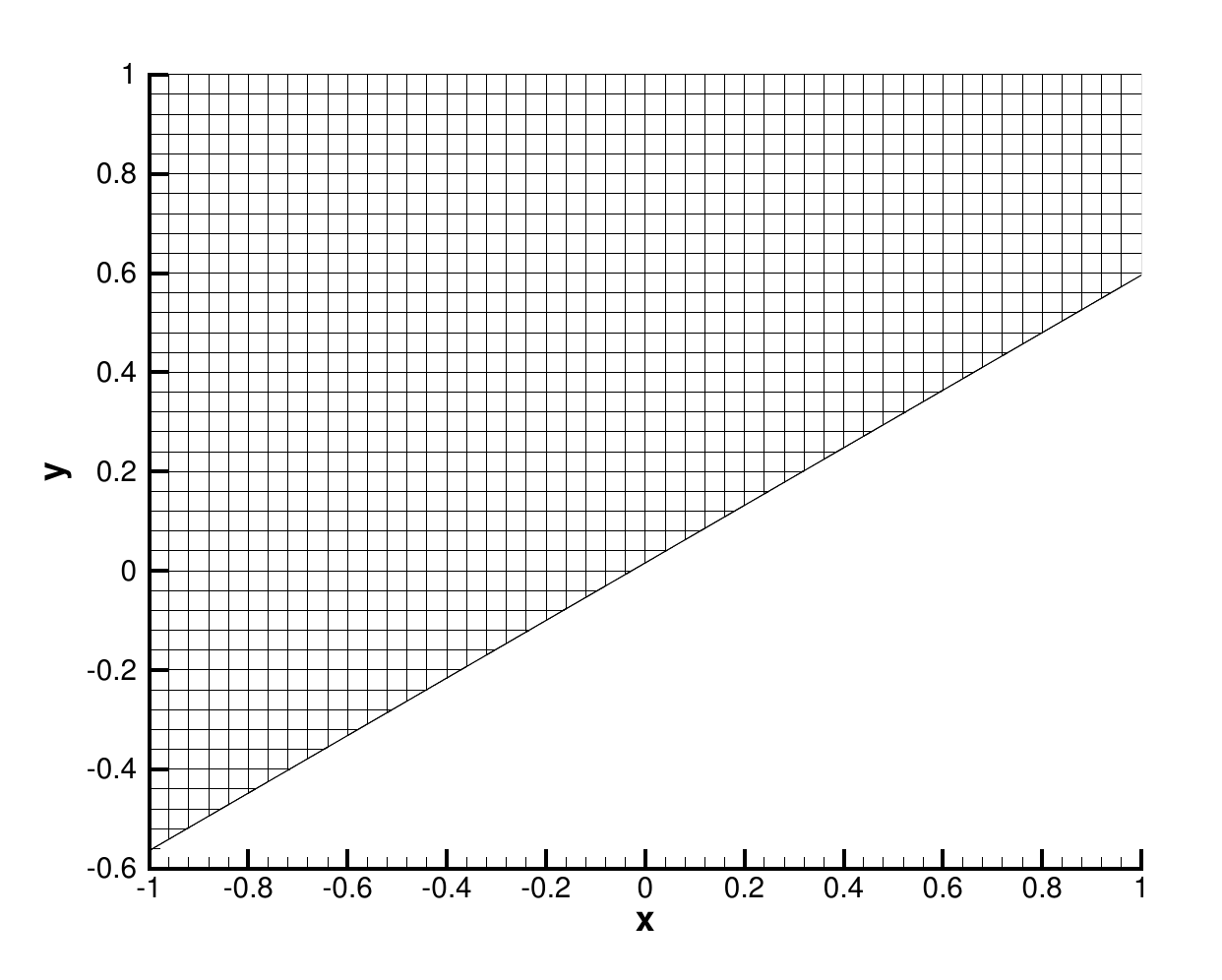}
    \par\vspace{11.01pt}%
    \caption{Mesh, $N=3$, $50\times 50$ cells.}
  \end{subfigure}\hfill
  \begin{subfigure}[t]{0.48\textwidth}\centering
    \raisebox{-11.01pt}{\includegraphics[trim=10 10 10 10,clip,width=0.96064\textwidth]{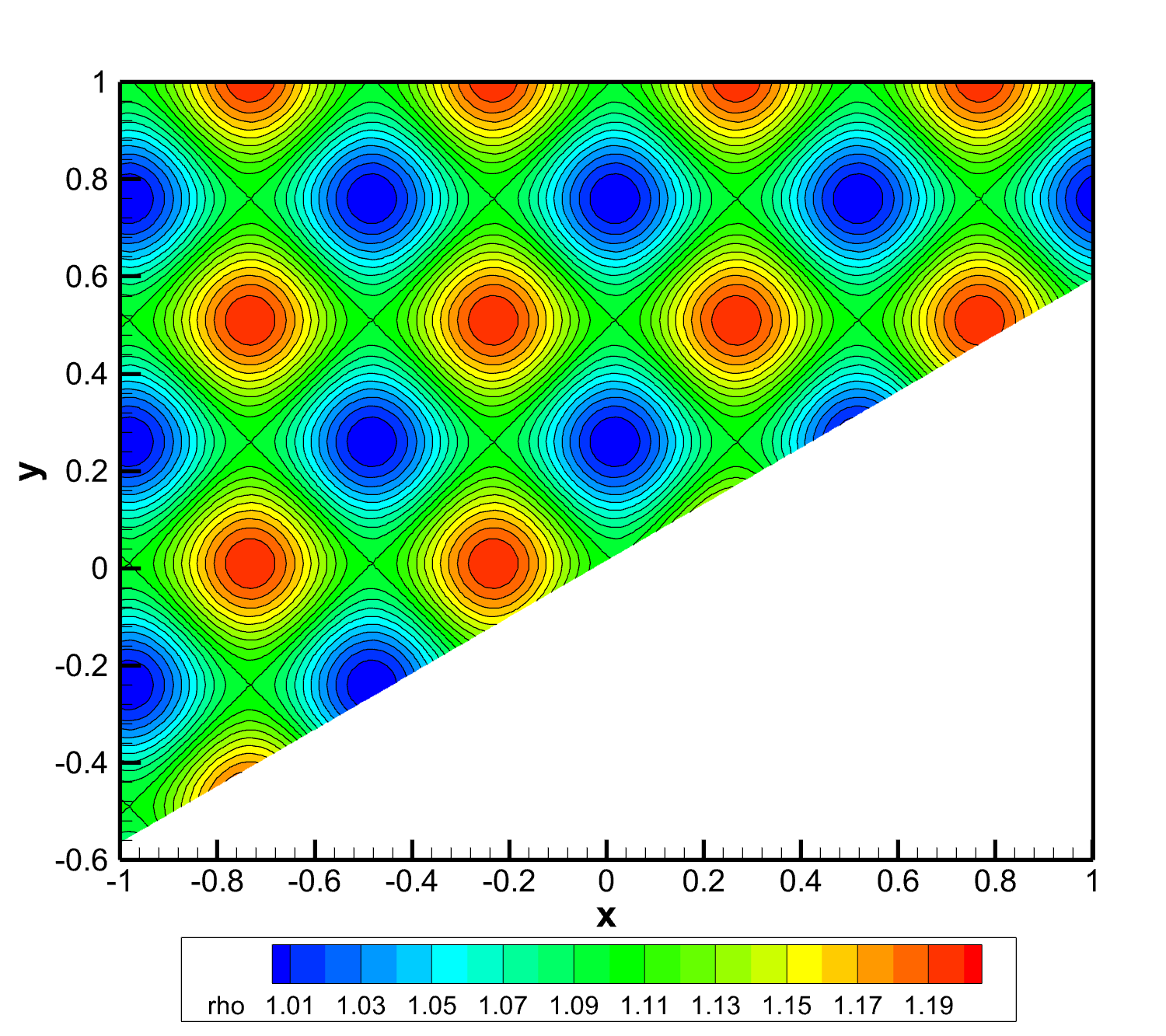}}
    \par
    \caption{Density $\rho$.}
  \end{subfigure}
  \caption{Advection along a ramp: cut cell mesh (left) and density 
  	contour colors (right).}
  \label{fig:ramp}
\end{figure}

The results of the convergence study are reported in \cref{tab:ramp}. Since, in the context of Cartesian cut cells, mesh refinement does not maintain the same mesh regularity / mesh quality over all meshes, the observed orders between two successive meshes are rather irregular, in particular in the $L^\infty$ norm. This can also be seen in the smallest volume fraction $\alpha_{\min}$, which varies between $7.76 \cdot 10^{-8}$ and $5.54 \cdot 10^{-3}$ on the meshes under consideration. We have therefore carried out the computations also on a denser sequence of meshes, namely $I_{\max} = 20, 30, \ldots, 150$ for $N=2,3$, $I_{\max} = 10, 20, \ldots, 100$ together with $I_{\max} = 25, 75$ for $N=4$, and $I_{\max} = 10, 15, \ldots, 60$ for $N=5$, and we have computed the convergence rates by a least-squares fit of the logarithm of the error against the logarithm of the mesh size $h = 2/I_{\max}$ through all meshes of each polynomial degree. The errors, the fitted lines and the fitted convergence rates are shown in \cref{fig:ramp-conv}, together with the slopes of the nominal convergence order $N+1$. The number of time steps grows linearly with $I_{\max}$ and is not affected by $\alpha_{\min}$, as expected.

\begin{table}[tb]
  \centering
  \caption{Advection along a ramp: density errors in $L^1$, $L^2$ and $L^\infty$ norm, observed orders, minimal volume fraction and number of time steps.}
  \label{tab:ramp}
  \begin{footnotesize}
  \input{ramp.tex}
  \end{footnotesize}
\end{table}

\begin{figure}[tb]
  \centering
  \begin{subfigure}[t]{0.32\textwidth}\centering
    \input{ramp_conv_L1.tex}
    \caption{$L^1$ norm.}
  \end{subfigure}\hfill
  \begin{subfigure}[t]{0.32\textwidth}\centering
    \input{ramp_conv_L2.tex}
    \caption{$L^2$ norm.}
  \end{subfigure}\hfill
  \begin{subfigure}[t]{0.32\textwidth}\centering
    \input{ramp_conv_Linf.tex}
    \caption{$L^\infty$ norm.}
  \end{subfigure}
  \caption{Advection of a density profile along a ramp: errors of the density $\rho$ at $T=0.02$ as a function of the mesh size $h = 2/I_{\max}$ (markers), least-squares fits through all meshes of each polynomial degree $N$ (solid lines, fitted rate in the legend) and reference lines of slope $N+1$ (dashed lines).}
  \label{fig:ramp-conv}
\end{figure}

\FloatBarrier %
\subsubsection{Isentropic vortex around a cylinder}\label{sec:num-vortex}

In this example we consider an isentropic vortex centered at the origin of the domain $\widehat\Omega = [-2,2]^2$, from which a circular cylinder of radius $R=0.5$ has been cut out in order to obtain the computational domain $\Omega$. 
The setup follows \cite{HuShuTri}, but with the center of the vortex now at the origin and with no uniform background velocity field
superimposed. The initial condition, which is also the exact solution of this problem, reads
\begin{eqnarray}
	\rho(\mathbf{x},t) &=& \left( 1 - \frac{(\gamma-1) \, \Sigma^2}{2 \gamma} \, e^{1-r^2} \right)^{\frac{1}{\gamma-1}}, \qquad p(\mathbf{x},t) = \rho^\gamma(\mathbf{x},t), \label{eqn.vortex.rhop} \\
	\mathbf{v}(\mathbf{x},t) &=& \left( \begin{array}{c} u \\ v \end{array} \right) = \Sigma \, e^{\frac{1}{2}(1-r^2)} \left( \begin{array}{c} -y \\ x \end{array} \right), \label{eqn.vortex.v}
\end{eqnarray}
with the square of the radial coordinate $r^2 = x^2 + y^2$ and the vortex strength $\Sigma = 5/(2\pi)$. 
Since in this test the streamlines are circular, the flow is an exact solution of the compressible Euler equations also in the presence of the circular wall boundary. We run the simulation up to a final time of $T=0.1$ on a sequence of successively refined meshes. \Cref{fig:vortex} shows the cut cell mesh with $80\times80$ background cells and the density at the final time obtained with $N=3$. The curved geometry of the cylinder is approximated by a cubic spline using 1024 points on the circle. The space-time DG scheme then treats the curved boundary via standard isoparametric elements on the triangular subgrid of the cut cells, exactly as described in \cref{sec:implementation}.

\begin{figure}[tb]
  \centering
  \begin{subfigure}[t]{0.48\textwidth}\centering
    \includegraphics[width=0.95371\textwidth]{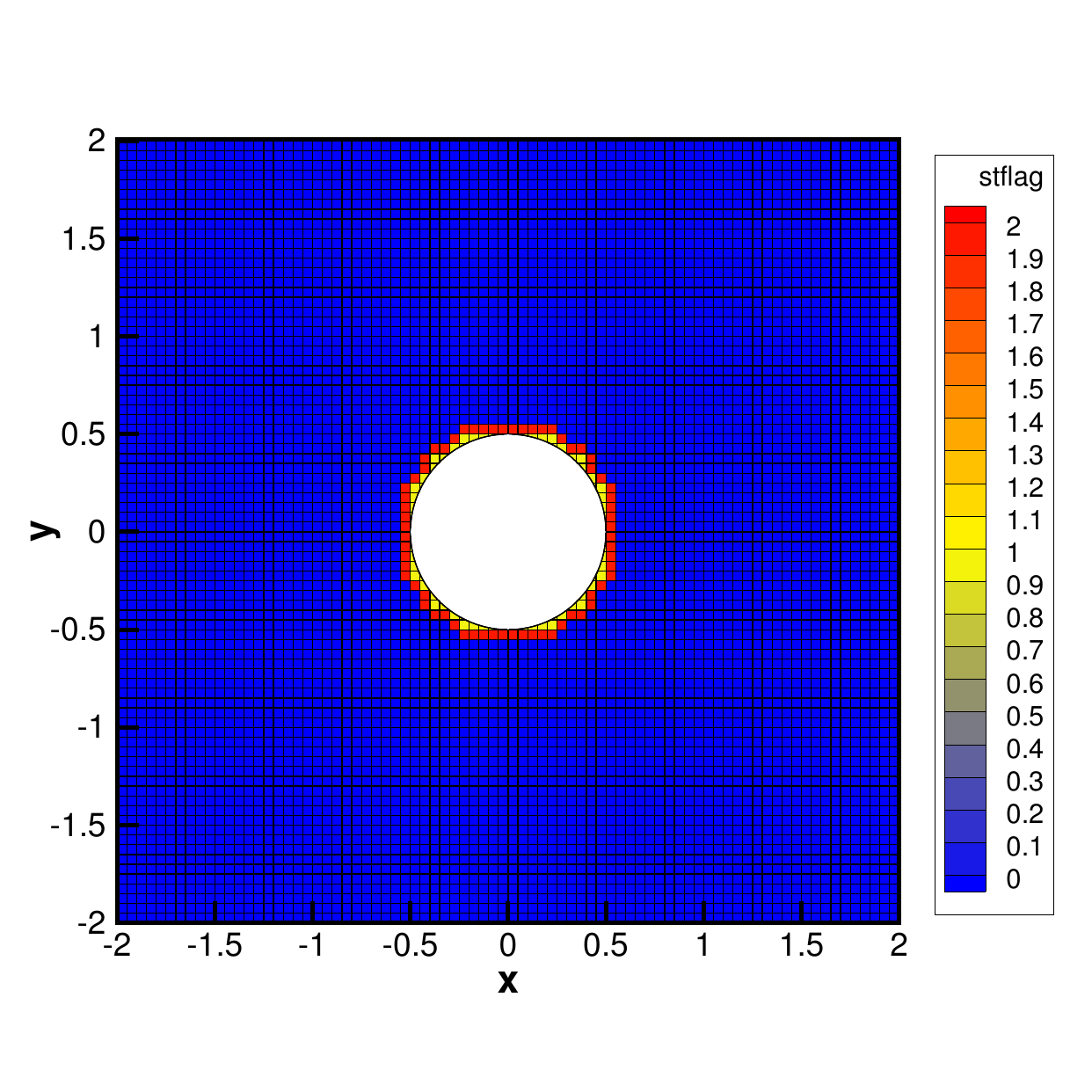}
    \par
    \caption{Cut cell mesh with $80\times80$ background cells and color codes of the cell types (flag array, see \cref{sec:classification}): 0 explicit cell, 1 cut cell, 2 transition cell.}
  \end{subfigure}\hfill
  \begin{subfigure}[t]{0.48\textwidth}\centering
    \raisebox{10.54pt}{\includegraphics[trim=10 10 10 10,clip,width=0.98667\textwidth]{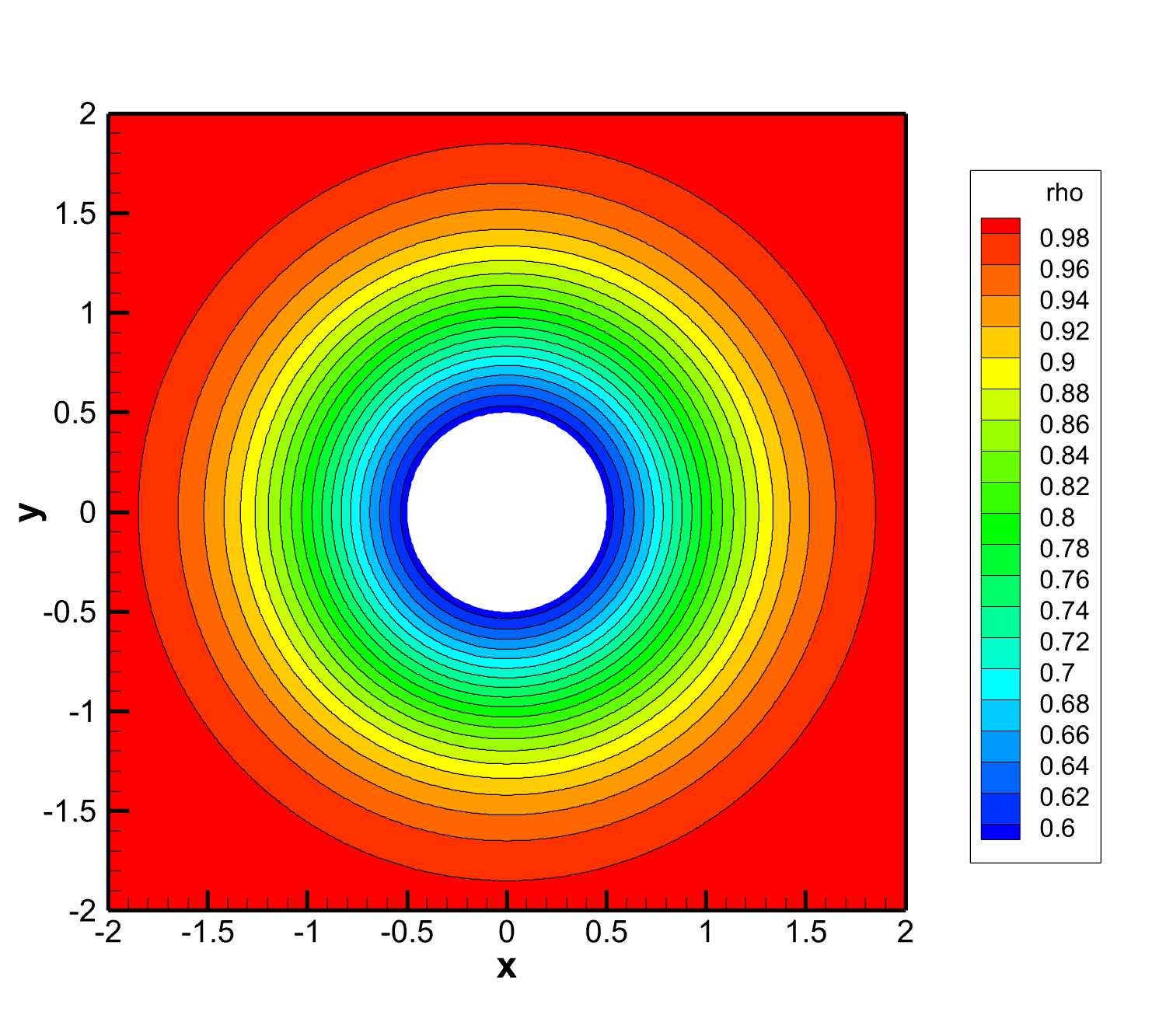}}
    \par
    \caption{Density $\rho$ at the final time obtained with $N=3$.}
  \end{subfigure}
  \caption{Isentropic vortex around a circular cylinder.}
  \label{fig:vortex}
\end{figure}

The obtained numerical results are shown in \cref{tab:vortex}. In the $L^1$ norm, the observed convergence rates are close to the nominal order $N+1$, while the $L^2$ and $L^\infty$ errors show a more irregular behavior. The minimum volume fraction lies in the range $1.07 \cdot 10^{-3} \leq \alpha_{\min} \leq 2.55 \cdot 10^{-1}$.

\begin{table}[tb]
  \centering
  \caption{Isentropic vortex around a cylinder: density errors, observed orders,
    minimal volume fraction and number of time steps.}
  \label{tab:vortex}
  \begin{footnotesize}
  \input{vortex.tex}
  \end{footnotesize}
\end{table}

\FloatBarrier %
\subsubsection{Ringleb flow}\label{sec:num-ringleb}

The Ringleb flow \cite{Ringleb1940} is one of the rare examples of a smooth, steady, transonic solution of the Euler equations for which an exact solution in closed form is available. Here we use the setup as described in 
\cite{DumbserKaeserTitarevToro2007}. To make the paper self-contained, we provide a summary of the details of the flow. 
 
The flow is constructed in the hodograph plane $(V,\theta)$, where $V = \| \mathbf{v} \|$ is the magnitude of the velocity and $\theta$ is the angle of the velocity vector with respect to the $x$-axis. In this plane, the stream function is simply given by $\Psi = \sin\theta / V$, hence the streamlines are the level sets $\Psi = 1/k$ of the stream function, and along the streamline with parameter $k$ one has $\sin\theta = V/k$ with $0 < V \leq k$. With the stagnation values $\rho_0 = 1$, $a_0 = 1$ and $p_0 = 1/\gamma$ used for the non-dimensionalization, the sound speed $a$, the density and the pressure depend only on $V$,
\begin{equation}
	a = \sqrt{1 - \frac{\gamma-1}{2} V^2}, \qquad \rho = a^{\frac{2}{\gamma-1}}, \qquad p = \frac{\rho^\gamma}{\gamma}, \label{eqn.ringleb.arhop}
\end{equation}
and the point in the physical $x$--$y$ plane that belongs to the streamline $k$ and to the speed $V$ is given by
\begin{eqnarray}
	x &=& \frac{1}{2 \rho} \left( \frac{1}{V^2} - \frac{2}{k^2} \right) + \frac{J}{2}, \qquad y = \pm \frac{1}{k \rho V} \sqrt{1 - \frac{V^2}{k^2}}, \label{eqn.ringleb.xy} \\
	J &=& \frac{1}{a} + \frac{1}{3a^3} + \frac{1}{5a^5} - \frac{1}{2} \ln \frac{1+a}{1-a}. \label{eqn.ringleb.J}
\end{eqnarray}
The velocity vector at this point is given by 
\begin{equation}
	\mathbf{v} = \left( \begin{array}{c} u \\ v \end{array} \right) = \left( \begin{array}{c} \operatorname{sgn}(y) \, V \sqrt{1 - V^2/k^2} \\ V^2/k \end{array} \right), \label{eqn.ringleb.v}
\end{equation}
i.e.\ the flow enters the channel through its lower boundary and leaves it through its upper boundary. Since \eqref{eqn.ringleb.xy} provides the physical coordinates as a function of $V$ and $k$, and not \textit{vice versa}, the exact solution at a given point $(x,y)$ has to be computed by an inversion. For this purpose we use the fact that the lines of constant speed (isotachs) are the circles
\begin{equation}
	\left( x - \frac{J}{2} \right)^2 + y^2 = \frac{1}{4 \rho^2 V^4}, \label{eqn.ringleb.isotach}
\end{equation}
so that first the scalar nonlinear equation \eqref{eqn.ringleb.isotach}, in which $\rho$ and $J$ depend only on $V$, is solved numerically for $V$, then $k = \sqrt{2} \left( V^{-2} - 2 \rho \, (x - J/2) \right)^{-1/2}$ follows from the first relation in \eqref{eqn.ringleb.xy}, and the state is finally obtained from \eqref{eqn.ringleb.arhop} and \eqref{eqn.ringleb.v}.

As in \cite{DumbserKaeserTitarevToro2007}, we set $\gamma = 1.4$, and the two solid walls of the channel are the streamlines $k_1 = 0.7$ and $k_2 = 1.2$, each of which is taken from the point where $V = 0.5$ on the lower branch ($y<0$) through the nose ($V = k$, $y = 0$) up to the point where $V = 0.5$ on the upper branch ($y>0$). Different from \cite{DumbserKaeserTitarevToro2007}, where the channel is closed by the corresponding arcs of the isotach $V = 0.5$, here the channel is closed at the bottom and at the top by the straight segments that connect the end points of the two walls with each other, see \cref{fig:ringleb}. On the two curved walls we impose slip wall boundary conditions, while on the two straight segments on the top and on the bottom the exact solution is prescribed. Each of the two curved walls is approximated by a cubic spline through 801 points that are placed in an equidistant manner in terms of the flow angle $\theta \in [\theta_{\min}, \pi - \theta_{\min}]$, with $\sin \theta_{\min} = 0.5/k$. The Cartesian background mesh covers the rectangle $\widehat\Omega = [-1.5636, 1.4364] \times [-2.3522, 2.6478]$ with $I_{\max} \times J_{\max}$ cells, where $I_{\max} = \operatorname{round}(3/h)$ and $J_{\max} = \operatorname{round}(5/h)$ for the mesh sizes $h$ reported in \cref{tab:ringleb}. The computational domain $\Omega$, i.e.\ the channel between the two walls, is non-convex, which makes this a demanding test for the cut cell geometry and for the accuracy of the curved boundary representation. The exact solution serves as initial condition, and we run the simulations up to a final time of $T=0.01$.

\begin{figure}[tb]
  \centering
  \begin{subfigure}[t]{0.48\textwidth}\centering
    \includegraphics[width=0.86761\textwidth]{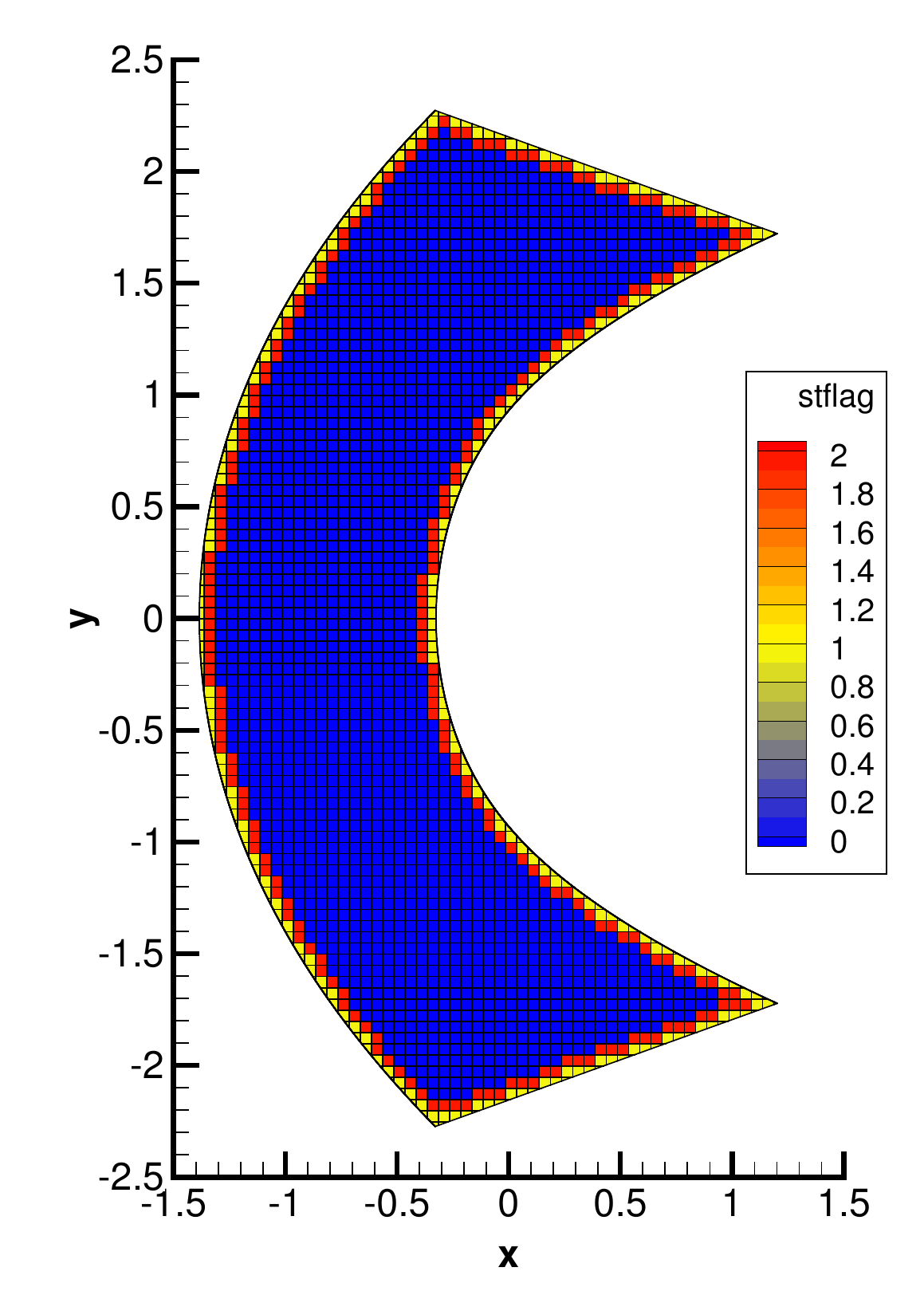}
    \par
    \caption{Cartesian cut cell mesh for the Ringleb flow and color codes of the cell types (flag array, see \cref{sec:classification}): 0 explicit cell, 1 cut cell, 2 transition cell.}
  \end{subfigure}\hfill
  \begin{subfigure}[t]{0.48\textwidth}\centering
    \raisebox{1.72pt}{\includegraphics[trim=10 10 10 10,clip,width=0.85604\textwidth]{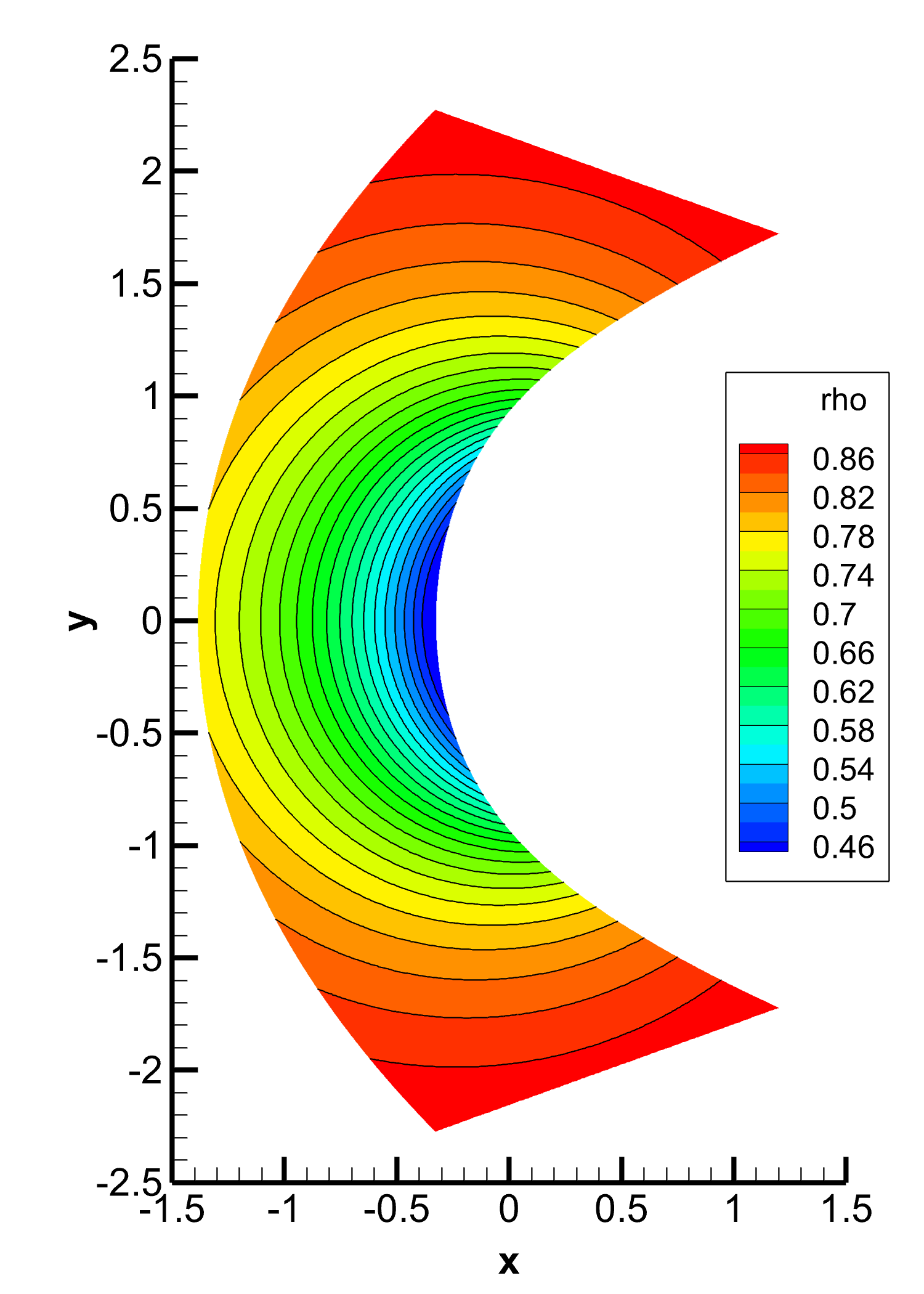}}
    \par
    \caption{Density contour colors at the final time.}
  \end{subfigure}
  \caption{Ringleb flow: non-convex computational domain and density.}
  \label{fig:ringleb}
\end{figure}

For this challenging test case, the observed convergence rates are close to the expected order $N+1$ in all norms, apart from the $L^\infty$ norm in the case $N=5$. These results confirm that the curved boundary is represented properly and accurately and that ADER-ZIST is also able to handle this challenging non-convex domain. We note that the Ringleb flow problem has already been solved on Cartesian cut cells with high order DG schemes, see \cite{Giuliani2022}, where a state redistribution method has been proposed. 

\begin{table}[tb]
  \centering
  \caption{Ringleb flow: density errors, observed orders, minimal volume
    fraction and number of time steps.}
  \label{tab:ringleb}
  \begin{footnotesize}
  \input{ringleb.tex}
  \end{footnotesize}
\end{table}

\FloatBarrier %
\subsubsection{Supersonic vortex}\label{sec:num-supersonic}

Here we consider the supersonic vortex test, see e.g. 
\cite{BergerGiuliani2021}, which consists of a supersonic flow between two concentric circular arcs with radii $r_i=1$ and $r_o=1.384$ in the first quadrant. The exact solution of this stationary problem reads 
\begin{equation}\label{eq:supersonic}
  \rho = \Bigl(1 + \frac{\gamma-1}{2}M_i^2\Bigl[1-\Bigl(\frac{r_i}{r}\Bigr)^2\Bigr]\Bigr)^{\frac{1}{\gamma-1}},
  \qquad
  u = M_i\frac{r_i}{r}\sin\theta, \qquad v = -M_i\frac{r_i}{r}\cos\theta, \qquad
  p = \frac{\rho^\gamma}{\gamma},
\end{equation}
with the Mach number at the inner wall set to $M_i=2.25$. The Cartesian background mesh covers $\widehat\Omega = [0,1.43]\times[0,1.4301]$, and the computational domain is $\Omega = \{ \mathbf{x} \in \widehat\Omega : r_i < r < r_o \}$.  On both arcs we impose reflective wall boundary conditions, while the exact solution is prescribed on the straight boundaries. We run the simulations on a sequence of successively refined meshes up to a final time of $T=0.02$.  

\begin{figure}[tb]
  \centering
  \begin{subfigure}[t]{0.48\textwidth}\centering
    \includegraphics[trim=23 141 1 207,clip,width=\textwidth]{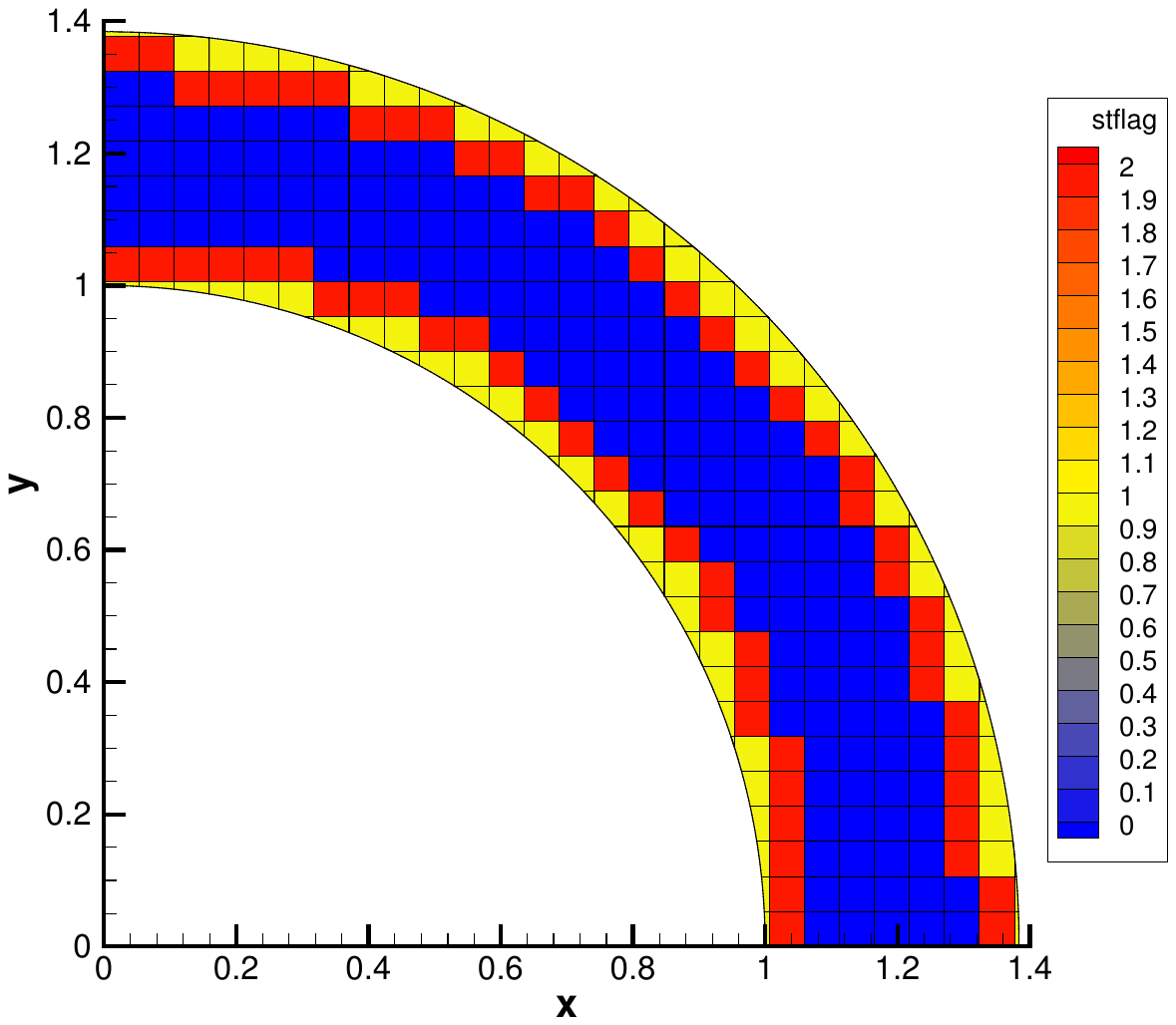}
    \caption{Cut cell mesh with $27\times27$ background cells and color codes of the cell types (flag array, see \cref{sec:classification}): 0 explicit cell, 1 cut cell, 2 transition cell.}
  \end{subfigure}\hfill
  \begin{subfigure}[t]{0.48\textwidth}\centering
    \includegraphics[trim=37 24 27 20,clip,width=0.99970\textwidth]{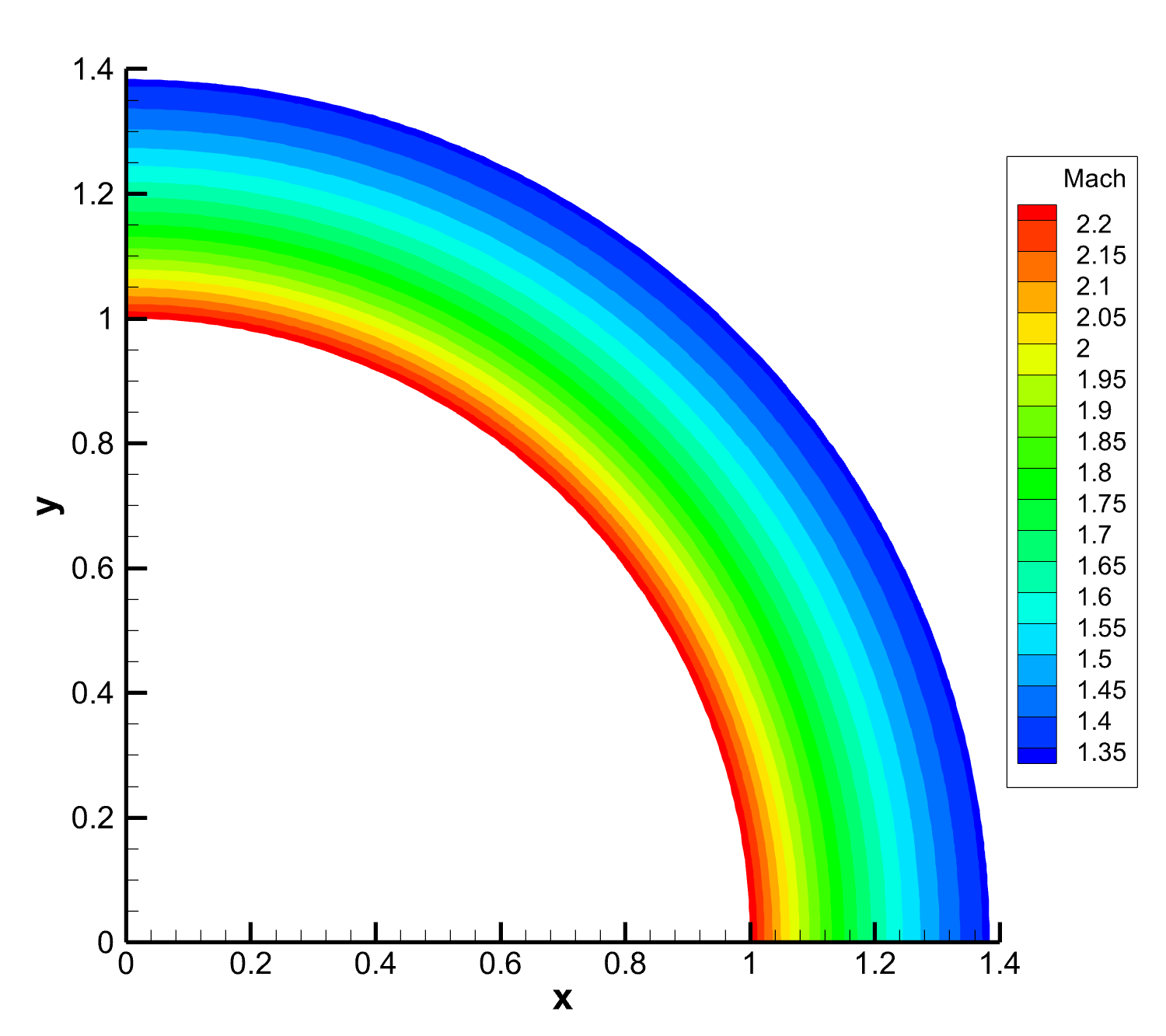}
    \caption{Mach number at the final time obtained with $N=2$.}
  \end{subfigure}
  \caption{Supersonic vortex.}
  \label{fig:supersonic}
\end{figure}

To illustrate the setup of the test case, a coarse cut cell mesh with only $27\times27$ background cells and the Mach number obtained with $N=2$ on this mesh at the final time are shown in \cref{fig:supersonic}. The detailed numerical convergence results are presented in \cref{tab:supersonic}. For $N=3$ the observed convergence rates in the $L^1$ and $L^2$ norm lie between $3.88$ and $4.51$, i.e.\ the nominal order $N+1$ is reached, while for $N=2$ they lie between $2.52$ and $2.62$, below the nominal order but slowly increasing under mesh refinement. For $N=4$ and $N=5$ the observed rates are irregular. The least-squares fits through all meshes give about $5.51$ for $N=4$ and $6.16$ for $N=5$ in the $L^2$ norm, i.e.\ on average the nominal order $N+1$ is reached also here. The $L^\infty$ errors show a more irregular behavior. The minimum volume fraction lies in the range $3.72 \cdot 10^{-6} \leq \alpha_{\min} \leq 9.65 \cdot 10^{-3}$. 

\begin{table}[tb]
  \centering
  \caption{Supersonic vortex: density errors, observed orders, minimal volume
    fraction and number of time steps.}
  \label{tab:supersonic}
  \begin{footnotesize}
  \input{quarterannulus.tex}
  \end{footnotesize}
\end{table}

\FloatBarrier %
\subsubsection{Low Mach number flow around a cylinder}\label{sec:num-potflow}

In this test we use the same geometry as in \cref{sec:num-vortex}, i.e.\ the domain $\widehat\Omega = [-2,2]^2$ from which a circular cylinder of radius $R=0.5$ centered at the origin has been cut out in order to obtain the computational domain $\Omega$. We consider the steady flow around the cylinder at a low free stream Mach number of $M_\infty = 10^{-2}$, similar to the test considered in \cite{DumbserKaeserTitarevToro2007}. As reference solution we use the classical incompressible potential flow around a circular cylinder, which in polar coordinates $(r,\theta)$ and for $\rho = 1$ reads  
\begin{equation}
	v_r = U_\infty \left( 1 - \frac{R^2}{r^2} \right) \cos\theta, \qquad v_\theta = -U_\infty \left( 1 + \frac{R^2}{r^2} \right) \sin\theta,  \qquad p = p_0 + \frac{1}{2} \left( U_\infty^2 - \| \mathbf{v} \|^2 \right), \label{eqn.potflow}
\end{equation}
with the free stream velocity $U_\infty = M_\infty = 10^{-2}$ and the background pressure $p_0 = 1/\gamma$, so that the sound speed of the free stream is unity. The velocity field in \eqref{eqn.potflow} satisfies the no-penetration condition $v_r = 0$ on the wall, and the pressure is given by the Bernoulli equation for steady incompressible flows. Since the velocity perturbations are of the order $M_\infty$ and the pressure perturbations of the order $M_\infty^2$, \eqref{eqn.potflow} is a consistent low Mach number state, but it is not an exact solution of the compressible Euler equations. However, it differs from the exact solution only by terms that vanish in the low Mach number limit. The potential flow solution serves both as initial condition and as reference solution and is prescribed on the outer boundary of the domain, while slip wall boundary conditions are imposed on the cylinder. The curved cylinder is again approximated by a cubic spline through 1024 points on the cylinder wall.

\Cref{fig:potflow} shows the velocity component $u$ and the pressure perturbation $p' = p - p_0$ obtained with $N=3$ on a mesh with $80 \times 80$ cells at a final time of $T=0.5$, together with their distribution along the cylinder wall. The flow is essentially symmetric, it stagnates at the front and at the back of the cylinder and accelerates to $2 U_\infty$ at its top and bottom. The pressure perturbation shows the corresponding pattern, with $p' = \frac{1}{2} U_\infty^2$ at the stagnation points and $p' = -\frac{3}{2} U_\infty^2$ at the top and bottom of the cylinder. The numerical solution on the wall agrees very well with the reference solution. It is well-known that for the solution of this test problem with the aid of high order DG schemes, the use of curved boundaries is mandatory, see \cite{BassiRebay1997,WangLiu2006}.

We now perform a numerical convergence study on a sequence of successively refined meshes solving the problem up to $T=0.1$ with $N=2$. The errors are measured with respect to the reference solution and are reported in \cref{tab:potflow}. For $\rho$, $u$ and $v$ we observe the nominal order $N+1=3$ on all meshes. For the pressure, the order $3$ is obtained only on the first refinement. This can be explained by the fact that the potential flow solution is an exact solution of the incompressible Euler equations and not of the compressible ones. On finer meshes the discretization error reaches the size of the difference between the incompressible reference solution and the exact solution of the compressible Euler equations.

\begin{figure}[tb]
  \centering
  \begin{subfigure}[t]{0.48\textwidth}\centering
    \makebox[\textwidth][l]{\hspace*{5.04pt}\includegraphics[trim=10 10 10 10,clip,width=\textwidth]{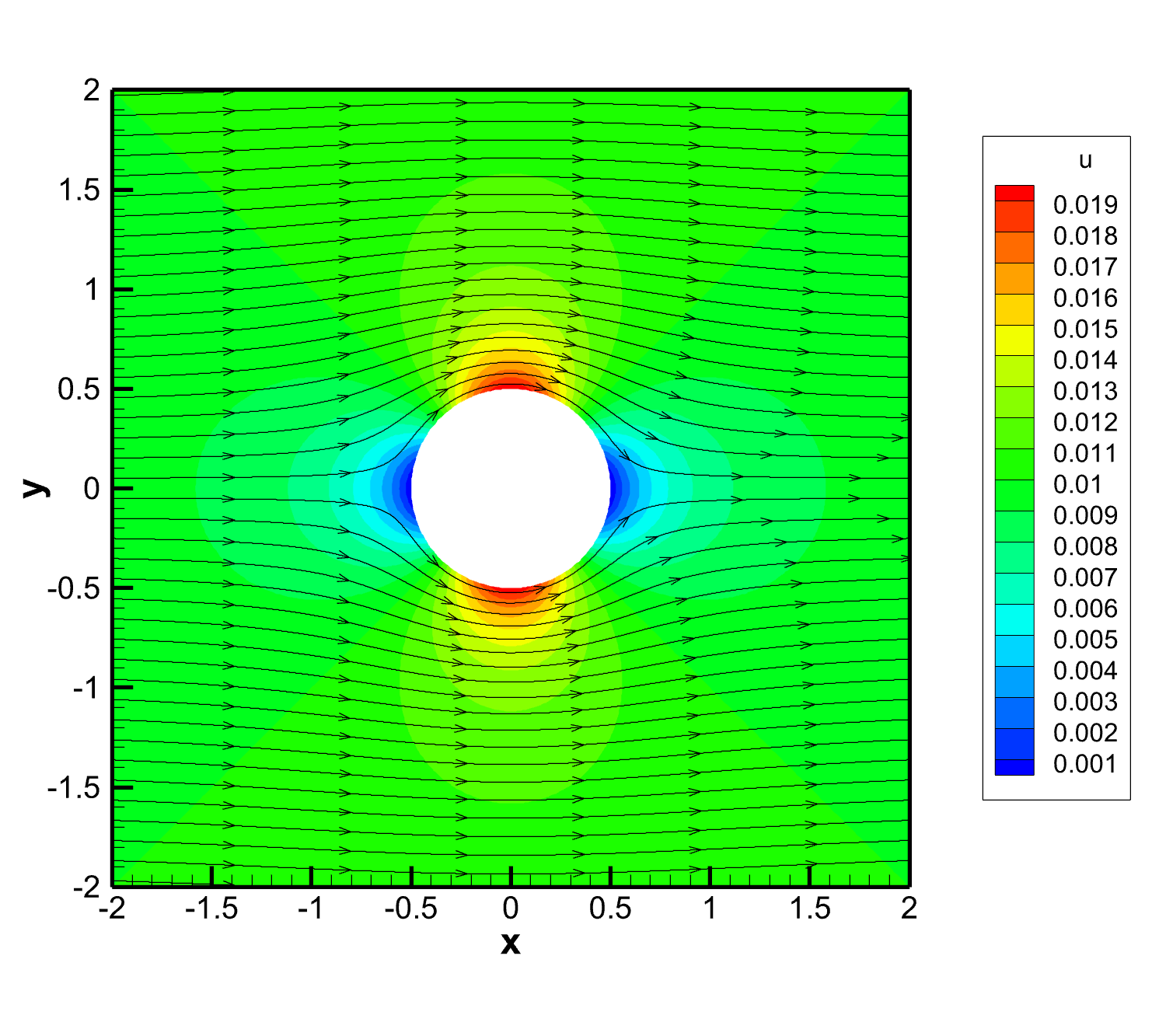}}
    \par
    \caption{Velocity component $u\vphantom{p'}$ and streamlines.}%
  \end{subfigure}\hfill
  \begin{subfigure}[t]{0.48\textwidth}\centering
    \makebox[\textwidth][l]{\hspace*{5.04pt}\includegraphics[trim=10 10 10 10,clip,width=\textwidth]{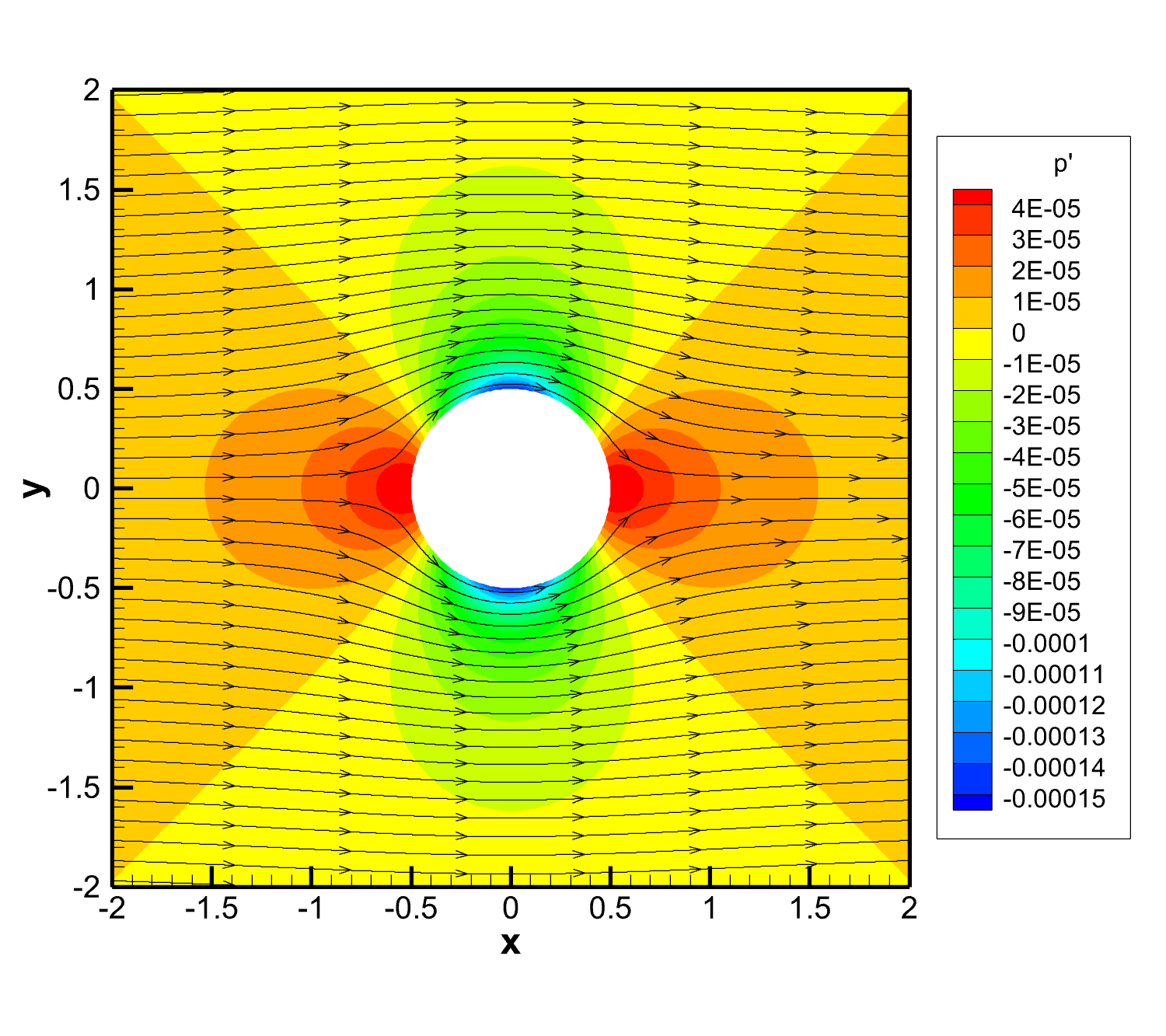}}
    \par
    \caption{Pressure perturbation $p'$ and streamlines.}
  \end{subfigure}\\[0.3cm]
  \begin{subfigure}[t]{0.48\textwidth}\centering
    \makebox[\textwidth][l]{\hspace*{-5.28pt}\includegraphics[width=0.88992\textwidth]{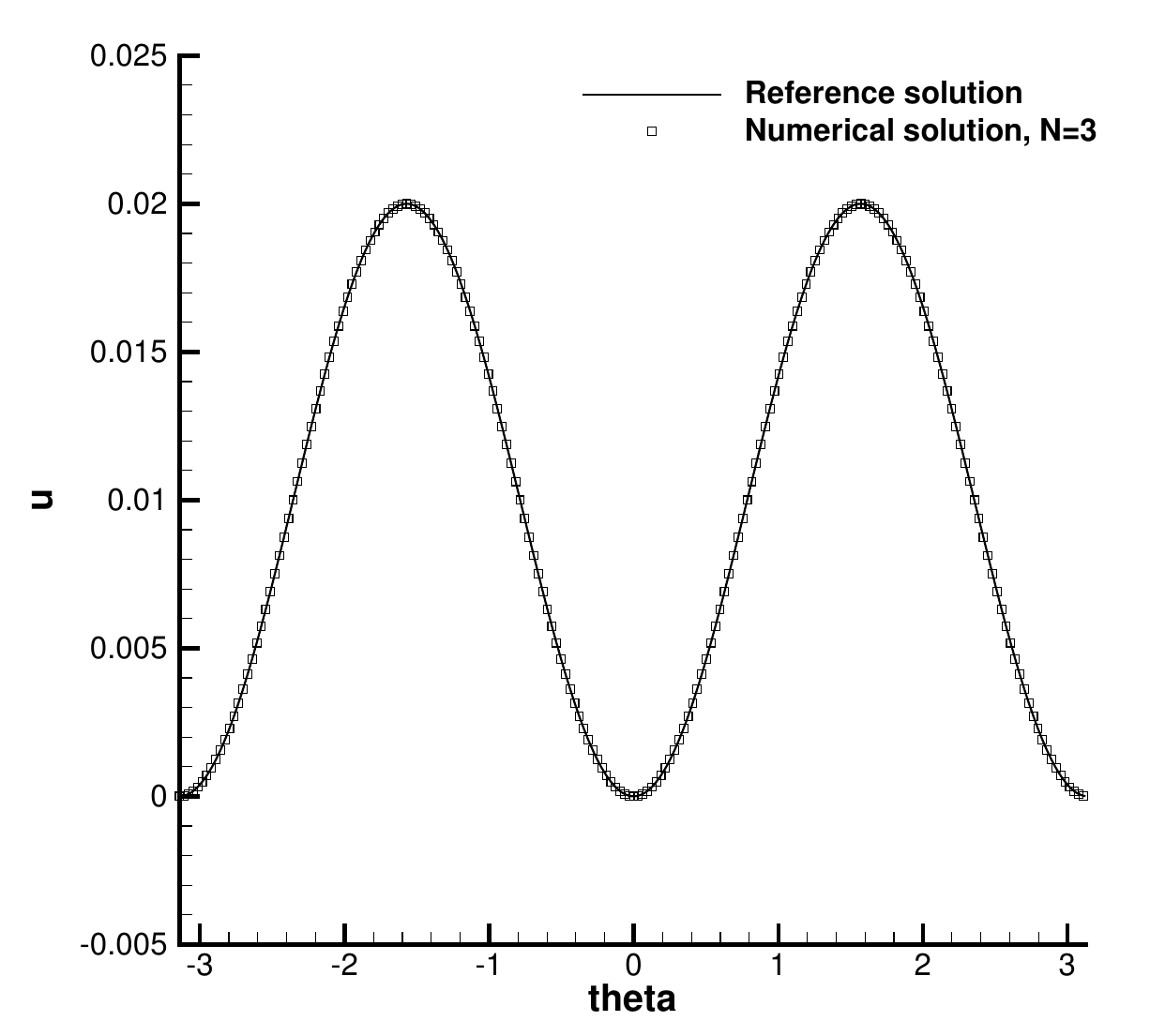}}
    \par
    \caption{Velocity component $u\vphantom{p'}$ along the cylinder wall.}%
  \end{subfigure}\hfill
  \begin{subfigure}[t]{0.48\textwidth}\centering
    \makebox[\textwidth][l]{\hspace*{-5.28pt}\includegraphics[width=0.88992\textwidth]{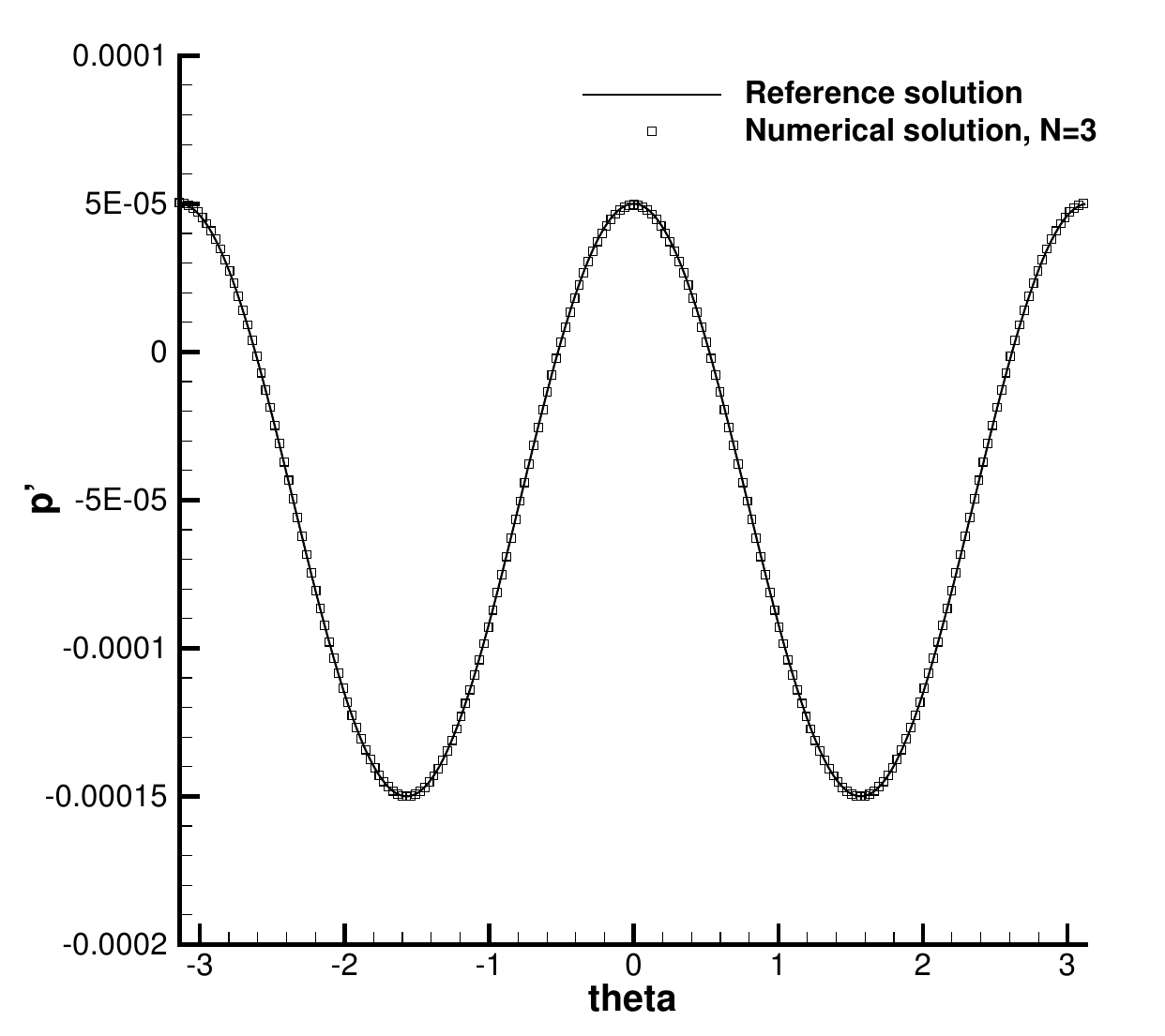}}
    \par
    \caption{Pressure perturbation $p'$ along the cylinder wall.}
  \end{subfigure}
  \caption{Low Mach number potential flow around a cylinder, $N=3$, $80\times80$ cells: numerical solution at $t=0.5$ (top) and comparison with the reference solution along the cylinder wall (bottom).}
  \label{fig:potflow}
\end{figure}

\begin{table}[tb]
  \centering
  \caption{Low Mach number flow around a cylinder, $N=2$: $L^1$ errors of $\rho$, $u$, $v$ and $p$ with respect to the incompressible reference solution, observed orders, minimal volume fraction and number of time steps.}
  \label{tab:potflow}
  \begin{footnotesize}
  \input{potentialflow.tex}
  \end{footnotesize}
\end{table}

\FloatBarrier %
\subsection{Acoustic wave propagation in a more complex geometry: pressure pulse scattered by five cylinders}\label{sec:num-waves}

The aim of the test case presented in this section is to test the ability of the code to handle non-convex domains with multiple obstacles.  	
Following \cite{Giuliani2022}, we consider the scattering of a smooth pressure pulse with small amplitude at five circular obstacles. The Cartesian background mesh covers $\widehat\Omega = [0,20]^2$, from which five circular cylinders of radius $R=2$ have been cut out in order to obtain the computational domain $\Omega$. The centers of the cylinders \begin{equation}
	\mathbf{x}_k = \left( \begin{array}{c} 10 + R_0 \cos\varphi_k \\ 10 + R_0 \sin\varphi_k \end{array} \right), \qquad \varphi_k = \frac{\pi}{2} + \frac{2 \pi k}{5}, \qquad k = 0, \ldots, 4, \label{eqn.scatter.centers}
\end{equation}
are located on a circle of radius $R_0=5$ around the center of the domain, as in \cite[Fig.~6.5(a)]{Giuliani2022}, see \cref{fig:scatter}. The initial condition is a quiescent gas with a Gaussian perturbation of amplitude $A = 10^{-4}$ in pressure and density, located in the center of the domain,
\begin{equation}
	\rho(\mathbf{x},0) = 1 - \frac{1}{\gamma} + p(\mathbf{x},0), \qquad \mathbf{v}(\mathbf{x},0) = 0, \qquad p(\mathbf{x},0) = \frac{1}{\gamma} + A \, e^{-b \left( (x-10)^2 + (y-10)^2 \right)}, \label{eqn.scatter.ic}
\end{equation}
with $b = \ln 2 / 0.2^2$ and $\gamma = 1.4$, so that the sound speed of the background state is unity. Slip wall boundary conditions are imposed on the five obstacles, the geometry of each of which is approximated by a cubic spline using 1024 points, while on the outer boundary of the domain the quiescent background state $\rho = 1$, $\mathbf{v} = 0$, $p = 1/\gamma$ is prescribed. The simulation is run until a final time of $T=6$.

\begin{figure}[tb]
  \centering
  \begin{subfigure}[t]{0.48\textwidth}\centering
    \begin{tikzpicture}
      \node[anchor=south west,inner sep=0] (img) at (0,0) {\includegraphics[width=0.86000\textwidth]{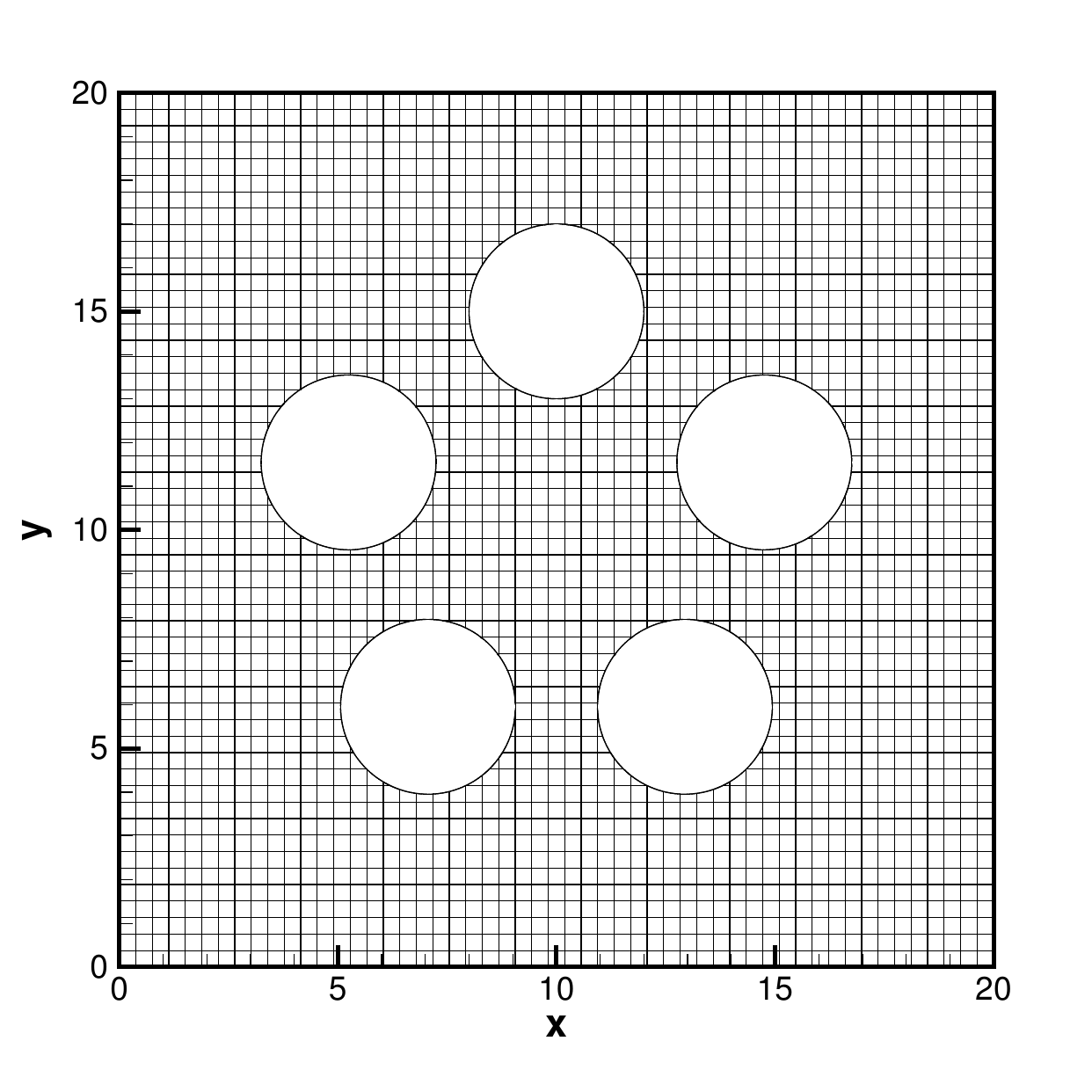}};
      \begin{scope}[x={(img.south east)},y={(img.north west)}]
        \draw[line width=0.8pt] (0.31918,0.57657) ++(-5pt,-5pt) -- ++(10pt,10pt) (0.31918,0.57657) ++(-5pt,5pt) -- ++(10pt,-10pt);
      \end{scope}
    \end{tikzpicture}
    \par
    \caption{Mesh, $53\times53$ cells.}
  \end{subfigure}\hfill
  \begin{subfigure}[t]{0.48\textwidth}\centering
    \raisebox{7.47pt}{\begin{tikzpicture}
      \node[anchor=south west,inner sep=0] (img) at (0,0) {\includegraphics[trim=10 10 10 10,clip,width=0.98037\textwidth]{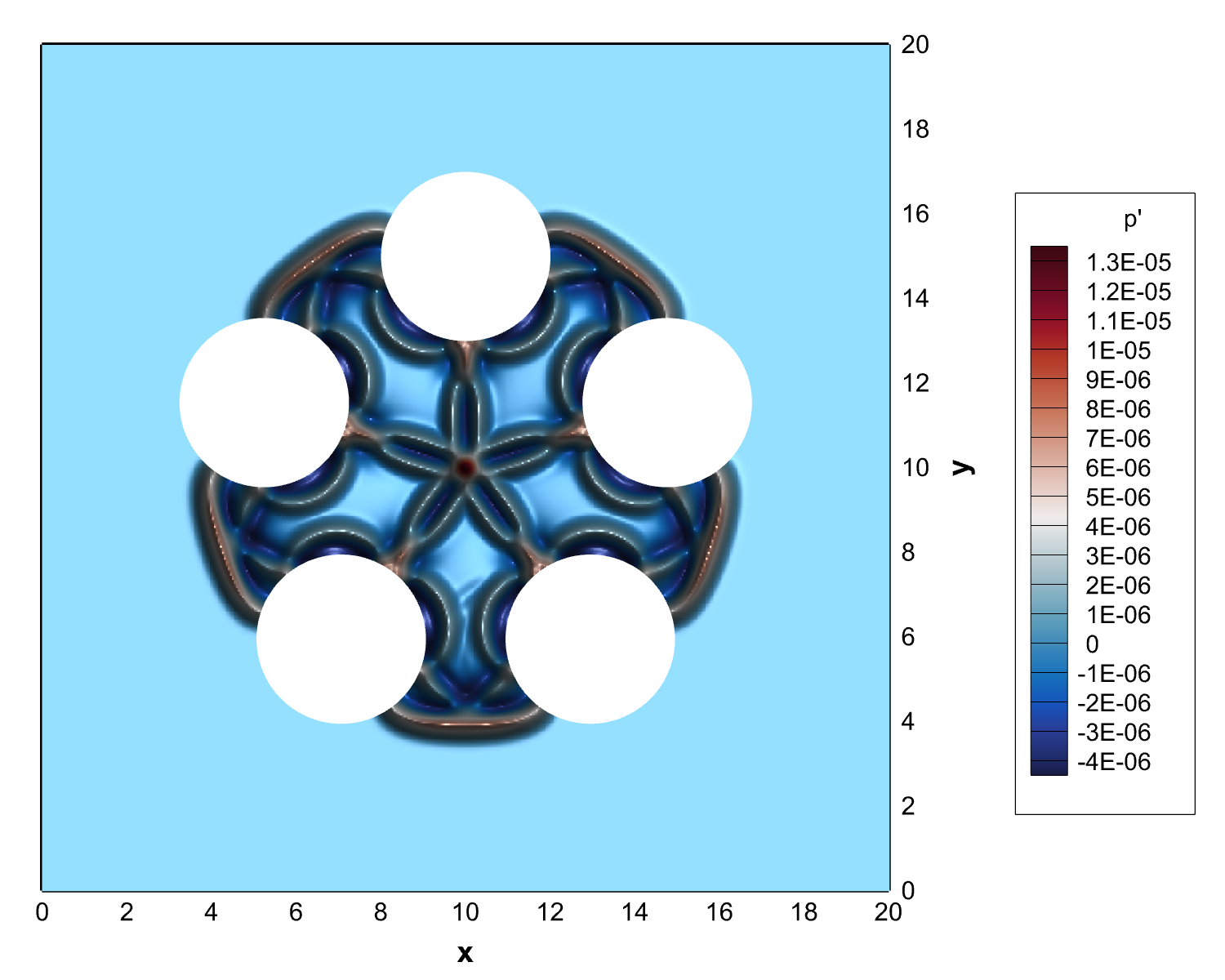}};
      \begin{scope}[x={(img.south east)},y={(img.north west)}]
        \draw[line width=0.8pt] (0.21095,0.59153) ++(-5pt,-5pt) -- ++(10pt,10pt) (0.21095,0.59153) ++(-5pt,5pt) -- ++(10pt,-10pt);
      \end{scope}
    \end{tikzpicture}}
    \par
    \caption{Pressure perturbation at $t=6$, $N=3$, $106\times106$ cells.}
  \end{subfigure}
  \caption{Pressure pulse scattered by five cylinders.}
  \label{fig:scatter}
\end{figure}

\Cref{fig:scatter} shows the pressure perturbation $p' = p - 1/\gamma$ at $T=6$, obtained with $N=3$ on a background mesh of $106 \times 106$ cells, on which the minimum volume fraction is $\alpha_{\min} = 8.01 \cdot 10^{-6}$. The solution presents the expected five-fold symmetric scattering pattern and agrees qualitatively with the results in \cite{Giuliani2022}. No spurious oscillations or asymmetries are visible close to the embedded boundaries.
For a quantitative comparison, \cref{fig:scatter-wall} shows the pressure deviation along the obstacle marked by $\times$ in \cref{fig:scatter}, which is the same obstacle as the one marked by $\times$ in \cite[Fig.~6.5(a)]{Giuliani2022}, together with the data of \cite[Fig.~6.5(b)]{Giuliani2022}, which was extracted directly from the vector graphics of that figure. The two numerical solutions agree very well with each other. The root mean square of the difference, evaluated at the points of the reference curve with the present solution interpolated linearly in $\theta$, is below $1\%$ of the maximum absolute value of the scaled pressure deviation of the reference solution, and the amplitudes of all local extrema agree to within $2\%$ of this maximum. 

\begin{figure}[tb]
  \centering
  \includegraphics[width=0.75\textwidth]{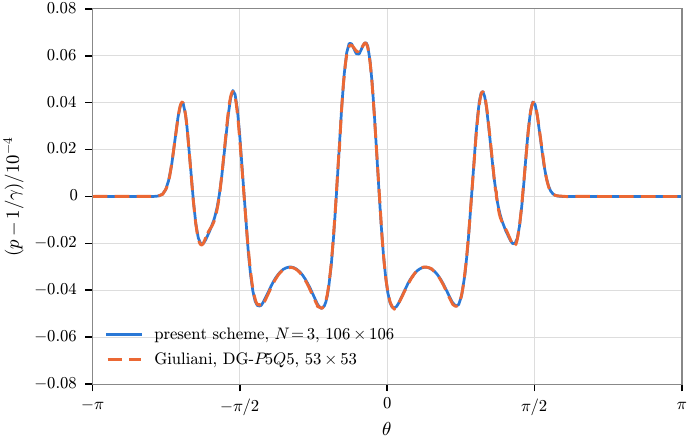}
  \caption{Smooth acoustic pressure pulse with small amplitude, scattered by five cylinders: scaled pressure deviation $(p - 1/\gamma)/10^{-4}$ at $T=6$ along the obstacle marked by $\times$ in \cref{fig:scatter} and in \cite[Fig.~6.5(a)]{Giuliani2022}, as a function of the polar angle $\theta$ about the center of that obstacle, i.e.\ of the obstacle with $k=1$ in \eqref{eqn.scatter.centers}, measured counterclockwise from the $x$-axis. ADER-ZIST with $N=3$ on $106 \times 106$ cells compared with the DG-$P5Q5$ solution of \cite{Giuliani2022} on $53 \times 53$ cells. }
  \label{fig:scatter-wall}
\end{figure}

\FloatBarrier %
\subsection{Test problems with shocks}\label{sec:num-shocks}

For the following tests, we switch on the artificial viscosity (AV) described in \cref{sec:av}, which is based on the shock sensor of Abgrall and Liu \cite{AbgrallLiu2026}. 

\subsubsection{Sod shock tube with walls}\label{sec:num-sod}

We first consider the classical Sod shock tube problem in $x$-direction, but in a two-dimensional strip. The Cartesian background mesh covers the square $\widehat\Omega = [-1,1]^2$ with $100 \times 10$ cells, i.e.\ $h_x = 0.02$ and $h_y = 0.2$. Two horizontal walls are placed at $y = \pm y_0$ with $y_0 = 0.82$, so that the computational domain is the strip $\Omega = [-1,1] \times [-y_0,y_0]$ and the cells along each wall are cut cells with volume fraction $\alpha=0.1$. As a result, the mesh contains
all three types of cells -- explicit Cartesian cells, which are updated by the classical ADER-DG scheme with a nodal tensor-product basis, transition cells, which use the implicit space-time DG method but with the same nodal tensor-product basis, and finally cut
cells directly adjacent to the walls, which use the space-time DG method on a modal basis of approximation degree $N$. For a sketch of the mesh and the flag array, see \cref{fig:sod}. The exact solution is computed with the exact Riemann solver of Toro \cite{toro-book} and is
one-dimensional. Hence, ideally all cells should produce (almost) the same solution values. However, this is not trivial since three different numerical schemes are employed and the basis in the space-time DG cells changes between transition cells and cut cells. 
The initial data are $(\rho,u,v,p) = (1,0,0,1)$ for $x<0$ and $(\rho,u,v,p) = (0.125,0,0,0.1)$ for $x>0$, and these two states are prescribed on the left and on the right boundary, respectively. We use $N=2$ and compute up to $T=0.4$.

\begin{figure}[!htbp]
  \centering
  \begin{subfigure}[t]{0.48\textwidth}\centering
    \includegraphics[trim=15 20 12 20,clip,width=\textwidth]{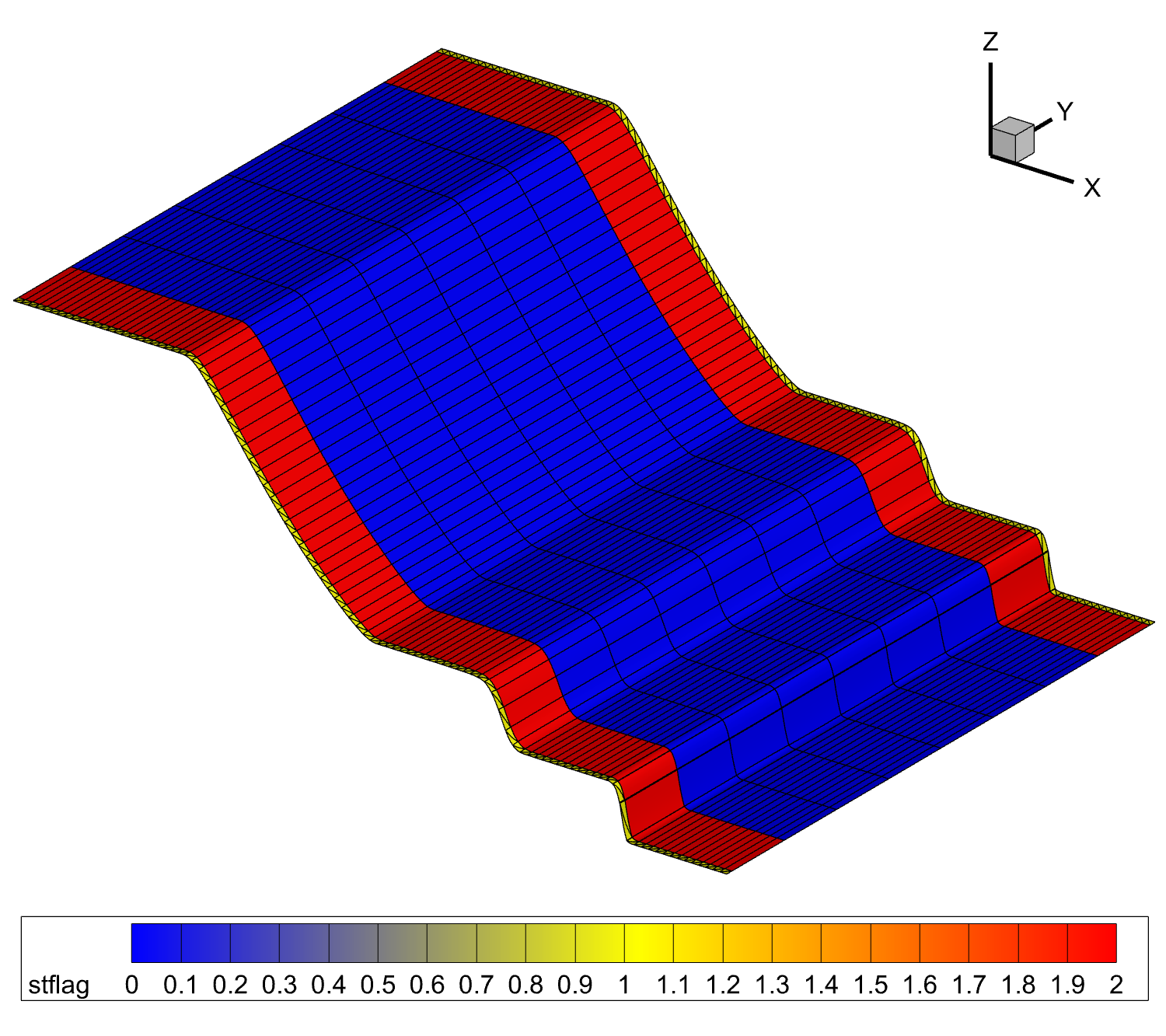}
    \caption{Color codes for the different cell types (flag array, see \cref{sec:classification}: 0 explicit cell, 1 cut cell, 2 transition cell.}
  \end{subfigure}\hfill
  \begin{subfigure}[t]{0.48\textwidth}\centering
    \includegraphics[trim=15 20 12 20,clip,width=\textwidth]{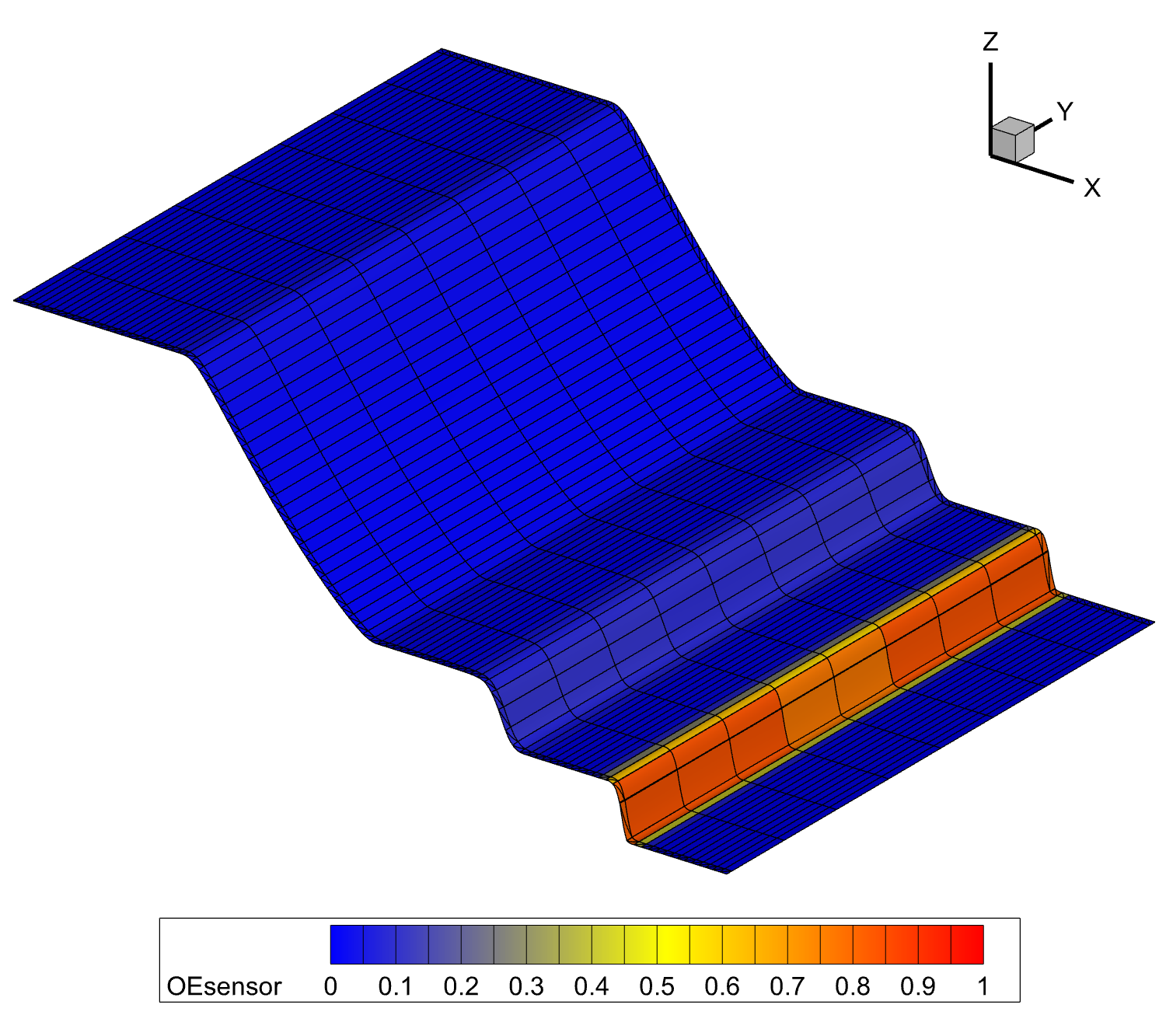}
    \caption{OE sensor value $\chi_E$.}
  \end{subfigure}\\[0.3cm]
  \begin{subfigure}[t]{0.32\textwidth}\centering
    \includegraphics[width=\textwidth]{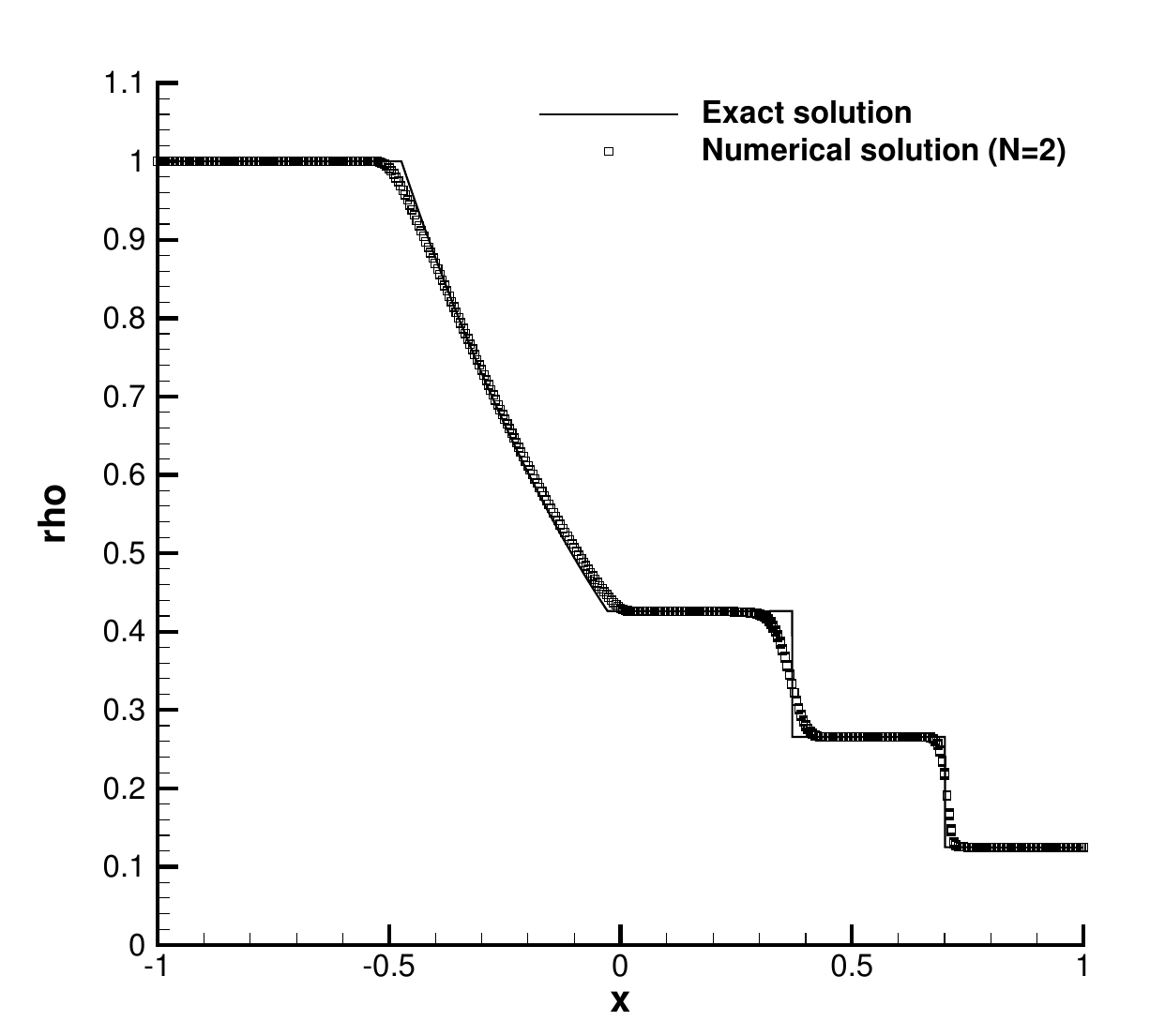}
    \caption{Density $\rho$.}
  \end{subfigure}\hfill
  \begin{subfigure}[t]{0.32\textwidth}\centering
    \includegraphics[width=\textwidth]{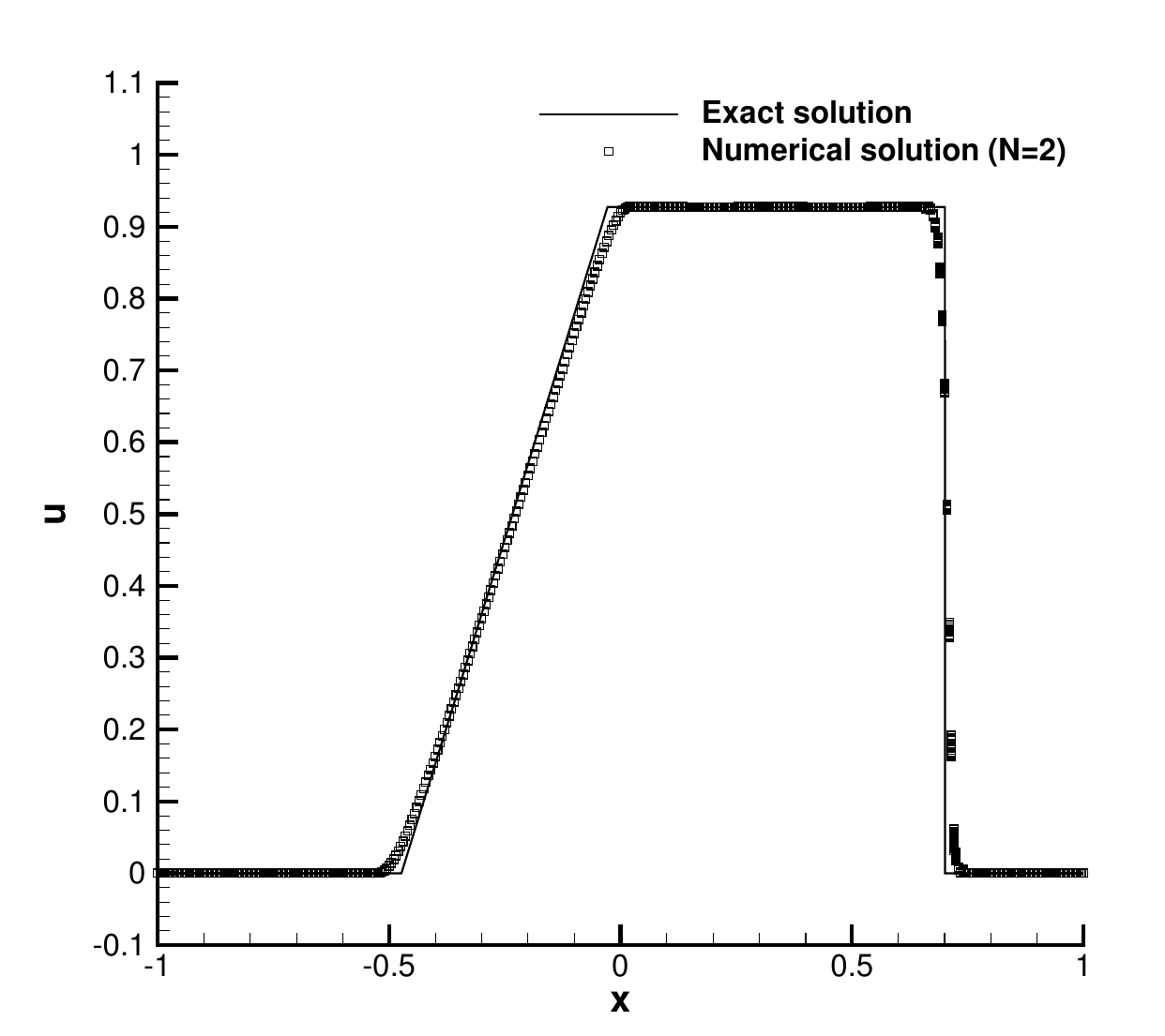}
    \caption{Velocity $u$.}
  \end{subfigure}\hfill
  \begin{subfigure}[t]{0.32\textwidth}\centering
    \includegraphics[width=\textwidth]{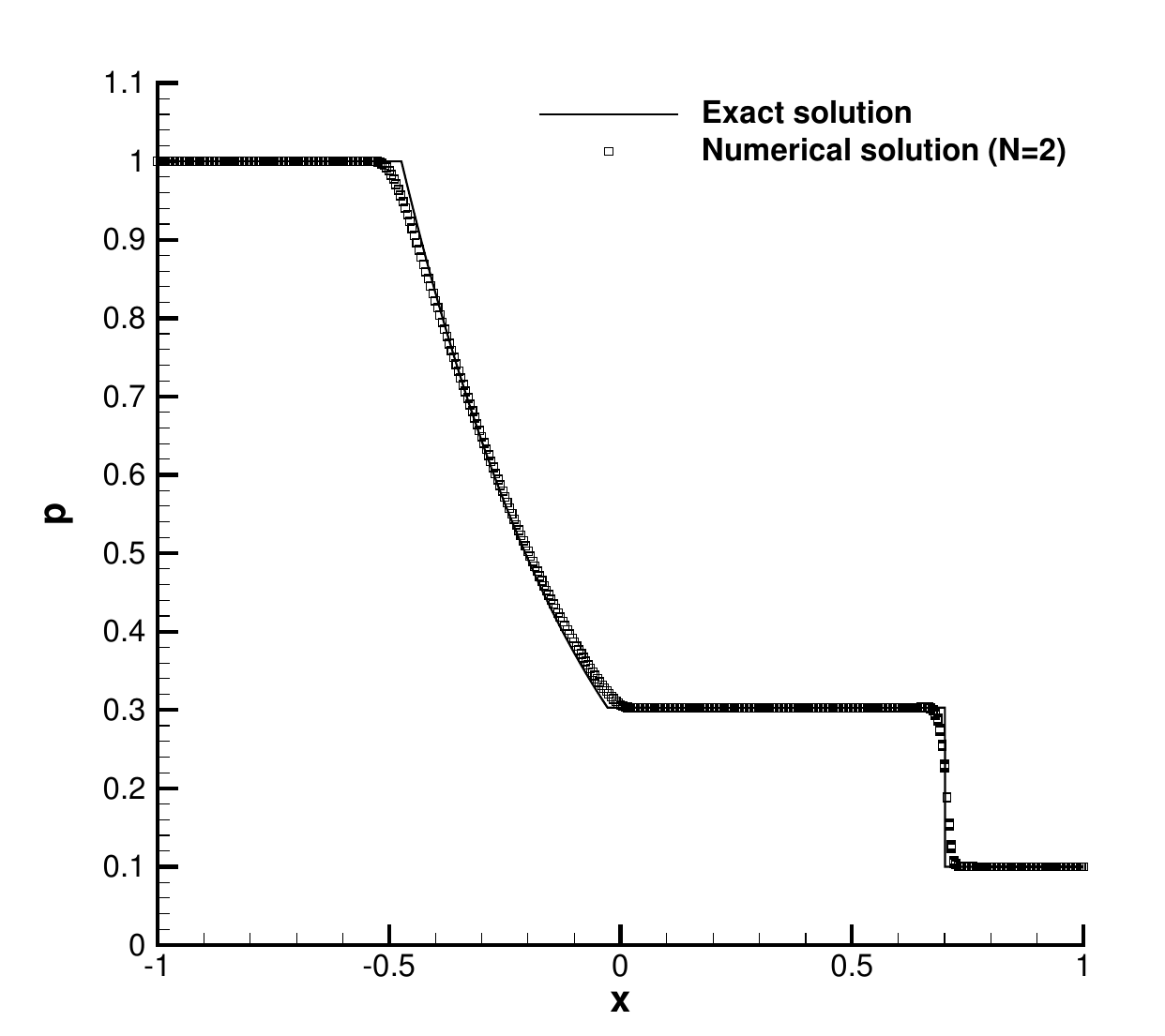}
    \caption{Pressure $p$.}
  \end{subfigure}
  \caption{Sod shock tube problem computed in 2D with cut cells at the walls: cell types and artificial viscosity sensor
    (top), and scatter plots of the solution values in all cells against the exact solution at $T=0.4$ (bottom).}
  \label{fig:sod}
\end{figure}

The scatter plots in \cref{fig:sod} show the solution values of all cells. From the figure one can observe that the values in the cut cells and transition cells agree with those in the Cartesian
cells, and in all cases the solution agrees well with the exact solution. Furthermore, the shock sensor of Abgrall and Liu \cite{AbgrallLiu2026} activates the artificial viscosity only in the vicinity of the shock and the contact discontinuity.

\FloatBarrier %
\subsubsection{Shock diffraction around a cylinder}\label{sec:num-cylshock}

As a last test case we consider the shock diffraction problem presented in \cite{MayBerger2013} and \cite{BergerGiuliani2021}. A right-moving Mach 2 shock, initially located at $x=0.2$, with pre-shock state $\mathbf{V}_R = (\rho_R,u_R,v_R,p_R)=(1.4,0,0,1)$ and post-shock state
$\mathbf{V}_L = (3.7333,1.25,0,4.5)$, hits a cylinder of radius $R = 0.15$ centered at $\mathbf{x}_c = (0.5,0.5)$. The Cartesian background domain is $\widehat\Omega = [0,1] \times [-0.05,1.05]$. We employ a polynomial approximation degree of $N=2$ on a Cartesian background mesh composed of $150\times165$ cells, i.e.\ $h_x = h_y = 1/150$, and run the simulation up to times $T_1=0.25$ and $T_2=0.3$, respectively. Slip wall boundary conditions are imposed on the cylinder and on the two horizontal walls, while the post-shock state $\mathbf{V}_L$ is prescribed on the left and the pre-shock state $\mathbf{V}_R$ on the right boundary. \Cref{fig:cylshock} depicts 
the density contours and the density along the cylinder surface compared with the reference solutions of Berger and Giuliani \cite{BergerGiuliani2021} (method of lines,
$T_1=0.25$) and May and Berger \cite{MayBerger2013} ($T_2=0.3$), which have been digitized from the figures of these two papers.

\begin{figure}[!htbp]
  \centering
  \begin{subfigure}[t]{0.48\textwidth}\centering
    \includegraphics[trim=40 20 30 20,clip,width=\textwidth]{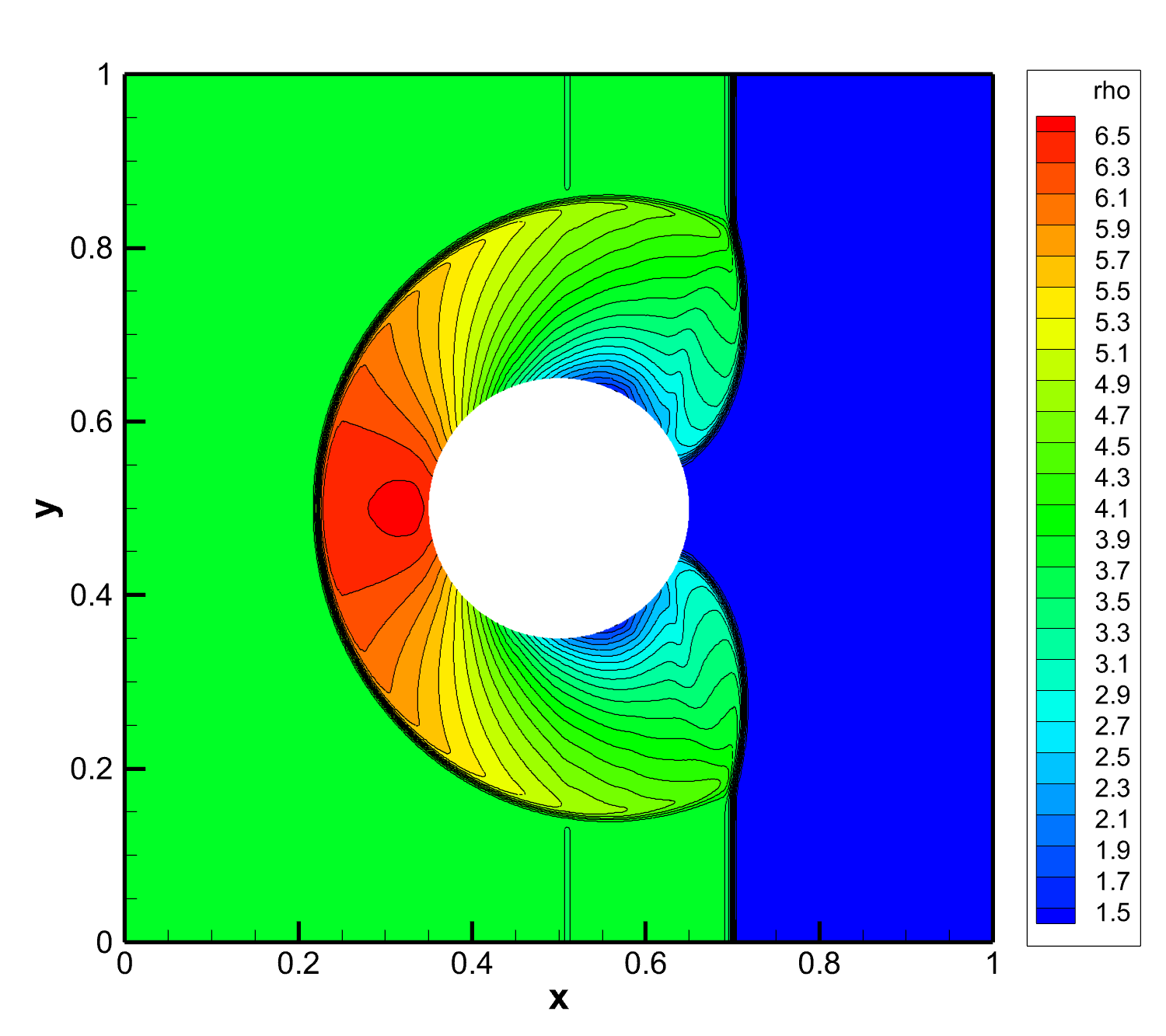}
    \par
    \caption{Density contour colors at time $T_1=0.25$.}
  \end{subfigure}\hfill
  \begin{subfigure}[t]{0.48\textwidth}\centering
    \raisebox{2.52pt}{\includegraphics[trim=50 20 50 20,clip,width=0.95413\textwidth]{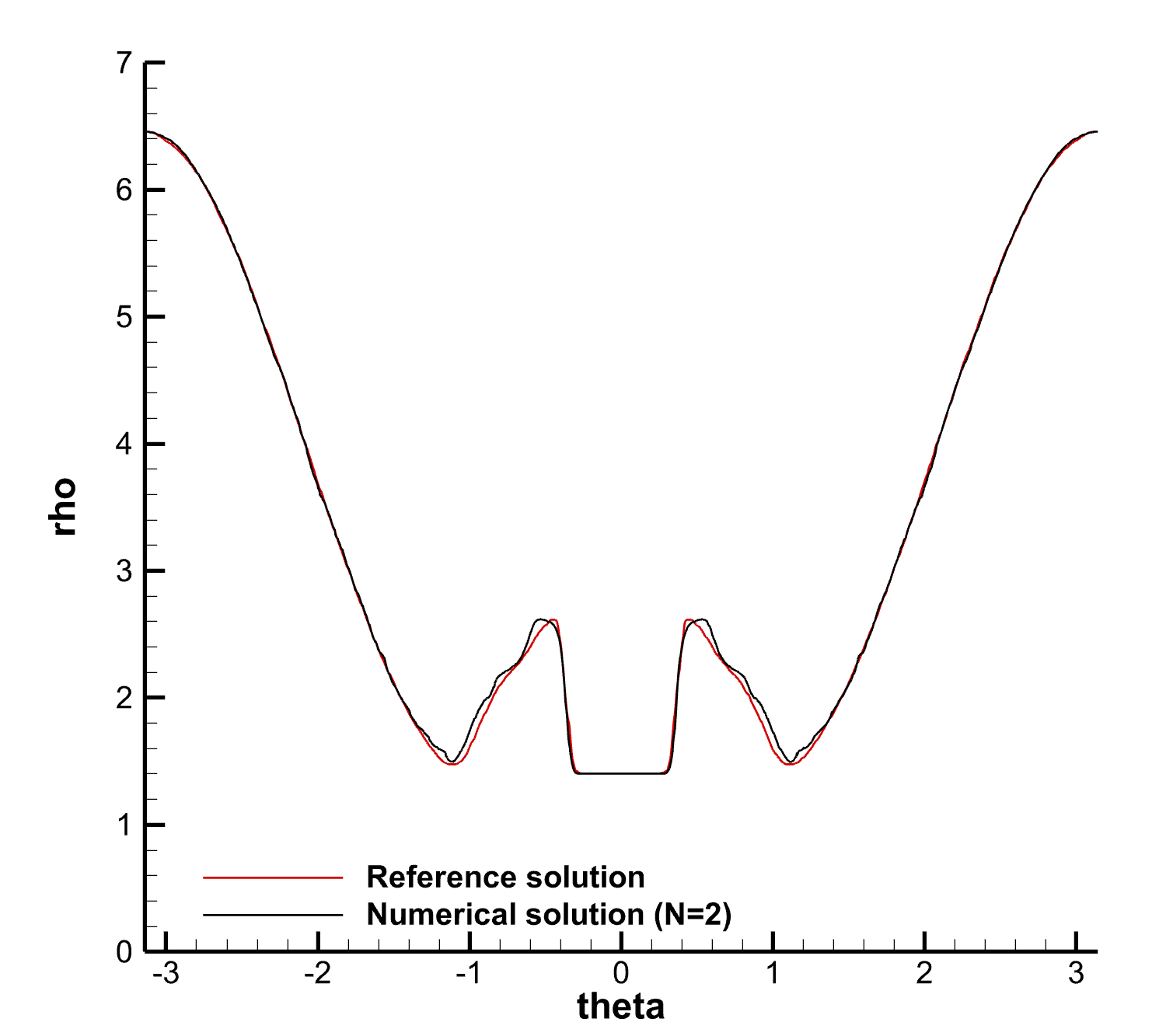}}
    \par
    \caption{Density along the cylinder surface and comparison with the reference solution at time $T_1=0.25$.}
  \end{subfigure}\\[0.3cm]
  \begin{subfigure}[t]{0.48\textwidth}\centering
    \includegraphics[trim=40 20 30 20,clip,width=\textwidth]{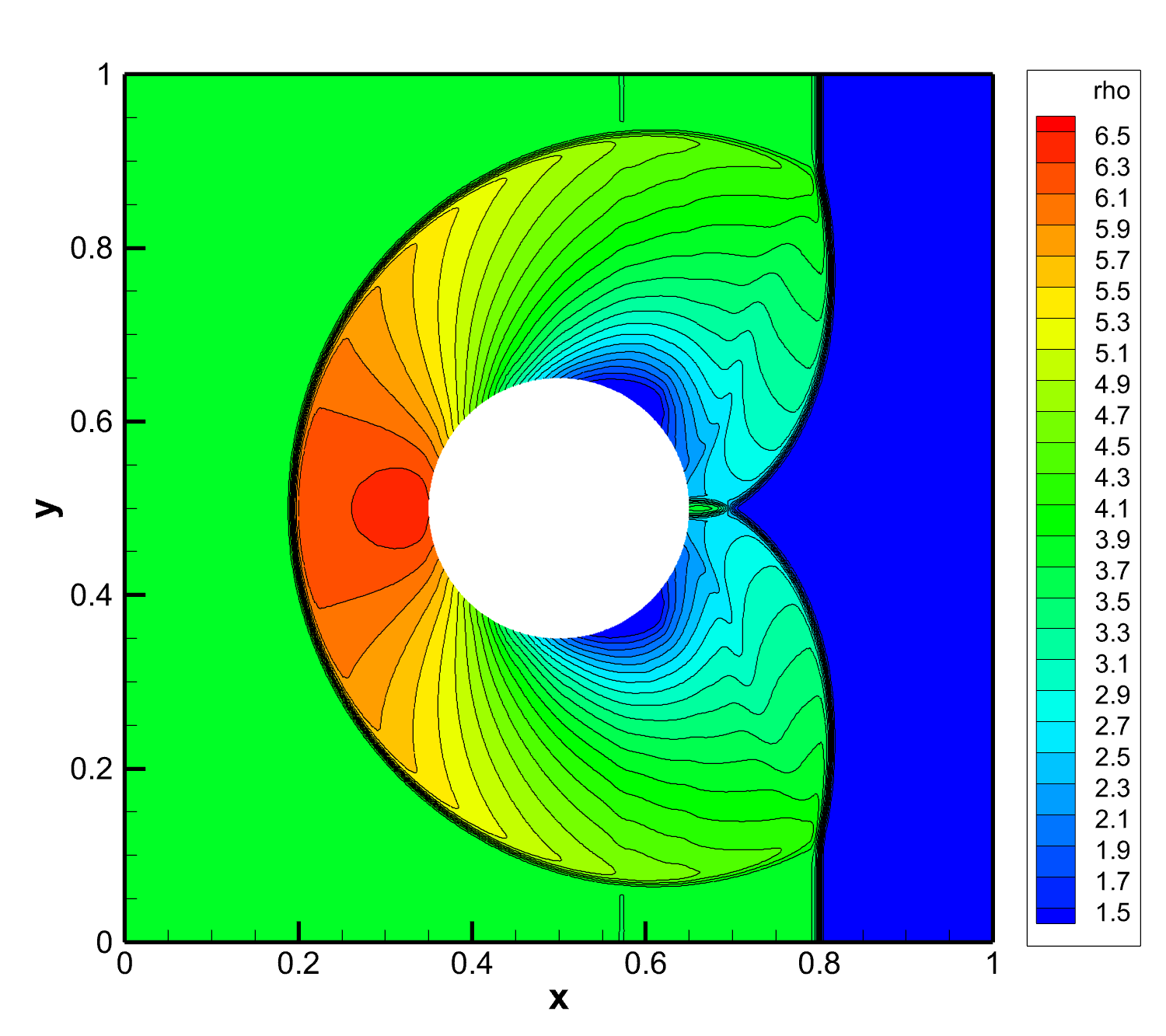}
    \par
    \caption{Density contour colors at time $T_2=0.3$.}
  \end{subfigure}\hfill
  \begin{subfigure}[t]{0.48\textwidth}\centering
    \raisebox{2.52pt}{\includegraphics[trim=50 20 50 20,clip,width=0.95413\textwidth]{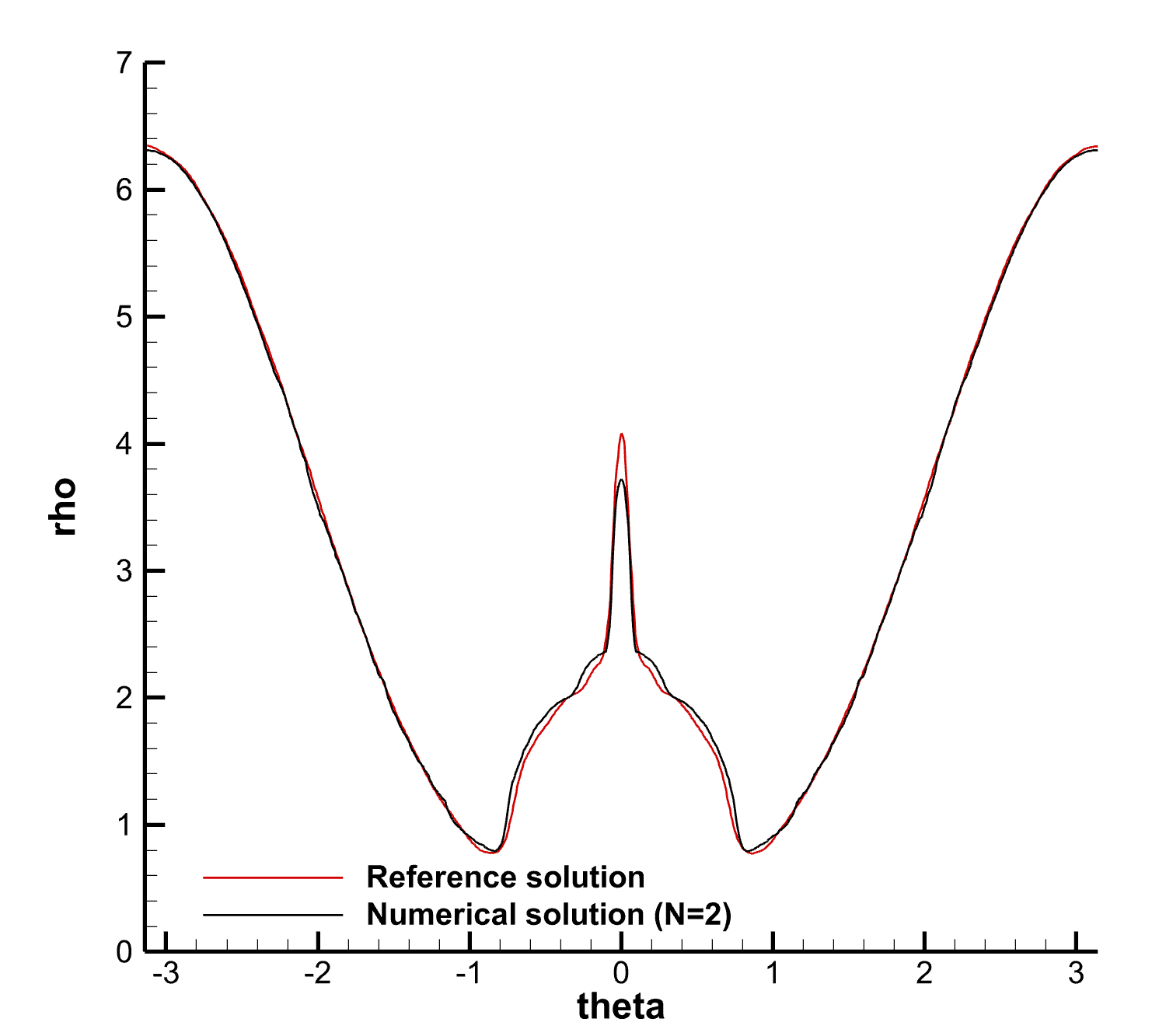}}
    \par
    \caption{Density along the cylinder surface at time $T_2=0.3$ and comparison with the reference solution.}
  \end{subfigure}
  \caption{Shock diffraction around a cylinder, $N=2$, $150\times165$ cells.}
  \label{fig:cylshock}
\end{figure}

In both cases the density profile along the cylinder agrees well with the two reference solutions, which have been obtained on Cartesian cut cells, but with different numerical schemes. The mean absolute difference is $0.029$ ($0.6\%$ of the range of the reference density along the cylinder) compared to the method of lines solution at $T_1=0.25$ and $0.043$
($0.8\%$) compared to the solution of May and Berger at $T_2=0.3$. The maximum density of the central density spike at time $t=0.3$ is lower in our solution than in the reference solution. This is probably due to our first order artificial viscosity, while the reference solution has been computed with a second order FV scheme that uses, on the cut cells, the LP limiter of \cite{MayBerger2013}, which limits the $x$- and $y$-components of the gradient separately and has been specifically developed for cut cell meshes.

\FloatBarrier %
\subsection{Computational cost}\label{sec:num-cost}

The main motivation for ADER-ZIST is that the expensive implicit space-time DG scheme is needed only in a thin layer of cells along the embedded boundary, while all remaining cells can be advanced in time with the computationally efficient explicit ADER-DG scheme on the regular Cartesian background mesh, whose element-local predictor and fully discrete corrector are considerably cheaper than the solution of the monolithic nonlinear system of Step~2 in \cref{sec:threestep}. In order to quantify this gain, we compare ADER-ZIST with a globally implicit space-time DG scheme used on the entire domain. This globally implicit scheme is directly obtained from the mixed scheme simply by setting the flag array of all explicit cells to the value 2, i.e.\ by treating all Cartesian cells as transition cells, so that the space-time DG polynomials of all cells of the mesh are computed together with the Newton--Krylov method described in \cref{sec:implementation}. Both variants are run on the same meshes, with the same parameters and with the same time step, namely the one of ADER-ZIST given by \eqref{eq:cfl}, supplemented by \eqref{eq:cflvisc} for the Sod problem. The number of time steps executed by ADER-ZIST and the globally implicit space-time DG method was the same in all cases and is reported in \cref{tab:cpu}; for the scattering problem it corresponds to the final time $T=6$ of \cref{sec:num-waves}, while for the Ringleb flow and the Sod shock tube a fixed number of time steps has been carried out. Since both schemes use the same code base and the same implementation of the implicit solver, the comparison between the two methods is as fair as possible. It therefore quantifies how much computing time is saved when the coupled nonlinear system is restricted to the cut cells and their Cartesian face neighbors, a saving that of course also depends on how the Newton--Krylov solver behaves for the smaller and for the larger implicit system. The CPU times refer to the execution of the main time loop only, i.e.\ without the computation of the cut cell geometry and without any output. If not stated otherwise, numerical experiments have been carried out one after the other pinned to one single CPU core, fixed to 4.4~GHz clock frequency, of an Intel Core Ultra 7 165H processor with 32~GB of RAM, and with no other jobs running on the same machine.

For the performance comparison we consider three test problems of rather different character:
\begin{itemize}
  \item the Ringleb flow of \cref{sec:num-ringleb} with $N=2$ and $N=3$ on the four meshes with $h=0.2$, $0.1$, $0.05$ and $0.025$ of \cref{tab:ringleb}, which is stationary, so that the Newton method of the implicit solver needs only very few iterations per time step, and which therefore represents the most favorable situation for the globally implicit scheme;
  \item the acoustic pulse scattered by five cylinders of \cref{sec:num-waves} with $N=3$ on the mesh of $106 \times 106$ background cells, which is smooth and genuinely time-dependent;
  \item the Sod shock tube with walls of \cref{sec:num-sod} with $N=2$, in which the solution is discontinuous, hence the artificial viscosity is active and the preconditioner is recomputed every five time steps; due to the time dependence and the non-smooth solution of this nonlinear problem, several Newton iterations per time step are needed. 
\end{itemize}

The obtained results are summarized in \cref{tab:cpu}. The two schemes (mixed explicit-implicit, globally implicit) produce essentially the same solution up to small differences: for the Ringleb flow, the $L^1$ errors of the density obtained with the two schemes differ by less than $6\%$ relative to the error of ADER-ZIST, and for the scattering problem and the Sod shock tube, the relative $L^1$ difference $\delta_z = \| z_{\mathrm{mixed}} - z_{\mathrm{impl}} \|_{L^1(\Omega)} / \| z_{\mathrm{mixed}} \|_{L^1(\Omega)}$ between the two numerical solutions after the last time step amounts to $4.8 \cdot 10^{-3}$ for the pressure perturbation $z = p - 1/\gamma$ and to $1.9 \cdot 10^{-3}$ for the density $z = \rho$, respectively, while the globally implicit space-time DG scheme is between $1.0$ and $5.2$ times as expensive as ADER-ZIST. As expected, the ratio of the CPU times grows with mesh refinement, since the number of implicit cells of ADER-ZIST, which are located along the embedded boundary, grows only like $h^{-1}$, while the number of explicit cells grows like $h^{-2}$. For the Ringleb flow with $N=3$, for instance, the share of the explicit cells increases from $28\%$ on the coarsest to $88\%$ on the finest mesh, and the ratio of the CPU times increases accordingly from $1.1$ to $4.8$, while the largest ratio of $5.2$ is obtained for the scattering problem, in which $92\%$ of the cells are explicit.

A more direct measure of the cost of the two schemes is an effective CPU time per cell and time step. We estimate it from the CPU time spent outside the implicit solver in ADER-ZIST and from the increase of the CPU time of the implicit solver in the globally implicit scheme, both divided by the number of explicit cells of the mixed scheme and by the number of time steps. This effective cost per cell also contains the additional work caused by the larger coupled nonlinear system, i.e.\ by changes in the number of Newton and GMRES iterations and by the computation of the preconditioner. In this sense, a cell treated with the implicit space-time DG scheme instead of the explicit ADER-DG scheme costs about $12$ to $16$ times as much for the Ringleb flow with $N=2$ and about $30$ times as much with $N=3$, both on the two finest meshes, $15$ times as much for the scattering problem and $28$ times as much for the Sod shock tube. Consequently, even in ADER-ZIST most of the total CPU time is spent in the implicit cells, namely between $88\%$ and $98\%$ for the Ringleb flow and still $70\%$ for the scattering problem, in which the implicit cells are only $8\%$ of all cells, so that the overall cost of the method is essentially determined by the number of cut cells and transition cells.

Finally, we compare the mixed scheme also with the domain of dependence (DoD) stabilization \cite{EngwerMayNuessingStreitbuerger2020,StreitbuergerBirkeEngwerMay2023}, which stabilizes an explicit DG scheme on cut cell meshes via suitable additional terms that restore the correct domain of dependence of the neighbors of small cut cells, and of which an independent implementation in the \texttt{deal.II} library \cite{dealII98,dealIIdesign} is available.
As a benchmark, we solve the linear scalar advection equation $\partial_t q + \mathbf{v} \cdot \nabla q = 0$ on the ramp geometry of \cref{sec:num-ramp}, with the constant velocity $\mathbf{v}$ of \eqref{eqn.ramp.vp} parallel to the ramp and with the exact solution given by the right-hand side of \eqref{eqn.ramp.rho}, which also serves as initial condition and as Dirichlet boundary condition, on the same meshes and with the same polynomial degrees $N=1,2,3$ and the same final time $T=0.02$ in both codes.
This comparison study was carried out on an Intel Core Ultra 7 255H processor with 32~GB of RAM, running one simulation at a time, pinned to a single CPU core at 5.1~GHz.
For both codes, i.e.\ for the \texttt{deal.II} implementation of the DoD stabilization as well as for the two CC2D variants, the reported CPU timings contain only the time loop, excluding any setup time. The DoD stabilization is combined with the explicit Heun method for $N=1$ and with the third order SSP Runge-Kutta scheme for $N=2$, both with $\mathrm{CFL}=0.4$, as well as with the classical fourth order Runge-Kutta scheme with $\mathrm{CFL}=0.3$ for $N=3$, while CC2D uses $\mathrm{CFL}=0.4$ in \eqref{eq:cfl} for all $N$, and both CC2D variants execute the same number of time steps.

The results are summarized in \cref{tab:dod}. In the $L^1$ norm, all three schemes converge with the nominal order $N+1$ and their errors are very close to each other: the errors of ADER-ZIST are between $3\%$ and $18\%$ smaller than those of the DoD stabilization, while those of the globally implicit space-time DG scheme are at most $4\%$ larger. In the $L^\infty$ norm, the errors of the two CC2D variants, which are practically identical, are larger than those of the DoD stabilization by a factor between $1.3$ and $2.4$. As expected, the fully explicit DoD stabilization is faster than ADER-ZIST, which has to solve an implicit system on the cut cells and on the transition cells in each time step: the CPU time of the mixed scheme is between $1.4$ and $2.7$ times the one of the DoD stabilization, while the globally implicit space-time DG scheme is again $2.2$ to $4.0$ times as expensive as ADER-ZIST. On the other hand, ADER-ZIST applies without any modification to nonlinear hyperbolic systems in multiple space dimensions, such as the Euler equations of all other test cases of this section, while the DoD stabilization has so far been developed for linear conservation laws in two space dimensions \cite{EngwerMayNuessingStreitbuerger2020,StreitbuergerBirkeEngwerMay2023,BirkeEngwerMayPetriRanocha2026} and for nonlinear conservation laws in one space dimension \cite{MayStreitbuerger2022}, and its extension to nonlinear systems in multiple space dimensions is the subject of ongoing research.

\begin{table}[!htbp]
  \centering
  \caption{Computational cost of ADER-ZIST (mixed) and of the globally implicit space-time DG scheme (impl.) on the same meshes, with the same time step and the same number of time steps: number of time steps, number of explicit and implicit cells of ADER-ZIST, percentage of explicit cells, CPU times and their ratio.}
  \label{tab:cpu}
  \begin{footnotesize}
  \input{cpu.tex}
  \end{footnotesize}
\end{table}

\begin{table}[!htbp]
  \centering
  \caption{Linear scalar advection along the ramp of \cref{sec:num-ramp}: $L^1$ and $L^\infty$ errors and CPU times of the DG scheme with DoD stabilization (DoD, \texttt{deal.II}), of ADER-ZIST (mixed) and of the globally implicit space-time DG scheme (impl.), the latter two computed with CC2D, all on the same meshes and on the same machine.}
  \label{tab:dod}
  \begin{footnotesize}
  \input{dod.tex}
  \end{footnotesize}
\end{table}

\clearpage

\section{Conclusion}\label{sec:conclusion}

In this paper we have presented a new mixed explicit-implicit ADER-space-time DG scheme, called ADER-ZIST, for the solution of nonlinear hyperbolic systems of conservation laws on Cartesian cut cell meshes. Our method solves the well-known small cell problem via the use of a zonally implicit space-time DG scheme in the cut cells and in their direct neighbors. 

More precisely, away from the embedded boundaries, the discrete solution is evolved in time with a classical \textit{explicit} Cartesian one-step ADER-DG scheme, which consists of an element-local space-time predictor and of a fully discrete corrector step, while on the cut cells and on their Cartesian face neighbors, the so-called transition cells, a zonally \textit{implicit} space-time DG scheme is used. Both schemes employ space-time polynomials of degree $N$ on each space-time cell and the same temporal quadrature points, so that the explicit and the implicit region are coupled with each other simply via the arguments of the numerical flux, using the ADER predictor on the explicit cells and the space-time DG polynomial on the implicit cells. The resulting scheme is conservative, and its time step only depends on the size of the background cells and not on the size of the cut cells. The small cell problem is therefore overcome without cell merging, state redistribution (SRD) or domain of dependence (DoD) stabilization, and without any reduction of the polynomial degree on small cut cells. The nonlinear system of the implicit cells, which are only a thin layer of cells along the embedded boundary, is solved with an inexact matrix-free Newton--Krylov method with a block Jacobi preconditioner. Curved embedded boundaries are taken into account with the aid of isoparametric sub-triangles along a cubic spline representation of the boundary, and for problems with shock waves an artificial viscosity has been added, which is discretized with a non-symmetric interior penalty Galerkin method and which is controlled by an oscillation-eliminating smoothness sensor adapted from the one of Abgrall and Liu \cite{AbgrallLiu2026}.

The accuracy and the robustness of the new scheme have been assessed on a set of test problems. In one space dimension, ADER-ZIST attains the nominal order $N+1$ on the tested cut cell meshes, with errors comparable to those of the unmixed schemes on equidistant meshes. A separate one-step comparison identifies a local coupling contribution of order $N+1$, which changes the leading local error without reducing its order in that test. In two space dimensions, the convergence studies carried out for the compressible Euler equations, namely the advection of a density profile along a ramp, an isentropic vortex around a cylinder, the Ringleb flow in a non-convex channel, a supersonic vortex between two circular arcs and the low Mach number flow around a cylinder, have shown that the scheme essentially reaches its nominal order of accuracy $N+1$ for polynomial degrees up to $N=5$ in most but not all cases. The Cartesian cut cell meshes under consideration in this paper have minimal volume fractions down to the order of $10^{-7}$, but the time step size has always been chosen according to the uniform background mesh and has never been affected by the size of the small cut cells. The results obtained for the scattering of an acoustic pulse at five cylinders and for the diffraction of a Mach 2 shock at a cylinder agree very well with the reference solutions available in the literature \cite{MayBerger2013,BergerGiuliani2021,Giuliani2022}, and the Sod shock tube with walls shows that the coupling of the explicit cells, the transition cells and the cut cells produces consistent solutions also in the presence of discontinuities.
A comparison with a globally implicit space-time DG scheme on the same meshes, with the same time step and the same number of time steps, has shown a speedup of ADER-ZIST of up to $5.2$ with comparable accuracy, and that its advantage grows under mesh refinement, as observed for the Ringleb flow, since on the finer meshes considered the effective cost of an implicit space-time DG cell per time step is about $12$ to $30$ times the one of an explicit ADER-DG cell. Compared to the fully explicit DoD stabilization, ADER-ZIST was between $1.4$ and $2.7$ times as expensive for linear advection, with comparable $L^1$ errors, but unlike DoD it directly applies also to nonlinear systems of hyperbolic conservation laws in multiple space dimensions, such as the compressible Euler equations of gasdynamics.

Future work will concern the extension of the method to three space dimensions, to moving embedded boundaries and to more general hyperbolic systems, including non-conservative ones, the combination with space-time AMR, as well as a theoretical analysis of the stability and of the accuracy of ADER-ZIST, at least for linear problems in one space dimension.

\section*{Acknowledgments}
M.D. was funded by the Fondazione Caritro via the project SOPHOS and by the European Research Council (ERC) under the European Union's Horizon Europe research and innovation programme via the project SOPHOS, grant agreement no.  ERC-ADG-2025-101265878-SOPHOS.
Views and opinions expressed are however those of the authors only and do not necessarily reflect those of the European Union or the European Research Council Executive Agency. Neither the European Union nor the granting authority can be held responsible for them.
M.D. was also financially supported by the Italian Ministry of University
and Research (MUR) via the  Departments of Excellence  Initiative 2018--2027 attributed to DICAM of the University of Trento (grant L. 232/2016).
M.D. is a member of the Gruppo Nazionale per il Calcolo Scientifico dell'Istituto Nazionale di Alta Matematica (GNCS-INdAM).

The research of C.E. was supported by the Deutsche Forschungsgemeinschaft (DFG, German Research Foundation) -- SPP 2410 Hyperbolic Balance Laws in Fluid Mechanics: Complexity, Scales, Randomness (CoScaRa), specifically within the project 526031774 (EsCUT: Entropy-stable high-order CUT-cell discontinuous Galerkin methods). C.E. further acknowledges support by the Deutsche Forschungsgemeinschaft (DFG, German Research Foundation) under Germany's Excellence Strategy EXC 2044 -- 390685587 and EXC 2044/2 -- 390685587, Mathematics M{\"u}nster: Dynamics--Geometry--Structure.

The research of S.M. was supported by the Deutsche Forschungsgemeinschaft (DFG, German Research Foundation) -- SPP 2410 Hyperbolic Balance Laws in Fluid Mechanics: Complexity, Scales, Randomness (CoScaRa), project number 586619126. S.M. was also funded by the Deutsche Forschungsgemeinschaft (DFG, German Research Foundation) under Germany's Excellence Strategy -- The Berlin Mathematics Research Center MATH+ (EXC-2046/2, project ID: 390685689).

\medskip

The authors thank Max Schr\"oder for his support with the numerical comparison of ADER-ZIST and the DoD stabilization.

\medskip

\section*{Declaration of generative AI and AI-assisted technologies}

During the preparation of this work, the authors used the AI assistants Claude (Anthropic), with the model Claude Opus 5.5, and ChatGPT/Codex (OpenAI), with the model Astra 6, in order to draft parts of the text, to search and verify the literature, and to improve language, notation and consistency. After using these tools, the authors reviewed and edited the text as needed and take full responsibility for the content of the published article.

The Fortran codes \texttt{CC1D} and \texttt{CC2D}, which produced the  computational results shown in this paper, were written from scratch by M.D. and S.M. Parts of the 2D code were also written and verified with the help of the AI system Claude Code 2.1.286 (Anthropic), using the model Claude Opus 5.5. The authors reviewed and edited the code as needed, verified it by extensive testing as well as by direct comparisons with an independent implementation in \texttt{deal.II}. The authors take full responsibility for the codes and for the numerical results obtained with them.

\medskip

\medskip

\textbf{This paper is dedicated to Prof. Eleuterio F. Toro on the occasion of his $\textbf{80}^{\textnormal{\textbf{th}}}$ birthday}.

\bibliographystyle{elsarticle-num}
\bibliography{references}

\typeout{get arXiv to do 4 passes: Label(s) may have changed. Rerun}
\end{document}

%% file: cutcell_mesh.tex
\begin{tikzpicture}[scale = 3.0]
  \begin{scope}
  \clip (0,0) rectangle (1.5,1);
  \draw[thick, blue] (0,0) rectangle (1.5,1);
  \draw[thick,step=0.1, blue] (0,0) grid (1.5,1);
    \fill[rust, opacity=0.2] (0.5,0.3) rectangle (0.6,0.4);
    \fill[rust, opacity=0.2] (0.5,0.4) rectangle (0.6,0.5);
    \fill[rust, opacity=0.2] (0.5,0.5) rectangle (0.6,0.6);
    \fill[rust, opacity=0.2] (0.5,0.6) rectangle (0.6,0.7);
    \fill[rust, opacity=0.2] (0.6,0.2) rectangle (0.7,0.3);
    \fill[rust, opacity=0.2] (0.6,0.3) rectangle (0.7,0.4);
    \fill[rust, opacity=0.2] (0.6,0.6) rectangle (0.7,0.7);
    \fill[rust, opacity=0.2] (0.6,0.7) rectangle (0.7,0.8);
    \fill[rust, opacity=0.2] (0.7,0.2) rectangle (0.8,0.3);
    \fill[rust, opacity=0.2] (0.7,0.7) rectangle (0.8,0.8);
    \fill[rust, opacity=0.2] (0.8,0.2) rectangle (0.9,0.3);
    \fill[rust, opacity=0.2] (0.8,0.3) rectangle (0.9,0.4);
    \fill[rust, opacity=0.2] (0.8,0.6) rectangle (0.9,0.7);
    \fill[rust, opacity=0.2] (0.8,0.7) rectangle (0.9,0.8);
    \fill[rust, opacity=0.2] (0.9,0.3) rectangle (1.0,0.4);
    \fill[rust, opacity=0.2] (0.9,0.4) rectangle (1.0,0.5);
    \fill[rust, opacity=0.2] (0.9,0.5) rectangle (1.0,0.6);
    \fill[rust, opacity=0.2] (0.9,0.6) rectangle (1.0,0.7);
    \draw[very thick, rust] (0.5,0.3) rectangle (0.6,0.4);
    \draw[very thick, rust] (0.5,0.4) rectangle (0.6,0.5);
    \draw[very thick, rust] (0.5,0.5) rectangle (0.6,0.6);
    \draw[very thick, rust] (0.5,0.6) rectangle (0.6,0.7);
    \draw[very thick, rust] (0.6,0.2) rectangle (0.7,0.3);
    \draw[very thick, rust] (0.6,0.3) rectangle (0.7,0.4);
    \draw[very thick, rust] (0.6,0.6) rectangle (0.7,0.7);
    \draw[very thick, rust] (0.6,0.7) rectangle (0.7,0.8);
    \draw[very thick, rust] (0.7,0.2) rectangle (0.8,0.3);
    \draw[very thick, rust] (0.7,0.7) rectangle (0.8,0.8);
    \draw[very thick, rust] (0.8,0.2) rectangle (0.9,0.3);
    \draw[very thick, rust] (0.8,0.3) rectangle (0.9,0.4);
    \draw[very thick, rust] (0.8,0.6) rectangle (0.9,0.7);
    \draw[very thick, rust] (0.8,0.7) rectangle (0.9,0.8);
    \draw[very thick, rust] (0.9,0.3) rectangle (1.0,0.4);
    \draw[very thick, rust] (0.9,0.4) rectangle (1.0,0.5);
    \draw[very thick, rust] (0.9,0.5) rectangle (1.0,0.6);
    \draw[very thick, rust] (0.9,0.6) rectangle (1.0,0.7);
  \draw[thick, rust, fill=objgray] (0.75,0.5) circle (0.225);
  \end{scope}
    \draw[dashed, rust, thick] (0.8,0.3) rectangle (0.9,0.4);
    \draw[dashed, gray] (0.9,0.4) -- (1.65,0.7);
    \draw[dashed, gray] (0.9,0.3) -- (1.65,0.3);
    \draw[thick, rust] (1.65,0.3) rectangle (2.05,0.7);
    \begin{scope}
      \clip (1.65,0.3) rectangle (2.05,0.7);
      \begin{scope}[shift={(-1.55,-0.9)}, scale=4]
        \fill[rust, opacity=0.3] (0.8,0.3) rectangle (0.9,0.4);
        \fill[objgray] (0.75,0.5) circle (0.225);
      \end{scope}
    \end{scope}
    \node[rust, anchor=south] at (1.85,0.72) {\footnotesize $\alpha \ll 1$};
\end{tikzpicture}

%% file: implicit_region.tex
\begin{tikzpicture}[scale = 4.5]
  \clip (0,0) rectangle (1.5,1);
  \draw[blue] (0,0) rectangle (1.5,1);
  \draw[step=0.05, blue] (0,0) grid (1.5,1);
  \fill[orange, opacity=0.45] (0.5,0.4) rectangle (0.55,0.45);
  \fill[orange, opacity=0.45] (0.5,0.45) rectangle (0.55,0.5);
  \fill[orange, opacity=0.45] (0.5,0.5) rectangle (0.55,0.55);
  \fill[orange, opacity=0.45] (0.5,0.55) rectangle (0.55,0.6);
  \fill[orange, opacity=0.45] (0.55,0.35) rectangle (0.6,0.4);
  \fill[orange, opacity=0.45] (0.55,0.4) rectangle (0.6,0.45);
  \fill[orange, opacity=0.45] (0.55,0.45) rectangle (0.6,0.5);
  \fill[orange, opacity=0.45] (0.55,0.5) rectangle (0.6,0.55);
  \fill[orange, opacity=0.45] (0.55,0.55) rectangle (0.6,0.6);
  \fill[orange, opacity=0.45] (0.55,0.6) rectangle (0.6,0.65);
  \fill[orange, opacity=0.45] (0.6,0.3) rectangle (0.65,0.35);
  \fill[orange, opacity=0.45] (0.6,0.35) rectangle (0.65,0.4);
  \fill[orange, opacity=0.45] (0.6,0.4) rectangle (0.65,0.45);
  \fill[orange, opacity=0.45] (0.6,0.45) rectangle (0.65,0.5);
  \fill[orange, opacity=0.45] (0.6,0.5) rectangle (0.65,0.55);
  \fill[orange, opacity=0.45] (0.6,0.55) rectangle (0.65,0.6);
  \fill[orange, opacity=0.45] (0.6,0.6) rectangle (0.65,0.65);
  \fill[orange, opacity=0.45] (0.6,0.65) rectangle (0.65,0.7);
  \fill[orange, opacity=0.45] (0.65,0.25) rectangle (0.7,0.3);
  \fill[orange, opacity=0.45] (0.65,0.3) rectangle (0.7,0.35);
  \fill[orange, opacity=0.45] (0.65,0.35) rectangle (0.7,0.4);
  \fill[orange, opacity=0.45] (0.65,0.4) rectangle (0.7,0.45);
  \fill[orange, opacity=0.45] (0.65,0.55) rectangle (0.7,0.6);
  \fill[orange, opacity=0.45] (0.65,0.6) rectangle (0.7,0.65);
  \fill[orange, opacity=0.45] (0.65,0.65) rectangle (0.7,0.7);
  \fill[orange, opacity=0.45] (0.65,0.7) rectangle (0.7,0.75);
  \fill[orange, opacity=0.45] (0.7,0.25) rectangle (0.75,0.3);
  \fill[orange, opacity=0.45] (0.7,0.3) rectangle (0.75,0.35);
  \fill[orange, opacity=0.45] (0.7,0.35) rectangle (0.75,0.4);
  \fill[orange, opacity=0.45] (0.7,0.6) rectangle (0.75,0.65);
  \fill[orange, opacity=0.45] (0.7,0.65) rectangle (0.75,0.7);
  \fill[orange, opacity=0.45] (0.7,0.7) rectangle (0.75,0.75);
  \fill[orange, opacity=0.45] (0.75,0.25) rectangle (0.8,0.3);
  \fill[orange, opacity=0.45] (0.75,0.3) rectangle (0.8,0.35);
  \fill[orange, opacity=0.45] (0.75,0.35) rectangle (0.8,0.4);
  \fill[orange, opacity=0.45] (0.75,0.6) rectangle (0.8,0.65);
  \fill[orange, opacity=0.45] (0.75,0.65) rectangle (0.8,0.7);
  \fill[orange, opacity=0.45] (0.75,0.7) rectangle (0.8,0.75);
  \fill[orange, opacity=0.45] (0.8,0.25) rectangle (0.85,0.3);
  \fill[orange, opacity=0.45] (0.8,0.3) rectangle (0.85,0.35);
  \fill[orange, opacity=0.45] (0.8,0.35) rectangle (0.85,0.4);
  \fill[orange, opacity=0.45] (0.8,0.4) rectangle (0.85,0.45);
  \fill[orange, opacity=0.45] (0.8,0.55) rectangle (0.85,0.6);
  \fill[orange, opacity=0.45] (0.8,0.6) rectangle (0.85,0.65);
  \fill[orange, opacity=0.45] (0.8,0.65) rectangle (0.85,0.7);
  \fill[orange, opacity=0.45] (0.8,0.7) rectangle (0.85,0.75);
  \fill[orange, opacity=0.45] (0.85,0.3) rectangle (0.9,0.35);
  \fill[orange, opacity=0.45] (0.85,0.35) rectangle (0.9,0.4);
  \fill[orange, opacity=0.45] (0.85,0.4) rectangle (0.9,0.45);
  \fill[orange, opacity=0.45] (0.85,0.45) rectangle (0.9,0.5);
  \fill[orange, opacity=0.45] (0.85,0.5) rectangle (0.9,0.55);
  \fill[orange, opacity=0.45] (0.85,0.55) rectangle (0.9,0.6);
  \fill[orange, opacity=0.45] (0.85,0.6) rectangle (0.9,0.65);
  \fill[orange, opacity=0.45] (0.85,0.65) rectangle (0.9,0.7);
  \fill[orange, opacity=0.45] (0.9,0.35) rectangle (0.95,0.4);
  \fill[orange, opacity=0.45] (0.9,0.4) rectangle (0.95,0.45);
  \fill[orange, opacity=0.45] (0.9,0.45) rectangle (0.95,0.5);
  \fill[orange, opacity=0.45] (0.9,0.5) rectangle (0.95,0.55);
  \fill[orange, opacity=0.45] (0.9,0.55) rectangle (0.95,0.6);
  \fill[orange, opacity=0.45] (0.9,0.6) rectangle (0.95,0.65);
  \fill[orange, opacity=0.45] (0.95,0.4) rectangle (1.0,0.45);
  \fill[orange, opacity=0.45] (0.95,0.45) rectangle (1.0,0.5);
  \fill[orange, opacity=0.45] (0.95,0.5) rectangle (1.0,0.55);
  \fill[orange, opacity=0.45] (0.95,0.55) rectangle (1.0,0.6);
  \draw[orange] (0.5,0.4) rectangle (0.55,0.45);
  \draw[orange] (0.5,0.45) rectangle (0.55,0.5);
  \draw[orange] (0.5,0.5) rectangle (0.55,0.55);
  \draw[orange] (0.5,0.55) rectangle (0.55,0.6);
  \draw[orange] (0.55,0.35) rectangle (0.6,0.4);
  \draw[orange] (0.55,0.4) rectangle (0.6,0.45);
  \draw[orange] (0.55,0.45) rectangle (0.6,0.5);
  \draw[orange] (0.55,0.5) rectangle (0.6,0.55);
  \draw[orange] (0.55,0.55) rectangle (0.6,0.6);
  \draw[orange] (0.55,0.6) rectangle (0.6,0.65);
  \draw[orange] (0.6,0.3) rectangle (0.65,0.35);
  \draw[orange] (0.6,0.35) rectangle (0.65,0.4);
  \draw[orange] (0.6,0.4) rectangle (0.65,0.45);
  \draw[orange] (0.6,0.45) rectangle (0.65,0.5);
  \draw[orange] (0.6,0.5) rectangle (0.65,0.55);
  \draw[orange] (0.6,0.55) rectangle (0.65,0.6);
  \draw[orange] (0.6,0.6) rectangle (0.65,0.65);
  \draw[orange] (0.6,0.65) rectangle (0.65,0.7);
  \draw[orange] (0.65,0.25) rectangle (0.7,0.3);
  \draw[orange] (0.65,0.3) rectangle (0.7,0.35);
  \draw[orange] (0.65,0.35) rectangle (0.7,0.4);
  \draw[orange] (0.65,0.4) rectangle (0.7,0.45);
  \draw[orange] (0.65,0.55) rectangle (0.7,0.6);
  \draw[orange] (0.65,0.6) rectangle (0.7,0.65);
  \draw[orange] (0.65,0.65) rectangle (0.7,0.7);
  \draw[orange] (0.65,0.7) rectangle (0.7,0.75);
  \draw[orange] (0.7,0.25) rectangle (0.75,0.3);
  \draw[orange] (0.7,0.3) rectangle (0.75,0.35);
  \draw[orange] (0.7,0.35) rectangle (0.75,0.4);
  \draw[orange] (0.7,0.6) rectangle (0.75,0.65);
  \draw[orange] (0.7,0.65) rectangle (0.75,0.7);
  \draw[orange] (0.7,0.7) rectangle (0.75,0.75);
  \draw[orange] (0.75,0.25) rectangle (0.8,0.3);
  \draw[orange] (0.75,0.3) rectangle (0.8,0.35);
  \draw[orange] (0.75,0.35) rectangle (0.8,0.4);
  \draw[orange] (0.75,0.6) rectangle (0.8,0.65);
  \draw[orange] (0.75,0.65) rectangle (0.8,0.7);
  \draw[orange] (0.75,0.7) rectangle (0.8,0.75);
  \draw[orange] (0.8,0.25) rectangle (0.85,0.3);
  \draw[orange] (0.8,0.3) rectangle (0.85,0.35);
  \draw[orange] (0.8,0.35) rectangle (0.85,0.4);
  \draw[orange] (0.8,0.4) rectangle (0.85,0.45);
  \draw[orange] (0.8,0.55) rectangle (0.85,0.6);
  \draw[orange] (0.8,0.6) rectangle (0.85,0.65);
  \draw[orange] (0.8,0.65) rectangle (0.85,0.7);
  \draw[orange] (0.8,0.7) rectangle (0.85,0.75);
  \draw[orange] (0.85,0.3) rectangle (0.9,0.35);
  \draw[orange] (0.85,0.35) rectangle (0.9,0.4);
  \draw[orange] (0.85,0.4) rectangle (0.9,0.45);
  \draw[orange] (0.85,0.45) rectangle (0.9,0.5);
  \draw[orange] (0.85,0.5) rectangle (0.9,0.55);
  \draw[orange] (0.85,0.55) rectangle (0.9,0.6);
  \draw[orange] (0.85,0.6) rectangle (0.9,0.65);
  \draw[orange] (0.85,0.65) rectangle (0.9,0.7);
  \draw[orange] (0.9,0.35) rectangle (0.95,0.4);
  \draw[orange] (0.9,0.4) rectangle (0.95,0.45);
  \draw[orange] (0.9,0.45) rectangle (0.95,0.5);
  \draw[orange] (0.9,0.5) rectangle (0.95,0.55);
  \draw[orange] (0.9,0.55) rectangle (0.95,0.6);
  \draw[orange] (0.9,0.6) rectangle (0.95,0.65);
  \draw[orange] (0.95,0.4) rectangle (1.0,0.45);
  \draw[orange] (0.95,0.45) rectangle (1.0,0.5);
  \draw[orange] (0.95,0.5) rectangle (1.0,0.55);
  \draw[orange] (0.95,0.55) rectangle (1.0,0.6);
  \draw[objgray, fill=objgray] (0.75,0.5) circle (0.16875);
\end{tikzpicture}

%% file: mixed_idea.tex
\begin{tikzpicture}[every node/.style={font=\small}]
  \def\hbot{0}
  \def\htop{1.2}
  \def\xcut{4.356} %
  \def\over{0.1} %
  \def\edgew{1mm}
  \def\edgeeps{0.05} %
  \draw[dash pattern=on 2pt off 2pt] (-0.69,\hbot) -- (0,\hbot);
  \draw[dash pattern=on 2pt off 2pt] (-0.69,\htop) -- (0,\htop);
  \draw[dash pattern=on 2pt off 2pt] (7.92,\hbot) -- (8.61,\hbot);
  \draw[dash pattern=on 2pt off 2pt] (7.92,\htop) -- (8.61,\htop);
  \draw[thick] (0,\hbot) -- (7.92,\hbot);
  \draw[thick] (0,\htop) -- (7.92,\htop);
  \foreach \xa/\xb/\col in {%
    0/1.32/blue,%
    1.32/2.64/blue,%
    2.64/3.96/orange,%
    3.96/4.356/orange,%
    4.356/5.28/orange,%
    5.28/6.6/blue,%
    6.6/7.92/blue%
  }{
    \draw[\col,line width=\edgew,opacity=0.4] (\xa+\edgeeps,\hbot) -- (\xa+\edgeeps,\htop);
    \draw[\col,line width=\edgew,opacity=0.4] (\xb-\edgeeps,\hbot) -- (\xb-\edgeeps,\htop);
  }
  \foreach \x in {0,1.32,2.64,3.96,5.28,6.6,7.92}
    \draw (\x,\hbot-\over) -- (\x,\htop+\over);
  \draw (\xcut,\hbot-\over) -- (\xcut,\htop+\over);
  \node[blue] at (0.66,0.6) {$p_h$};
  \node[blue] at (1.98,0.6) {$p_h$};
  \node[orange] at (3.3,0.6) {$q_h$};
  \node[orange,font=\scriptsize] at (4.158,0.6) {$q_h$}; %
  \node[orange] at (4.818,0.6) {$q_h$};
  \node[blue] at (5.94,0.6) {$p_h$};
  \node[blue] at (7.26,0.6) {$p_h$};
  \draw[->,thick] (-1.05,-0.5) -- (-1.05,1.7) node[above] {$t$};
  \draw (-1.1,\hbot) -- (-1.0,\hbot);
  \node[left] at (-1.15,\hbot) {$t^n$};
  \draw (-1.1,\htop) -- (-1.0,\htop);
  \node[left] at (-1.15,\htop) {$t^{n+1}$};
  \draw[->,thick] (4.158,-0.975) -- (4.158,-0.05);
  \node[below] at (4.158,-0.975) {cut cell};
  \draw[->,thick] (2.64,-0.475) -- (2.64,-0.05);
  \node[below] at (2.64,-0.475) {$\mathcal{F}(\textcolor{blue}{p_h},\textcolor{orange}{q_h})$};
  \draw[->,thick] (5.28,-0.475) -- (5.28,-0.05);
  \node[below] at (5.28,-0.475) {$\mathcal{F}(\textcolor{orange}{q_h},\textcolor{blue}{p_h})$};
\end{tikzpicture}

%% file: step1.tex
\begin{tikzpicture}[scale=0.95,transform shape,every node/.style={font=\small}]
    \def\hbot{0}
    \def\htop{1.2}
    \def\xcut{4.356}
    \def\over{0.1}
    \draw[dash pattern=on 2pt off 2pt] (-0.69,\hbot) -- (0,\hbot);
    \draw[dash pattern=on 2pt off 2pt] (-0.69,\htop) -- (0,\htop);
    \draw[dash pattern=on 2pt off 2pt] (7.92,\hbot) -- (8.61,\hbot);
    \draw[dash pattern=on 2pt off 2pt] (7.92,\htop) -- (8.61,\htop);
    \draw[thick] (0,\hbot) -- (7.92,\hbot);
    \draw[thick] (0,\htop) -- (7.92,\htop);
    \foreach \x in {0,1.32,2.64,3.96,5.28,6.6,7.92}
      \draw (\x,\hbot-\over) -- (\x,\htop+\over);
    \draw (\xcut,\hbot-\over) -- (\xcut,\htop+\over);
    \node[blue] at (0.66,0.6) {$p_h$};
    \node[blue] at (1.98,0.6) {$p_h$};
    \node[blue] at (5.94,0.6) {$p_h$};
    \node[blue] at (7.26,0.6) {$p_h$};
    \draw[->,thick] (-1.05,-0.25) -- (-1.05,1.45) node[above] {$t$};
    \draw (-1.1,\hbot) -- (-1.0,\hbot);
    \node[left] at (-1.15,\hbot) {$t^n$};
    \draw (-1.1,\htop) -- (-1.0,\htop);
    \node[left] at (-1.15,\htop) {$t^{n+1}$};
  \end{tikzpicture}

%% file: step2.tex
\begin{tikzpicture}[scale=0.95,transform shape,every node/.style={font=\small}]
    \def\hbot{0}
    \def\htop{1.2}
    \def\xcut{4.356}
    \def\over{0.1}
    \def\edgew{1mm}
    \def\edgeeps{0.05}
    \draw[dash pattern=on 2pt off 2pt] (-0.69,\hbot) -- (0,\hbot);
    \draw[dash pattern=on 2pt off 2pt] (-0.69,\htop) -- (0,\htop);
    \draw[dash pattern=on 2pt off 2pt] (7.92,\hbot) -- (8.61,\hbot);
    \draw[dash pattern=on 2pt off 2pt] (7.92,\htop) -- (8.61,\htop);
    \draw[thick] (0,\hbot) -- (7.92,\hbot);
    \draw[thick] (0,\htop) -- (7.92,\htop);
    \foreach \xa/\xb in {2.64/3.96,3.96/4.356,4.356/5.28}{
      \draw[orange,line width=\edgew,opacity=0.4] (\xa+\edgeeps,\hbot) -- (\xa+\edgeeps,\htop);
      \draw[orange,line width=\edgew,opacity=0.4] (\xb-\edgeeps,\hbot) -- (\xb-\edgeeps,\htop);
    }
    \draw[blue,line width=\edgew,opacity=0.4] (2.64-\edgeeps,\hbot) -- (2.64-\edgeeps,\htop);
    \draw[blue,line width=\edgew,opacity=0.4] (5.28+\edgeeps,\hbot) -- (5.28+\edgeeps,\htop);
    \draw[orange,line width=0.6mm] (2.64,\htop) -- (5.28,\htop);
    \foreach \x in {0,1.32,2.64,3.96,5.28,6.6,7.92}
      \draw (\x,\hbot-\over) -- (\x,\htop+\over);
    \draw (\xcut,\hbot-\over) -- (\xcut,\htop+\over);
    \node[orange] at (3.3,0.6) {$q_h$};
    \node[orange,font=\scriptsize] at (4.158,0.6) {$q_h$}; %
    \node[orange] at (4.818,0.6) {$q_h$};
    \draw[->,thick] (-1.05,-0.25) -- (-1.05,1.45) node[above] {$t$};
    \draw (-1.1,\hbot) -- (-1.0,\hbot);
    \node[left] at (-1.15,\hbot) {$t^n$};
    \draw (-1.1,\htop) -- (-1.0,\htop);
    \node[left] at (-1.15,\htop) {$t^{n+1}$};
  \end{tikzpicture}

%% file: step3.tex
\begin{tikzpicture}[scale=0.95,transform shape,every node/.style={font=\small}]
    \def\hbot{0}
    \def\htop{1.2}
    \def\xcut{4.356}
    \def\over{0.1}
    \def\edgew{1mm}
    \def\edgeeps{0.05}
    \draw[dash pattern=on 2pt off 2pt] (-0.69,\hbot) -- (0,\hbot);
    \draw[dash pattern=on 2pt off 2pt] (-0.69,\htop) -- (0,\htop);
    \draw[dash pattern=on 2pt off 2pt] (7.92,\hbot) -- (8.61,\hbot);
    \draw[dash pattern=on 2pt off 2pt] (7.92,\htop) -- (8.61,\htop);
    \draw[thick] (0,\hbot) -- (7.92,\hbot);
    \draw[thick] (0,\htop) -- (7.92,\htop);
    \foreach \xa/\xb in {0/1.32,1.32/2.64,5.28/6.6,6.6/7.92}{
      \draw[blue,line width=\edgew,opacity=0.4] (\xa+\edgeeps,\hbot) -- (\xa+\edgeeps,\htop);
      \draw[blue,line width=\edgew,opacity=0.4] (\xb-\edgeeps,\hbot) -- (\xb-\edgeeps,\htop);
    }
    \draw[orange,line width=\edgew,opacity=0.4] (2.64+\edgeeps,\hbot) -- (2.64+\edgeeps,\htop);
    \draw[orange,line width=\edgew,opacity=0.4] (5.28-\edgeeps,\hbot) -- (5.28-\edgeeps,\htop);
    \draw[blue,line width=0.6mm] (0,\htop) -- (2.64,\htop);
    \draw[blue,line width=0.6mm] (5.28,\htop) -- (7.92,\htop);
    \foreach \x in {0,1.32,2.64,3.96,5.28,6.6,7.92}
      \draw (\x,\hbot-\over) -- (\x,\htop+\over);
    \draw (\xcut,\hbot-\over) -- (\xcut,\htop+\over);
    \draw[->,thick] (-1.05,-0.25) -- (-1.05,1.45) node[above] {$t$};
    \draw (-1.1,\hbot) -- (-1.0,\hbot);
    \node[left] at (-1.15,\hbot) {$t^n$};
    \draw (-1.1,\htop) -- (-1.0,\htop);
    \node[left] at (-1.15,\htop) {$t^{n+1}$};
  \end{tikzpicture}

%% file: mesh_1d.tex
\providecommand{\meshrow}[4]{%
  \draw[blue] (0,#1) -- (9,#1);
  \foreach \bx in {#3} {
    \foreach \i in {1,...,19} {
      \draw[blue] ({\bx+\i*#2},{#1-0.15}) -- ({\bx+\i*#2},{#1+0.15});
    }
    \fill[rust, opacity=0.4] ({\bx+9*#2},{#1-0.2}) rectangle ({\bx+9.3*#2},{#1+0.2});
    \draw[rust, thick] ({\bx+9*#2},{#1-0.2}) -- ({\bx+9*#2},{#1+0.2});
    \draw[rust, thick] ({\bx+9.3*#2},{#1-0.2}) -- ({\bx+9.3*#2},{#1+0.2});
  }
  \foreach \bx in {#4} {
    \draw[very thick] (\bx,{#1-0.28}) -- (\bx,{#1+0.28});
  }
}
\begin{tikzpicture}[every node/.style={font=\small}]
  \meshrow{0.98}{0.45}{0}{0,9}
  \meshrow{0}{0.225}{0,4.5}{0,4.5,9}
  \node[anchor=east] at (-0.3,0.98) {$L=1,\; I_{\max}=20$};
  \node[anchor=east] at (-0.3,0)   {$L=2,\; I_{\max}=40$};
  \foreach \cx in {2.06, 6.56} {
    \draw[->, thick] (\cx,-0.58) -- (\cx,-0.28);
    \node[below] at (\cx,-0.58) {cut cell};
  }
\end{tikzpicture}

%% file: onedim_errors.tex
\begin{tabular}{c c rc rc rc}
\toprule
 & & \multicolumn{2}{c}{ADER-DG} & \multicolumn{2}{c}{space-time DG} & \multicolumn{2}{c}{ADER-ZIST} \\
\cmidrule(lr){3-4}\cmidrule(lr){5-6}\cmidrule(lr){7-8}
$N$ & $I_{\max}$ & error & order & error & order & error & order \\
\midrule
\multicolumn{8}{l}{\textit{$L^1$ error after one time step}} \\
1 & 40 & $4.51\cdot 10^{-3\phantom{0}}$ & -- & $4.67\cdot 10^{-3\phantom{0}}$ & -- & $4.51\cdot 10^{-3\phantom{0}}$ & -- \\
 & 80 & $1.13\cdot 10^{-3\phantom{0}}$ & 2.00 & $1.17\cdot 10^{-3\phantom{0}}$ & 2.00 & $1.13\cdot 10^{-3\phantom{0}}$ & 1.99 \\
 & 160 & $2.82\cdot 10^{-4\phantom{0}}$ & 2.00 & $2.91\cdot 10^{-4\phantom{0}}$ & 2.00 & $2.83\cdot 10^{-4\phantom{0}}$ & 2.00 \\
 & 320 & $7.04\cdot 10^{-5\phantom{0}}$ & 2.00 & $7.28\cdot 10^{-5\phantom{0}}$ & 2.00 & $7.06\cdot 10^{-5\phantom{0}}$ & 2.00 \\
\addlinespace
2 & 40 & $1.25\cdot 10^{-4\phantom{0}}$ & -- & $1.30\cdot 10^{-4\phantom{0}}$ & -- & $1.26\cdot 10^{-4\phantom{0}}$ & -- \\
 & 80 & $1.56\cdot 10^{-5\phantom{0}}$ & 3.01 & $1.61\cdot 10^{-5\phantom{0}}$ & 3.01 & $1.56\cdot 10^{-5\phantom{0}}$ & 3.02 \\
 & 160 & $1.94\cdot 10^{-6\phantom{0}}$ & 3.00 & $2.01\cdot 10^{-6\phantom{0}}$ & 3.00 & $1.95\cdot 10^{-6\phantom{0}}$ & 3.00 \\
 & 320 & $2.43\cdot 10^{-7\phantom{0}}$ & 3.00 & $2.51\cdot 10^{-7\phantom{0}}$ & 3.00 & $2.44\cdot 10^{-7\phantom{0}}$ & 3.00 \\
\addlinespace
3 & 40 & $2.34\cdot 10^{-6\phantom{0}}$ & -- & $2.42\cdot 10^{-6\phantom{0}}$ & -- & $2.34\cdot 10^{-6\phantom{0}}$ & -- \\
 & 80 & $1.46\cdot 10^{-7\phantom{0}}$ & 4.01 & $1.51\cdot 10^{-7\phantom{0}}$ & 4.01 & $1.46\cdot 10^{-7\phantom{0}}$ & 4.00 \\
 & 160 & $9.08\cdot 10^{-9\phantom{0}}$ & 4.00 & $9.41\cdot 10^{-9\phantom{0}}$ & 4.00 & $9.12\cdot 10^{-9\phantom{0}}$ & 4.00 \\
 & 320 & $5.67\cdot 10^{-10}$ & 4.00 & $5.88\cdot 10^{-10}$ & 4.00 & $5.70\cdot 10^{-10}$ & 4.00 \\
\midrule
\multicolumn{8}{l}{\textit{$L^\infty$ error after one time step}} \\
1 & 40 & $1.21\cdot 10^{-2\phantom{0}}$ & -- & $1.27\cdot 10^{-2\phantom{0}}$ & -- & $1.21\cdot 10^{-2\phantom{0}}$ & -- \\
 & 80 & $3.04\cdot 10^{-3\phantom{0}}$ & 1.99 & $3.19\cdot 10^{-3\phantom{0}}$ & 1.99 & $3.19\cdot 10^{-3\phantom{0}}$ & 1.92 \\
 & 160 & $7.60\cdot 10^{-4\phantom{0}}$ & 2.00 & $7.98\cdot 10^{-4\phantom{0}}$ & 2.00 & $7.60\cdot 10^{-4\phantom{0}}$ & 2.07 \\
 & 320 & $1.90\cdot 10^{-4\phantom{0}}$ & 2.00 & $2.00\cdot 10^{-4\phantom{0}}$ & 2.00 & $1.90\cdot 10^{-4\phantom{0}}$ & 2.00 \\
\addlinespace
2 & 40 & $1.87\cdot 10^{-4\phantom{0}}$ & -- & $2.14\cdot 10^{-4\phantom{0}}$ & -- & $2.36\cdot 10^{-4\phantom{0}}$ & -- \\
 & 80 & $2.36\cdot 10^{-5\phantom{0}}$ & 2.99 & $2.61\cdot 10^{-5\phantom{0}}$ & 3.04 & $2.36\cdot 10^{-5\phantom{0}}$ & 3.32 \\
 & 160 & $2.96\cdot 10^{-6\phantom{0}}$ & 3.00 & $3.25\cdot 10^{-6\phantom{0}}$ & 3.01 & $2.96\cdot 10^{-6\phantom{0}}$ & 3.00 \\
 & 320 & $3.70\cdot 10^{-7\phantom{0}}$ & 3.00 & $4.05\cdot 10^{-7\phantom{0}}$ & 3.00 & $4.23\cdot 10^{-7\phantom{0}}$ & 2.80 \\
\addlinespace
3 & 40 & $8.07\cdot 10^{-6\phantom{0}}$ & -- & $9.23\cdot 10^{-6\phantom{0}}$ & -- & $8.07\cdot 10^{-6\phantom{0}}$ & -- \\
 & 80 & $5.08\cdot 10^{-7\phantom{0}}$ & 3.99 & $5.81\cdot 10^{-7\phantom{0}}$ & 3.99 & $5.81\cdot 10^{-7\phantom{0}}$ & 3.80 \\
 & 160 & $3.18\cdot 10^{-8\phantom{0}}$ & 4.00 & $3.64\cdot 10^{-8\phantom{0}}$ & 4.00 & $3.18\cdot 10^{-8\phantom{0}}$ & 4.19 \\
 & 320 & $1.99\cdot 10^{-9\phantom{0}}$ & 4.00 & $2.27\cdot 10^{-9\phantom{0}}$ & 4.00 & $2.11\cdot 10^{-9\phantom{0}}$ & 3.92 \\
\midrule
\multicolumn{8}{l}{\textit{$L^1$ error at $t = T$}} \\
1 & 40 & $1.68\cdot 10^{-2\phantom{0}}$ & -- & $9.37\cdot 10^{-3\phantom{0}}$ & -- & $1.55\cdot 10^{-2\phantom{0}}$ & -- \\
 & 80 & $4.10\cdot 10^{-3\phantom{0}}$ & 2.03 & $1.90\cdot 10^{-3\phantom{0}}$ & 2.30 & $3.83\cdot 10^{-3\phantom{0}}$ & 2.02 \\
 & 160 & $1.02\cdot 10^{-3\phantom{0}}$ & 2.01 & $4.28\cdot 10^{-4\phantom{0}}$ & 2.15 & $9.34\cdot 10^{-4\phantom{0}}$ & 2.03 \\
 & 320 & $2.54\cdot 10^{-4\phantom{0}}$ & 2.00 & $1.02\cdot 10^{-4\phantom{0}}$ & 2.07 & $2.34\cdot 10^{-4\phantom{0}}$ & 2.00 \\
\addlinespace
2 & 40 & $3.45\cdot 10^{-4\phantom{0}}$ & -- & $1.56\cdot 10^{-4\phantom{0}}$ & -- & $3.20\cdot 10^{-4\phantom{0}}$ & -- \\
 & 80 & $4.27\cdot 10^{-5\phantom{0}}$ & 3.01 & $1.94\cdot 10^{-5\phantom{0}}$ & 3.01 & $3.92\cdot 10^{-5\phantom{0}}$ & 3.03 \\
 & 160 & $5.33\cdot 10^{-6\phantom{0}}$ & 3.00 & $2.42\cdot 10^{-6\phantom{0}}$ & 3.00 & $4.91\cdot 10^{-6\phantom{0}}$ & 3.00 \\
 & 320 & $6.65\cdot 10^{-7\phantom{0}}$ & 3.00 & $3.02\cdot 10^{-7\phantom{0}}$ & 3.00 & $6.14\cdot 10^{-7\phantom{0}}$ & 3.00 \\
\addlinespace
3 & 40 & $5.22\cdot 10^{-6\phantom{0}}$ & -- & $2.99\cdot 10^{-6\phantom{0}}$ & -- & $4.88\cdot 10^{-6\phantom{0}}$ & -- \\
 & 80 & $3.28\cdot 10^{-7\phantom{0}}$ & 3.99 & $1.86\cdot 10^{-7\phantom{0}}$ & 4.00 & $3.08\cdot 10^{-7\phantom{0}}$ & 3.98 \\
 & 160 & $2.05\cdot 10^{-8\phantom{0}}$ & 4.00 & $1.16\cdot 10^{-8\phantom{0}}$ & 4.00 & $1.92\cdot 10^{-8\phantom{0}}$ & 4.00 \\
 & 320 & $1.28\cdot 10^{-9\phantom{0}}$ & 4.00 & $7.27\cdot 10^{-10}$ & 4.00 & $1.20\cdot 10^{-9\phantom{0}}$ & 4.00 \\
\midrule
\multicolumn{8}{l}{\textit{$L^\infty$ error at $t = T$}} \\
1 & 40 & $1.44\cdot 10^{-2\phantom{0}}$ & -- & $1.08\cdot 10^{-2\phantom{0}}$ & -- & $1.35\cdot 10^{-2\phantom{0}}$ & -- \\
 & 80 & $4.03\cdot 10^{-3\phantom{0}}$ & 1.83 & $3.42\cdot 10^{-3\phantom{0}}$ & 1.66 & $3.84\cdot 10^{-3\phantom{0}}$ & 1.81 \\
 & 160 & $1.06\cdot 10^{-3\phantom{0}}$ & 1.93 & $9.42\cdot 10^{-4\phantom{0}}$ & 1.86 & $1.17\cdot 10^{-3\phantom{0}}$ & 1.71 \\
 & 320 & $2.71\cdot 10^{-4\phantom{0}}$ & 1.96 & $2.46\cdot 10^{-4\phantom{0}}$ & 1.94 & $3.02\cdot 10^{-4\phantom{0}}$ & 1.96 \\
\addlinespace
2 & 40 & $4.29\cdot 10^{-4\phantom{0}}$ & -- & $5.11\cdot 10^{-4\phantom{0}}$ & -- & $5.44\cdot 10^{-4\phantom{0}}$ & -- \\
 & 80 & $5.39\cdot 10^{-5\phantom{0}}$ & 2.99 & $6.44\cdot 10^{-5\phantom{0}}$ & 2.99 & $5.23\cdot 10^{-5\phantom{0}}$ & 3.38 \\
 & 160 & $6.74\cdot 10^{-6\phantom{0}}$ & 3.00 & $8.07\cdot 10^{-6\phantom{0}}$ & 3.00 & $8.32\cdot 10^{-6\phantom{0}}$ & 2.65 \\
 & 320 & $8.43\cdot 10^{-7\phantom{0}}$ & 3.00 & $1.01\cdot 10^{-6\phantom{0}}$ & 3.00 & $1.12\cdot 10^{-6\phantom{0}}$ & 2.90 \\
\addlinespace
3 & 40 & $7.74\cdot 10^{-6\phantom{0}}$ & -- & $1.15\cdot 10^{-5\phantom{0}}$ & -- & $7.58\cdot 10^{-6\phantom{0}}$ & -- \\
 & 80 & $4.83\cdot 10^{-7\phantom{0}}$ & 4.00 & $7.23\cdot 10^{-7\phantom{0}}$ & 3.99 & $7.41\cdot 10^{-7\phantom{0}}$ & 3.36 \\
 & 160 & $3.02\cdot 10^{-8\phantom{0}}$ & 4.00 & $4.53\cdot 10^{-8\phantom{0}}$ & 4.00 & $4.06\cdot 10^{-8\phantom{0}}$ & 4.19 \\
 & 320 & $1.89\cdot 10^{-9\phantom{0}}$ & 4.00 & $2.83\cdot 10^{-9\phantom{0}}$ & 4.00 & $2.90\cdot 10^{-9\phantom{0}}$ & 3.81 \\
\bottomrule
\end{tabular}

%% file: onedim_coupling.tex
\begin{tabular}{c r rc rc rc}
\toprule
 & & \multicolumn{2}{c}{$D$} & \multicolumn{2}{c}{$e_{\rm ADER}$} & \multicolumn{2}{c}{$e_{\rm ADER\text{-}ZIST}$} \\
\cmidrule(lr){3-4}\cmidrule(lr){5-6}\cmidrule(lr){7-8}
$N$ & $I_{\max}$ & value & order & value & order & value & order \\
\midrule
1 & 40 & $1.01\cdot 10^{-3\phantom{0}}$ & -- & $2.60\cdot 10^{-3\phantom{0}}$ & -- & $2.11\cdot 10^{-3\phantom{0}}$ & -- \\
 & 80 & $2.53\cdot 10^{-4\phantom{0}}$ & 1.99 & $6.56\cdot 10^{-4\phantom{0}}$ & 1.99 & $5.30\cdot 10^{-4\phantom{0}}$ & 1.99 \\
 & 160 & $6.34\cdot 10^{-5\phantom{0}}$ & 2.00 & $1.64\cdot 10^{-4\phantom{0}}$ & 2.00 & $1.33\cdot 10^{-4\phantom{0}}$ & 2.00 \\
 & 320 & $1.58\cdot 10^{-5\phantom{0}}$ & 2.00 & $4.11\cdot 10^{-5\phantom{0}}$ & 2.00 & $3.32\cdot 10^{-5\phantom{0}}$ & 2.00 \\
 & 640 & $3.96\cdot 10^{-6\phantom{0}}$ & 2.00 & $1.03\cdot 10^{-5\phantom{0}}$ & 2.00 & $8.30\cdot 10^{-6\phantom{0}}$ & 2.00 \\
\addlinespace
2 & 40 & $3.55\cdot 10^{-5\phantom{0}}$ & -- & $7.52\cdot 10^{-5\phantom{0}}$ & -- & $8.64\cdot 10^{-5\phantom{0}}$ & -- \\
 & 80 & $4.45\cdot 10^{-6\phantom{0}}$ & 3.00 & $9.36\cdot 10^{-6\phantom{0}}$ & 3.01 & $1.08\cdot 10^{-5\phantom{0}}$ & 3.00 \\
 & 160 & $5.56\cdot 10^{-7\phantom{0}}$ & 3.00 & $1.17\cdot 10^{-6\phantom{0}}$ & 3.00 & $1.35\cdot 10^{-6\phantom{0}}$ & 3.00 \\
 & 320 & $6.95\cdot 10^{-8\phantom{0}}$ & 3.00 & $1.46\cdot 10^{-7\phantom{0}}$ & 3.00 & $1.69\cdot 10^{-7\phantom{0}}$ & 3.00 \\
 & 640 & $8.69\cdot 10^{-9\phantom{0}}$ & 3.00 & $1.82\cdot 10^{-8\phantom{0}}$ & 3.00 & $2.11\cdot 10^{-8\phantom{0}}$ & 3.00 \\
\addlinespace
3 & 40 & $8.83\cdot 10^{-7\phantom{0}}$ & -- & $3.94\cdot 10^{-6\phantom{0}}$ & -- & $3.09\cdot 10^{-6\phantom{0}}$ & -- \\
 & 80 & $5.58\cdot 10^{-8\phantom{0}}$ & 3.98 & $2.51\cdot 10^{-7\phantom{0}}$ & 3.97 & $1.95\cdot 10^{-7\phantom{0}}$ & 3.98 \\
 & 160 & $3.49\cdot 10^{-9\phantom{0}}$ & 4.00 & $1.57\cdot 10^{-8\phantom{0}}$ & 4.00 & $1.22\cdot 10^{-8\phantom{0}}$ & 4.00 \\
 & 320 & $2.18\cdot 10^{-10}$ & 4.00 & $9.82\cdot 10^{-10}$ & 4.00 & $7.64\cdot 10^{-10}$ & 4.00 \\
 & 640 & $1.37\cdot 10^{-11}$ & 4.00 & $6.14\cdot 10^{-11}$ & 4.00 & $4.78\cdot 10^{-11}$ & 4.00 \\
\bottomrule
\end{tabular}

%% file: ramp.tex
\begin{tabular}{r c ccc ccc c}
\toprule
$I_{\max}$ & $\alpha_{\min}$ & L1 & L2 & L$\infty$ & $p_{\mathrm{L1}}$ & $p_{\mathrm{L2}}$ & $p_{\mathrm{L\infty}}$ & steps \\
\midrule
\multicolumn{9}{l}{\textit{$N=2$ (expected order 3)}} \\
40 & 3.10e-07 & 1.154e-04 & 1.011e-04 & 2.716e-04 & -- & -- & -- & 19 \\
60 & 6.98e-07 & 4.306e-05 & 3.766e-05 & 1.301e-04 & 2.43 & 2.44 & 1.81 & 28 \\
80 & 1.24e-06 & 2.034e-05 & 1.782e-05 & 8.187e-05 & 2.61 & 2.60 & 1.61 & 38 \\
100 & 1.94e-06 & 1.116e-05 & 9.757e-06 & 3.813e-05 & 2.69 & 2.70 & 3.42 & 47 \\
150 & 4.36e-06 & 3.633e-06 & 3.152e-06 & 1.517e-05 & 2.77 & 2.79 & 2.27 & 70 \\
\midrule
\multicolumn{9}{l}{\textit{$N=3$ (expected order 4)}} \\
40 & 3.10e-07 & 1.652e-06 & 1.544e-06 & 1.170e-05 & -- & -- & -- & 27 \\
60 & 6.98e-07 & 3.114e-07 & 3.001e-07 & 2.813e-06 & 4.12 & 4.04 & 3.52 & 40 \\
80 & 1.24e-06 & 8.992e-08 & 8.847e-08 & 6.720e-07 & 4.32 & 4.25 & 4.98 & 53 \\
100 & 1.94e-06 & 3.137e-08 & 3.231e-08 & 5.230e-07 & 4.72 & 4.51 & 1.12 & 66 \\
150 & 4.36e-06 & 6.254e-09 & 6.373e-09 & 8.731e-08 & 3.98 & 4.00 & 4.41 & 98 \\
\midrule
\multicolumn{9}{l}{\textit{$N=4$ (expected order 5)}} \\
10 & 5.54e-03 & 5.318e-05 & 5.019e-05 & 3.275e-04 & -- & -- & -- & 9 \\
25 & 9.28e-05 & 1.342e-06 & 1.172e-06 & 6.125e-06 & 4.02 & 4.10 & 4.34 & 21 \\
50 & 4.85e-07 & 6.098e-08 & 5.326e-08 & 4.356e-07 & 4.46 & 4.46 & 3.81 & 42 \\
75 & 1.07e-04 & 9.193e-09 & 8.029e-09 & 7.571e-08 & 4.67 & 4.67 & 4.32 & 63 \\
100 & 1.94e-06 & 2.327e-09 & 2.038e-09 & 3.240e-08 & 4.78 & 4.77 & 2.95 & 84 \\
\midrule
\multicolumn{9}{l}{\textit{$N=5$ (expected order 6)}} \\
10 & 5.54e-03 & 9.485e-06 & 8.448e-06 & 3.338e-05 & -- & -- & -- & 11 \\
20 & 7.76e-08 & 1.334e-07 & 1.190e-07 & 1.114e-06 & 6.15 & 6.15 & 4.90 & 21 \\
30 & 3.61e-04 & 1.009e-08 & 9.374e-09 & 7.400e-08 & 6.37 & 6.27 & 6.69 & 31 \\
40 & 3.10e-07 & 1.453e-09 & 1.431e-09 & 3.494e-08 & 6.74 & 6.53 & 2.61 & 42 \\
50 & 4.85e-07 & 3.210e-10 & 3.296e-10 & 1.758e-08 & 6.77 & 6.58 & 3.08 & 52 \\
\bottomrule
\end{tabular}

%% file: ramp_conv_L1.tex
\begin{tikzpicture}
\begin{loglogaxis}[scale only axis, width=106pt, height=106pt, xmin=0.01, xmax=0.25, ymin=1e-10, ymax=1e-2,
  ytick={1e-10,1e-8,1e-6,1e-4,1e-2}, tick label style={font=\scriptsize}, label style={font=\footnotesize}, xlabel={$h=2/I_{\max}$}, ylabel={$L^1$ error of $\rho$},
  xtick={0.01,0.02,0.05,0.1,0.2}, xticklabels={0.01,0.02,0.05,0.1,0.2}, grid=major, grid style={gray!25},
  legend columns=1, legend cell align=left, legend style={font=\footnotesize, at={(0.5,-0.31)}, anchor=north, draw=none, inner xsep=0pt},
  every axis plot/.append style={line width=0.8pt}, mark size=2pt]
\addplot+[blue, mark=o, only marks] table[x=h, y=e] {
h e
1.00000e-01 5.85236e-04
6.66667e-02 2.24187e-04
5.00000e-02 1.15432e-04
4.00000e-02 6.80062e-05
3.33333e-02 4.30634e-05
2.85714e-02 2.89443e-05
2.50000e-02 2.03365e-05
2.22222e-02 1.48211e-05
2.00000e-02 1.11596e-05
1.81818e-02 8.60781e-06
1.66667e-02 6.77098e-06
1.53846e-02 5.43053e-06
1.42857e-02 4.41605e-06
1.33333e-02 3.63331e-06
};
\addlegendentry{$N=2$: fit 2.54, nominal 3}
\addplot[blue, no marks, solid, forget plot] coordinates {(1.00000e-01,6.49932e-04) (1.33333e-02,3.92439e-06)};
\addplot[blue, no marks, dashed, forget plot] coordinates {(1.33333e-02,3.63331e-06) (1.00000e-01,1.53280e-03)};
\addplot+[red, mark=square, only marks] table[x=h, y=e] {
h e
1.00000e-01 3.03107e-05
6.66667e-02 5.42318e-06
5.00000e-02 1.65244e-06
4.00000e-02 6.64563e-07
3.33333e-02 3.11417e-07
2.85714e-02 1.62562e-07
2.50000e-02 8.99201e-08
2.22222e-02 5.15941e-08
2.00000e-02 3.13718e-08
1.81818e-02 2.00778e-08
1.66667e-02 1.39723e-08
1.53846e-02 1.01676e-08
1.42857e-02 8.00329e-09
1.33333e-02 6.25422e-09
};
\addlegendentry{$N=3$: fit 4.27, nominal 4}
\addplot[red, no marks, solid, forget plot] coordinates {(1.00000e-01,3.16695e-05) (1.33333e-02,5.82632e-09)};
\addplot[red, no marks, dashed, forget plot] coordinates {(1.33333e-02,6.25422e-09) (1.00000e-01,1.97888e-05)};
\addplot+[green!50!black, mark=triangle, only marks] table[x=h, y=e] {
h e
2.00000e-01 5.31794e-05
1.00000e-01 3.40387e-06
8.00000e-02 1.34208e-06
6.66667e-02 6.09002e-07
5.00000e-02 1.68644e-07
4.00000e-02 6.09818e-08
3.33333e-02 2.62848e-08
2.85714e-02 1.27622e-08
2.66667e-02 9.19312e-09
2.50000e-02 6.77339e-09
2.22222e-02 3.85641e-09
2.00000e-02 2.32719e-09
};
\addlegendentry{$N=4$: fit 4.41, nominal 5}
\addplot[green!50!black, no marks, solid, forget plot] coordinates {(2.00000e-01,6.79759e-05) (2.00000e-02,2.65191e-09)};
\addplot[green!50!black, no marks, dashed, forget plot] coordinates {(2.00000e-02,2.32719e-09) (2.00000e-01,2.32719e-04)};
\addplot+[violet, mark=diamond, only marks] table[x=h, y=e] {
h e
2.00000e-01 9.48549e-06
1.33333e-01 8.12151e-07
1.00000e-01 1.33377e-07
8.00000e-02 3.26585e-08
6.66667e-02 1.00914e-08
5.71429e-02 3.61483e-09
5.00000e-02 1.45337e-09
4.44444e-02 7.01526e-10
4.00000e-02 3.21036e-10
3.63636e-02 2.09184e-10
3.33333e-02 1.23716e-10
};
\addlegendentry{$N=5$: fit 6.36, nominal 6}
\addplot[violet, no marks, solid, forget plot] coordinates {(2.00000e-01,1.03557e-05) (3.33333e-02,1.16281e-10)};
\addplot[violet, no marks, dashed, forget plot] coordinates {(3.33333e-02,1.23716e-10) (2.00000e-01,5.77208e-06)};
\end{loglogaxis}
\end{tikzpicture}

%% file: ramp_conv_L2.tex
\begin{tikzpicture}
\begin{loglogaxis}[scale only axis, width=106pt, height=106pt, xmin=0.01, xmax=0.25, ymin=1e-10, ymax=1e-2,
  ytick={1e-10,1e-8,1e-6,1e-4,1e-2}, tick label style={font=\scriptsize}, label style={font=\footnotesize}, xlabel={$h=2/I_{\max}$}, ylabel={$L^2$ error of $\rho$},
  xtick={0.01,0.02,0.05,0.1,0.2}, xticklabels={0.01,0.02,0.05,0.1,0.2}, grid=major, grid style={gray!25},
  legend columns=1, legend cell align=left, legend style={font=\footnotesize, at={(0.5,-0.31)}, anchor=north, draw=none, inner xsep=0pt},
  every axis plot/.append style={line width=0.8pt}, mark size=2pt]
\addplot+[blue, mark=o, only marks] table[x=h, y=e] {
h e
1.00000e-01 5.19430e-04
6.66667e-02 1.98051e-04
5.00000e-02 1.01094e-04
4.00000e-02 5.93133e-05
3.33333e-02 3.76582e-05
2.85714e-02 2.53537e-05
2.50000e-02 1.78223e-05
2.22222e-02 1.29757e-05
2.00000e-02 9.75678e-06
1.81818e-02 7.51189e-06
1.66667e-02 5.89796e-06
1.53846e-02 4.72365e-06
1.42857e-02 3.83589e-06
1.33333e-02 3.15206e-06
};
\addlegendentry{$N=2$: fit 2.54, nominal 3}
\addplot[blue, no marks, solid, forget plot] coordinates {(1.00000e-01,5.75174e-04) (1.33333e-02,3.41035e-06)};
\addplot[blue, no marks, dashed, forget plot] coordinates {(1.33333e-02,3.15206e-06) (1.00000e-01,1.32978e-03)};
\addplot+[red, mark=square, only marks] table[x=h, y=e] {
h e
1.00000e-01 2.77070e-05
6.66667e-02 5.03238e-06
5.00000e-02 1.54414e-06
4.00000e-02 6.30424e-07
3.33333e-02 3.00117e-07
2.85714e-02 1.57723e-07
2.50000e-02 8.84686e-08
2.22222e-02 5.19999e-08
2.00000e-02 3.23086e-08
1.81818e-02 2.10821e-08
1.66667e-02 1.47582e-08
1.53846e-02 1.07526e-08
1.42857e-02 8.26393e-09
1.33333e-02 6.37257e-09
};
\addlegendentry{$N=3$: fit 4.19, nominal 4}
\addplot[red, no marks, solid, forget plot] coordinates {(1.00000e-01,2.83906e-05) (1.33333e-02,6.10512e-09)};
\addplot[red, no marks, dashed, forget plot] coordinates {(1.33333e-02,6.37257e-09) (1.00000e-01,2.01632e-05)};
\addplot+[green!50!black, mark=triangle, only marks] table[x=h, y=e] {
h e
2.00000e-01 5.01889e-05
1.00000e-01 2.97366e-06
8.00000e-02 1.17189e-06
6.66667e-02 5.33619e-07
5.00000e-02 1.47846e-07
4.00000e-02 5.32567e-08
3.33333e-02 2.29514e-08
2.85714e-02 1.11458e-08
2.66667e-02 8.02949e-09
2.50000e-02 5.91484e-09
2.22222e-02 3.36615e-09
2.00000e-02 2.03788e-09
};
\addlegendentry{$N=4$: fit 4.43, nominal 5}
\addplot[green!50!black, no marks, solid, forget plot] coordinates {(2.00000e-01,6.17565e-05) (2.00000e-02,2.29314e-09)};
\addplot[green!50!black, no marks, dashed, forget plot] coordinates {(2.00000e-02,2.03788e-09) (2.00000e-01,2.03788e-04)};
\addplot+[violet, mark=diamond, only marks] table[x=h, y=e] {
h e
2.00000e-01 8.44806e-06
1.33333e-01 7.20910e-07
1.00000e-01 1.18992e-07
8.00000e-02 2.97108e-08
6.66667e-02 9.37429e-09
5.71429e-02 3.45937e-09
5.00000e-02 1.43143e-09
4.44444e-02 7.34430e-10
4.00000e-02 3.29597e-10
3.63636e-02 2.56851e-10
3.33333e-02 1.60397e-10
};
\addlegendentry{$N=5$: fit 6.18, nominal 6}
\addplot[violet, no marks, solid, forget plot] coordinates {(2.00000e-01,8.39026e-06) (3.33333e-02,1.31021e-10)};
\addplot[violet, no marks, dashed, forget plot] coordinates {(3.33333e-02,1.60397e-10) (2.00000e-01,7.48350e-06)};
\end{loglogaxis}
\end{tikzpicture}

%% file: ramp_conv_Linf.tex
\begin{tikzpicture}
\begin{loglogaxis}[scale only axis, width=106pt, height=106pt, xmin=0.01, xmax=0.25, ymin=1e-10, ymax=1e-2,
  ytick={1e-10,1e-8,1e-6,1e-4,1e-2}, tick label style={font=\scriptsize}, label style={font=\footnotesize}, xlabel={$h=2/I_{\max}$}, ylabel={$L^\infty$ error of $\rho$},
  xtick={0.01,0.02,0.05,0.1,0.2}, xticklabels={0.01,0.02,0.05,0.1,0.2}, grid=major, grid style={gray!25},
  legend columns=1, legend cell align=left, legend style={font=\footnotesize, at={(0.5,-0.31)}, anchor=north, draw=none, inner xsep=0pt},
  every axis plot/.append style={line width=0.8pt}, mark size=2pt]
\addplot+[blue, mark=o, only marks] table[x=h, y=e] {
h e
1.00000e-01 1.42783e-03
6.66667e-02 5.82484e-04
5.00000e-02 2.71576e-04
4.00000e-02 2.19283e-04
3.33333e-02 1.30150e-04
2.85714e-02 8.78144e-05
2.50000e-02 8.18738e-05
2.22222e-02 5.57058e-05
2.00000e-02 3.81332e-05
1.81818e-02 4.10921e-05
1.66667e-02 2.83295e-05
1.53846e-02 2.14251e-05
1.42857e-02 1.83809e-05
1.33333e-02 1.51657e-05
};
\addlegendentry{$N=2$: fit 2.21, nominal 3}
\addplot[blue, no marks, solid, forget plot] coordinates {(1.00000e-01,1.45907e-03) (1.33333e-02,1.69981e-05)};
\addplot[blue, no marks, dashed, forget plot] coordinates {(1.33333e-02,1.51657e-05) (1.00000e-01,6.39805e-03)};
\addplot+[red, mark=square, only marks] table[x=h, y=e] {
h e
1.00000e-01 2.20886e-04
6.66667e-02 2.92366e-05
5.00000e-02 1.17028e-05
4.00000e-02 7.11564e-06
3.33333e-02 2.81324e-06
2.85714e-02 1.82316e-06
2.50000e-02 6.71990e-07
2.22222e-02 6.45641e-07
2.00000e-02 5.22994e-07
1.81818e-02 2.79874e-07
1.66667e-02 1.93727e-07
1.53846e-02 1.16612e-07
1.42857e-02 1.16419e-07
1.33333e-02 8.73084e-08
};
\addlegendentry{$N=3$: fit 3.82, nominal 4}
\addplot[red, no marks, solid, forget plot] coordinates {(1.00000e-01,1.85632e-04) (1.33333e-02,8.49491e-08)};
\addplot[red, no marks, dashed, forget plot] coordinates {(1.33333e-02,8.73084e-08) (1.00000e-01,2.76249e-04)};
\addplot+[green!50!black, mark=triangle, only marks] table[x=h, y=e] {
h e
2.00000e-01 3.27498e-04
1.00000e-01 1.58226e-05
8.00000e-02 6.12509e-06
6.66667e-02 2.73133e-06
5.00000e-02 8.48083e-07
4.00000e-02 4.35615e-07
3.33333e-02 1.68359e-07
2.85714e-02 9.92656e-08
2.66667e-02 7.57120e-08
2.50000e-02 6.38292e-08
2.22222e-02 3.52768e-08
2.00000e-02 3.23981e-08
};
\addlegendentry{$N=4$: fit 4.04, nominal 5}
\addplot[green!50!black, no marks, solid, forget plot] coordinates {(2.00000e-01,2.66869e-04) (2.00000e-02,2.43354e-08)};
\addplot[green!50!black, no marks, dashed, forget plot] coordinates {(2.00000e-02,3.23981e-08) (2.00000e-01,3.23981e-03)};
\addplot+[violet, mark=diamond, only marks] table[x=h, y=e] {
h e
2.00000e-01 3.33838e-05
1.33333e-01 3.97697e-06
1.00000e-01 1.11436e-06
8.00000e-02 3.00496e-07
6.66667e-02 7.40031e-08
5.71429e-02 3.70102e-08
5.00000e-02 3.49419e-08
4.44444e-02 1.15518e-08
4.00000e-02 1.75843e-08
3.63636e-02 7.26650e-09
3.33333e-02 1.15573e-08
};
\addlegendentry{$N=5$: fit 4.75, nominal 6}
\addplot[violet, no marks, solid, forget plot] coordinates {(2.00000e-01,2.48780e-05) (3.33333e-02,5.02336e-09)};
\addplot[violet, no marks, dashed, forget plot] coordinates {(3.33333e-02,1.15573e-09) (2.00000e-01,5.39218e-05)};
\end{loglogaxis}
\end{tikzpicture}

%% file: vortex.tex
\begin{tabular}{r c ccc ccc c}
\toprule
$I_{\max}$ & $\alpha_{\min}$ & L1 & L2 & L$\infty$ & $p_{\mathrm{L1}}$ & $p_{\mathrm{L2}}$ & $p_{\mathrm{L\infty}}$ & steps \\
\midrule
\multicolumn{9}{l}{\textit{$N=2$ (expected order 3)}} \\
25 & 1.77e-01 & 1.977e-04 & 7.332e-05 & 1.201e-04 & -- & -- & -- & 28 \\
50 & 9.59e-03 & 2.881e-05 & 1.109e-05 & 1.658e-05 & 2.78 & 2.72 & 2.86 & 54 \\
75 & 7.31e-02 & 9.123e-06 & 3.529e-06 & 5.772e-06 & 2.84 & 2.83 & 2.60 & 81 \\
100 & 1.07e-03 & 3.994e-06 & 1.540e-06 & 5.352e-06 & 2.87 & 2.88 & 0.26 & 107 \\
150 & 1.91e-03 & 1.232e-06 & 4.726e-07 & 1.064e-06 & 2.90 & 2.91 & 3.99 & 160 \\
\midrule
\multicolumn{9}{l}{\textit{$N=3$ (expected order 4)}} \\
25 & 1.77e-01 & 4.924e-06 & 4.709e-06 & 4.096e-05 & -- & -- & -- & 39 \\
50 & 9.59e-03 & 2.847e-07 & 3.593e-07 & 1.070e-05 & 4.11 & 3.71 & 1.94 & 76 \\
75 & 7.31e-02 & 6.248e-08 & 9.681e-08 & 2.922e-06 & 3.74 & 3.23 & 3.20 & 113 \\
100 & 1.07e-03 & 1.824e-08 & 2.951e-08 & 1.587e-06 & 4.28 & 4.13 & 2.12 & 150 \\
150 & 1.91e-03 & 3.616e-09 & 8.484e-09 & 4.422e-07 & 3.99 & 3.07 & 3.15 & 224 \\
\midrule
\multicolumn{9}{l}{\textit{$N=4$ (expected order 5)}} \\
10 & 2.83e-02 & 1.163e-05 & 5.317e-06 & 3.764e-05 & -- & -- & -- & 20 \\
25 & 1.77e-01 & 1.033e-07 & 3.961e-08 & 1.274e-07 & 5.16 & 5.35 & 6.21 & 50 \\
50 & 9.59e-03 & 4.151e-09 & 1.697e-09 & 3.241e-09 & 4.64 & 4.54 & 5.30 & 97 \\
75 & 7.31e-02 & 6.412e-10 & 2.614e-10 & 6.251e-10 & 4.61 & 4.61 & 4.06 & 145 \\
100 & 1.07e-03 & 1.735e-10 & 6.919e-11 & 1.210e-10 & 4.54 & 4.62 & 5.71 & 193 \\
\midrule
\multicolumn{9}{l}{\textit{$N=5$ (expected order 6)}} \\
10 & 2.83e-02 & 1.219e-06 & 7.568e-07 & 5.129e-06 & -- & -- & -- & 25 \\
20 & 1.13e-01 & 6.649e-09 & 3.060e-09 & 1.472e-08 & 7.52 & 7.95 & 8.44 & 49 \\
25 & 1.77e-01 & 1.715e-09 & 7.725e-10 & 4.709e-09 & 6.07 & 6.17 & 5.11 & 60 \\
30 & 2.55e-01 & 5.218e-10 & 2.320e-10 & 1.981e-09 & 6.53 & 6.60 & 4.75 & 72 \\
50 & 9.59e-03 & 2.515e-11 & 1.941e-11 & 2.335e-09 & 5.94 & 4.86 & -0.32 & 119 \\
\bottomrule
\end{tabular}

%% file: ringleb.tex
\begin{tabular}{r c ccc ccc c}
\toprule
$h$ & $\alpha_{\min}$ & L1 & L2 & L$\infty$ & $p_{\mathrm{L1}}$ & $p_{\mathrm{L2}}$ & $p_{\mathrm{L\infty}}$ & steps \\
\midrule
\multicolumn{9}{l}{\textit{$N=2$ (expected order 3)}} \\
0.2 & 3.049e-04 & 8.740e-05 & 1.061e-04 & 1.041e-03 & -- & -- & -- & 2 \\
0.1 & 2.172e-04 & 1.233e-05 & 1.401e-05 & 2.396e-04 & 2.83 & 2.92 & 2.12 & 4 \\
0.05 & 1.114e-04 & 1.954e-06 & 1.917e-06 & 3.807e-05 & 2.66 & 2.87 & 2.65 & 8 \\
0.025 & 1.487e-05 & 2.983e-07 & 2.696e-07 & 5.564e-06 & 2.71 & 2.83 & 2.77 & 16 \\
\midrule
\multicolumn{9}{l}{\textit{$N=3$ (expected order 4)}} \\
0.2 & 3.049e-04 & 7.706e-06 & 1.286e-05 & 2.387e-04 & -- & -- & -- & 3 \\
0.1 & 2.172e-04 & 4.386e-07 & 9.424e-07 & 2.127e-05 & 4.14 & 3.77 & 3.49 & 6 \\
0.05 & 1.114e-04 & 2.209e-08 & 5.506e-08 & 2.281e-06 & 4.31 & 4.10 & 3.22 & 12 \\
0.025 & 1.487e-05 & 9.955e-10 & 2.455e-09 & 1.491e-07 & 4.47 & 4.49 & 3.93 & 23 \\
\midrule
\multicolumn{9}{l}{\textit{$N=4$ (expected order 5)}} \\
0.3 & 1.635e-03 & 4.315e-06 & 7.507e-06 & 1.273e-04 & -- & -- & -- & 3 \\
0.2 & 3.049e-04 & 9.902e-07 & 1.878e-06 & 3.732e-05 & 3.63 & 3.42 & 3.03 & 4 \\
0.1 & 2.172e-04 & 3.136e-08 & 7.393e-08 & 2.186e-06 & 4.98 & 4.67 & 4.09 & 8 \\
0.05 & 1.114e-04 & 6.405e-10 & 1.590e-09 & 5.559e-08 & 5.61 & 5.54 & 5.30 & 15 \\
\midrule
\multicolumn{9}{l}{\textit{$N=5$ (expected order 6)}} \\
0.4 & 1.297e-03 & 5.414e-06 & 7.810e-06 & 8.487e-05 & -- & -- & -- & 3 \\
0.3 & 1.635e-03 & 7.779e-07 & 1.496e-06 & 1.877e-05 & 6.74 & 5.74 & 5.24 & 3 \\
0.2 & 3.049e-04 & 1.355e-07 & 2.807e-07 & 4.164e-06 & 4.31 & 4.13 & 3.71 & 5 \\
0.1 & 2.172e-04 & 1.506e-09 & 4.683e-09 & 1.291e-07 & 6.49 & 5.91 & 5.01 & 9 \\
\bottomrule
\end{tabular}

%% file: quarterannulus.tex
\begin{tabular}{r c ccc ccc c}
\toprule
$I_{\max}$ & $\alpha_{\min}$ & L1 & L2 & L$\infty$ & $p_{\mathrm{L1}}$ & $p_{\mathrm{L2}}$ & $p_{\mathrm{L\infty}}$ & steps \\
\midrule
\multicolumn{9}{l}{\textit{$N=2$ (expected order 3)}} \\
40 & 2.31e-03 & 4.962e-06 & 7.303e-06 & 1.011e-04 & -- & -- & -- & 37 \\
60 & 2.53e-04 & 1.785e-06 & 2.593e-06 & 3.219e-05 & 2.52 & 2.55 & 2.82 & 55 \\
80 & 3.72e-06 & 8.568e-07 & 1.233e-06 & 1.377e-05 & 2.55 & 2.58 & 2.95 & 73 \\
100 & 2.65e-05 & 4.857e-07 & 6.939e-07 & 6.794e-06 & 2.54 & 2.58 & 3.16 & 91 \\
150 & 1.25e-05 & 1.700e-07 & 2.400e-07 & 2.764e-06 & 2.59 & 2.62 & 2.22 & 136 \\
\midrule
\multicolumn{9}{l}{\textit{$N=3$ (expected order 4)}} \\
20 & 2.63e-03 & 5.581e-07 & 1.324e-06 & 2.309e-05 & -- & -- & -- & 26 \\
30 & 1.15e-03 & 9.579e-08 & 2.266e-07 & 5.691e-06 & 4.35 & 4.35 & 3.45 & 38 \\
40 & 2.31e-03 & 2.864e-08 & 7.287e-08 & 2.127e-06 & 4.20 & 3.94 & 3.42 & 51 \\
50 & 3.64e-05 & 1.132e-08 & 3.063e-08 & 1.065e-06 & 4.16 & 3.88 & 3.10 & 64 \\
60 & 2.53e-04 & 4.977e-09 & 1.381e-08 & 5.365e-07 & 4.51 & 4.37 & 3.76 & 76 \\
\midrule
\multicolumn{9}{l}{\textit{$N=4$ (expected order 5)}} \\
15 & 9.65e-03 & 1.046e-07 & 2.344e-07 & 5.910e-06 & -- & -- & -- & 25 \\
20 & 2.63e-03 & 2.597e-08 & 5.989e-08 & 1.591e-06 & 4.84 & 4.74 & 4.56 & 33 \\
25 & 6.93e-05 & 8.679e-09 & 2.109e-08 & 7.738e-07 & 4.91 & 4.68 & 3.23 & 41 \\
30 & 1.15e-03 & 2.365e-09 & 4.545e-09 & 1.429e-07 & 7.13 & 8.42 & 9.26 & 49 \\
35 & 1.29e-03 & 1.112e-09 & 2.261e-09 & 8.976e-08 & 4.89 & 4.53 & 3.02 & 57 \\
40 & 2.31e-03 & 6.089e-10 & 1.281e-09 & 2.797e-08 & 4.51 & 4.26 & 8.73 & 66 \\
\midrule
\multicolumn{9}{l}{\textit{$N=5$ (expected order 6)}} \\
10 & 6.16e-03 & 3.790e-08 & 7.721e-08 & 2.202e-06 & -- & -- & -- & 21 \\
15 & 9.65e-03 & 1.601e-09 & 5.369e-09 & 1.613e-07 & 7.80 & 6.57 & 6.45 & 30 \\
20 & 2.63e-03 & 2.373e-10 & 9.027e-10 & 3.402e-08 & 6.64 & 6.20 & 5.41 & 40 \\
25 & 6.93e-05 & 8.009e-11 & 2.548e-10 & 1.801e-08 & 4.87 & 5.67 & 2.85 & 50 \\
30 & 1.15e-03 & 2.723e-11 & 8.817e-11 & 2.891e-08 & 5.92 & 5.82 & -2.60 & 60 \\
\bottomrule
\end{tabular}

%% file: potentialflow.tex
\begin{tabular}{r c cccc cccc r}
\toprule
$I_{\max}$ & $\alpha_{\min}$ & L1($\rho$) & L1($u$) & L1($v$) & L1($p$) & $p_{\rho}$ & $p_{u}$ & $p_{v}$ & $p_{p}$ & steps \\
\midrule
25 & 1.768e-01 & 1.717e-05 & 2.563e-05 & 2.373e-05 & 1.719e-05 & -- & -- & -- & -- & 16 \\
50 & 9.593e-03 & 1.442e-06 & 2.999e-06 & 2.820e-06 & 1.528e-06 & 3.57 & 3.10 & 3.07 & 3.49 & 32 \\
75 & 7.311e-02 & 4.119e-07 & 8.416e-07 & 7.903e-07 & 5.770e-07 & 3.09 & 3.13 & 3.14 & 2.40 & 48 \\
100 & 1.071e-03 & 1.591e-07 & 3.489e-07 & 3.332e-07 & 3.911e-07 & 3.31 & 3.06 & 3.00 & 1.35 & 64 \\
\bottomrule
\end{tabular}

%% file: cpu.tex
\begin{tabular}{c l r rr r rr r}
\toprule
$N$ & mesh & steps & explicit cells & implicit cells & explicit [\%] & CPU mixed [s] & CPU impl.\ [s] & ratio \\
\midrule
\multicolumn{9}{l}{\textit{Ringleb flow}} \\
\midrule
2 & $h=0.2$   & 20 & 52   & 134  & 28 & 10.8 & 11.3  & 1.0 \\
2 & $h=0.1$   & 20 & 371  & 279  & 57 & 10.1 & 12.4  & 1.2 \\
2 & $h=0.05$  & 20 & 1844 & 572  & 76 & 11.6 & 19.4  & 1.7 \\
2 & $h=0.025$ & 20 & 8164 & 1154 & 88 & 21.7 & 54.4  & 2.5 \\
3 & $h=0.2$   & 20 & 52   & 134  & 28 & 24.6 & 25.8  & 1.1 \\
3 & $h=0.1$   & 20 & 371  & 279  & 57 & 29.7 & 39.6  & 1.3 \\
3 & $h=0.05$  & 20 & 1844 & 572  & 76 & 41.7 & 89.9  & 2.2 \\
3 & $h=0.025$ & 20 & 8164 & 1154 & 88 & 45.7 & 221.0 & 4.8 \\
\midrule
\multicolumn{9}{l}{\textit{Acoustic pulse scattered by five cylinders}} \\
\midrule
3 & $106\times106$ & 1113 & 8918 & 758 & 92 & 1065.0 & 5533.7 & 5.2 \\
\midrule
\multicolumn{9}{l}{\textit{Sod shock tube with walls}} \\
\midrule
2 & $100\times10$ & 1000 & 600 & 400 & 60 & 418.1 & 846.6 & 2.0 \\
\bottomrule
\end{tabular}

%% file: dod.tex
\begin{tabular}{r ccc ccc rrr}
\toprule
 & \multicolumn{3}{c}{$L^1$ error} & \multicolumn{3}{c}{$L^\infty$ error} & \multicolumn{3}{c}{CPU [s]} \\
\cmidrule(lr){2-4}\cmidrule(lr){5-7}\cmidrule(lr){8-10}
$I_{\max}$ & DoD & mixed & impl. & DoD & mixed & impl. & DoD & mixed & impl. \\
\midrule
\multicolumn{10}{l}{\textit{$N=1$ (expected order 2)}} \\
100 & 2.338e-04 & 1.995e-04 & 2.370e-04 & 5.819e-04 & 1.095e-03 & 1.095e-03 & 0.34 & 0.64 & 1.79 \\
150 & 1.067e-04 & 9.032e-05 & 1.078e-04 & 2.688e-04 & 5.301e-04 & 5.306e-04 & 1.09 & 1.57 & 6.31 \\
200 & 6.043e-05 & 5.102e-05 & 6.103e-05 & 1.564e-04 & 3.078e-04 & 3.078e-04 & 2.26 & 4.07 & 11.80 \\
250 & 3.876e-05 & 3.264e-05 & 3.915e-05 & 1.162e-04 & 2.027e-04 & 2.026e-04 & 4.52 & 7.40 & 21.54 \\
300 & 2.694e-05 & 2.258e-05 & 2.721e-05 & 8.713e-05 & 1.508e-04 & 1.509e-04 & 8.26 & 12.54 & 37.21 \\
\midrule
\multicolumn{10}{l}{\textit{$N=2$ (expected order 3)}} \\
40 & 7.701e-05 & 6.922e-05 & 7.818e-05 & 1.883e-04 & 2.401e-04 & 2.402e-04 & 0.10 & 0.27 & 0.59 \\
60 & 2.570e-05 & 2.234e-05 & 2.590e-05 & 7.972e-05 & 1.090e-04 & 1.090e-04 & 0.31 & 0.78 & 2.02 \\
80 & 1.064e-05 & 9.103e-06 & 1.068e-05 & 3.057e-05 & 5.921e-05 & 5.920e-05 & 0.65 & 1.53 & 4.22 \\
100 & 5.248e-06 & 4.466e-06 & 5.262e-06 & 1.397e-05 & 2.375e-05 & 2.375e-05 & 1.28 & 3.12 & 8.92 \\
150 & 1.403e-06 & 1.151e-06 & 1.405e-06 & 4.608e-06 & 8.481e-06 & 8.482e-06 & 3.94 & 8.97 & 26.73 \\
\midrule
\multicolumn{10}{l}{\textit{$N=3$ (expected order 4)}} \\
40 & 2.859e-06 & 2.785e-06 & 2.963e-06 & 8.468e-06 & 1.538e-05 & 1.538e-05 & 0.41 & 0.96 & 2.44 \\
60 & 6.399e-07 & 6.139e-07 & 6.573e-07 & 1.780e-06 & 3.821e-06 & 3.821e-06 & 1.25 & 2.82 & 8.07 \\
80 & 2.085e-07 & 1.924e-07 & 2.128e-07 & 5.608e-07 & 1.166e-06 & 1.166e-06 & 2.93 & 6.23 & 18.46 \\
100 & 8.193e-08 & 7.268e-08 & 8.350e-08 & 2.775e-07 & 6.493e-07 & 6.490e-07 & 5.20 & 10.63 & 30.52 \\
150 & 1.569e-08 & 1.443e-08 & 1.606e-08 & 5.821e-08 & 1.383e-07 & 1.383e-07 & 16.59 & 32.58 & 109.14 \\
\bottomrule
\end{tabular}